\documentclass[oneside,french,english,12pt]{amsart}
\usepackage[latin9]{inputenc}
\usepackage{float}
\usepackage{mathtools}
\usepackage{amstext}
\usepackage{amsthm}
\usepackage{amssymb}
\usepackage[backref=page,colorlinks,citecolor=blue,bookmarks=true]{hyperref}\hypersetup{linkcolor=blue,  citecolor=blue, urlcolor=blue}

\DeclareRobustCommand{\subtitle}[1]{\\#1}

\makeatletter

\providecommand{\tabularnewline}{\\}
\floatstyle{ruled}
\newfloat{algorithm}{tbp}{loa}
\providecommand{\algorithmname}{Algorithm}
\floatname{algorithm}{\protect\algorithmname}

\numberwithin{equation}{section}
\numberwithin{figure}{section}
\theoremstyle{plain}
\newtheorem{thm}{\protect\theoremname}[section]
\newtheorem*{thm*}{\protect\theoremname}
\theoremstyle{definition}
\newtheorem{problem}[thm]{\protect\problemname}
\newtheorem*{problem*}{Problem}
\theoremstyle{remark}
\newtheorem*{rem*}{\protect\remarkname}
\theoremstyle{remark}
\newtheorem{rem}[thm]{\protect\remarkname}
\newenvironment{lyxcode}
	{\par\begin{list}{}{
		\setlength{\rightmargin}{\leftmargin}
		\setlength{\listparindent}{0pt}
		\raggedright
		\setlength{\itemsep}{0pt}
		\setlength{\parsep}{0pt}
		\normalfont\ttfamily}%
	 \item[]}
	{\end{list}}
\theoremstyle{definition}
\newtheorem{defn}[thm]{\protect\definitionname}
\theoremstyle{plain}
\newtheorem{prop}[thm]{\protect\propositionname}
\theoremstyle{plain}
\newtheorem{fact}[thm]{\protect\factname}
\theoremstyle{plain}
\newtheorem{lem}[thm]{\protect\lemmaname}

\usepackage{amsrefs}
\usepackage{algorithmic}
\usepackage{fullpage}
\usepackage{microtype}
\usepackage{tikz}
\usepackage{changepage}

\floatname{algorithm}{Algorithm}

\setlength{\footskip}{20pt}

\let\originalleft\left
\let\originalright\right
\renewcommand{\left}{\mathopen{}\mathclose\bgroup\originalleft}
\renewcommand{\right}{\aftergroup\egroup\originalright}

\date{}

\makeatother

\usepackage{babel}
\makeatletter
\addto\extrasfrench{%
   \providecommand{\fg}{\ifdim\lastskip>\z@\unskip\fi~\frqq}%
}

\makeatother
\addto\captionsenglish{\renewcommand{\definitionname}{Definition}}
\addto\captionsenglish{\renewcommand{\factname}{Fact}}
\addto\captionsenglish{\renewcommand{\lemmaname}{Lemma}}
\addto\captionsenglish{\renewcommand{\problemname}{Problem}}
\addto\captionsenglish{\renewcommand{\propositionname}{Proposition}}
\addto\captionsenglish{\renewcommand{\remarkname}{Remark}}
\addto\captionsenglish{\renewcommand{\theoremname}{Theorem}}
\addto\captionsfrench{\renewcommand{\algorithmname}{Algorithme}}
\addto\captionsfrench{\renewcommand{\definitionname}{Définition}}
\addto\captionsfrench{\renewcommand{\factname}{Fait}}
\addto\captionsfrench{\renewcommand{\lemmaname}{Lemme}}
\addto\captionsfrench{\renewcommand{\problemname}{Problème}}
\addto\captionsfrench{\renewcommand{\propositionname}{Proposition}}
\addto\captionsfrench{\renewcommand{\remarkname}{Remarque}}
\addto\captionsfrench{\renewcommand{\theoremname}{Théorème}}
\providecommand{\definitionname}{Definition}
\providecommand{\factname}{Fact}
\providecommand{\lemmaname}{Lemma}
\providecommand{\problemname}{Problem}
\providecommand{\propositionname}{Proposition}
\providecommand{\remarkname}{Remark}
\providecommand{\theoremname}{Theorem}

\begin{filecontents}{unistab.bib}
	@article {ArzhantsevaPaunescu,
		AUTHOR = {Arzhantseva, Goulnara and P\u{a}unescu, Liviu},
		TITLE = {Almost commuting permutations are near commuting permutations},
		JOURNAL = {J. Funct. Anal.},
		FJOURNAL = {Journal of Functional Analysis},
		VOLUME = {269},
		YEAR = {2015},
		NUMBER = {3},
		PAGES = {745--757},
		ISSN = {0022-1236},
		URL = {https://doi.org/10.1016/j.jfa.2015.02.013},
	}
	@article {BeckerLubotzky,
		AUTHOR = {Becker, Oren and Lubotzky, Alexander},
		TITLE = {Group stability and {P}roperty ({T})},
		JOURNAL = {J. Funct. Anal.},
		FJOURNAL = {Journal of Functional Analysis},
		VOLUME = {278},
		YEAR = {2020},
		NUMBER = {1},
		ISSN = {0022-1236},
		DOI = {10.1016/j.jfa.2019.108298},
		URL = {https://doi.org/10.1016/j.jfa.2019.108298},
	}
	@article {BLT,
		AUTHOR = {Becker, Oren and Lubotzky, Alexander and Thom, Andreas},
		TITLE = {Stability and invariant random subgroups},
		JOURNAL = {Duke Math. J.},
		FJOURNAL = {Duke Mathematical Journal},
		VOLUME = {168},
		YEAR = {2019},
		NUMBER = {12},
		PAGES = {2207--2234},
		ISSN = {0012-7094},
		DOI = {10.1215/00127094-2019-0024},
		URL = {https://doi.org/10.1215/00127094-2019-0024},
	}
	@article {BeckerMosheiff,
		author = {Becker, Oren and Mosheiff, Jonathan},
		title = "{Abelian Groups Are Polynomially Stable}",
		JOURNAL = {Int. Math. Res. Not. IMRN},
		FJOURNAL = {International Mathematics Research Notices. IMRN},
		pages = "{to appear}",
	}
	@book {BHV,
		AUTHOR = {Bekka, Bachir and de la Harpe, Pierre and Valette, Alain},
		TITLE = {Kazhdan's property ({T})},
		SERIES = {New Mathematical Monographs},
		VOLUME = {11},
		PUBLISHER = {Cambridge University Press, Cambridge},
		YEAR = {2008},
		PAGES = {xiv+472},
		ISBN = {978-0-521-88720-5},
		DOI = {10.1017/CBO9780511542749},
		URL = {https://doi.org/10.1017/CBO9780511542749},
	}
	@inproceedings {BLR,
		AUTHOR = {Blum, Manuel and Luby, Michael and Rubinfeld, Ronitt},
		TITLE = {Self-testing/correcting with applications to numerical
			problems},
		BOOKTITLE = {Proceedings of the 22nd {A}nnual {ACM} {S}ymposium on {T}heory
			of {C}omputing ({B}altimore, {MD}, 1990)},
		JOURNAL = {J. Comput. System Sci.},
		FJOURNAL = {Journal of Computer and System Sciences},
		VOLUME = {47},
		YEAR = {1993},
		NUMBER = {3},
		PAGES = {549--595},
		ISSN = {0022-0000},
		DOI = {10.1016/0022-0000(93)90044-W},
		URL = {https://doi.org/10.1016/0022-0000(93)90044-W},
	}
	@article {BowenBurton,
		AUTHOR = {Bowen, Lewis and Burton, Peter},
		TITLE = {Flexible stability and nonsoficity},
		JOURNAL = {Trans. Amer. Math. Soc.},
		FJOURNAL = {Transactions of the American Mathematical Society},
		PAGES = {to appear},
	}
	@inproceedings {Brooks81,
		AUTHOR = {Brooks, Robert},
		TITLE = {Some remarks on bounded cohomology},
		BOOKTITLE = {Riemann surfaces and related topics: {P}roceedings of the 1978
			{S}tony {B}rook {C}onference ({S}tate {U}niv. {N}ew {Y}ork,
			{S}tony {B}rook, {N}.{Y}., 1978)},
		SERIES = {Ann. of Math. Stud.},
		VOLUME = {97},
		PAGES = {53--63},
		PUBLISHER = {Princeton Univ. Press, Princeton, N.J.},
		YEAR = {1981},
	}
	@article {BOT,
		AUTHOR = {Burger, Marc and Ozawa, Narutaka and Thom, Andreas},
		TITLE = {On {U}lam stability},
		JOURNAL = {Israel J. Math.},
		FJOURNAL = {Israel Journal of Mathematics},
		VOLUME = {193},
		YEAR = {2013},
		NUMBER = {1},
		PAGES = {109--129},
		ISSN = {0021-2172},
		DOI = {10.1007/s11856-012-0050-z},
		URL = {https://doi.org/10.1007/s11856-012-0050-z},
	}
	@article {DGLT,
		AUTHOR = {De Chiffre, Marcus and Glebsky, Lev and Lubotzky, Alexander and Thom, Andreas},
		TITLE = {Stability, cohomology vanishing, and non-approximable groups},
		JOURNAL = {Forum Math. Sigma},
		FJOURNAL = {Forum of Mathematics. Sigma},
		VOLUME = {8},
		YEAR = {2020},
		PAGES = {e18},
		DOI = {10.1017/fms.2020.5},
		URL = {https://doi.org/10.1017/fms.2020.5},
	}
	@article {DOT,
		AUTHOR = {De Chiffre, Marcus and Ozawa, Narutaka and Thom, Andreas},
		TITLE = {Operator algebraic approach to inverse and stability theorems
			for amenable groups},
		JOURNAL = {Mathematika},
		FJOURNAL = {Mathematika. A Journal of Pure and Applied Mathematics},
		VOLUME = {65},
		YEAR = {2019},
		NUMBER = {1},
		PAGES = {98--118},
		ISSN = {0025-5793},
		DOI = {10.1112/s0025579318000335},
		URL = {https://doi.org/10.1112/s0025579318000335},
	}
	@book {DunfordSchwartz,
		AUTHOR = {Dunford, Nelson and Schwartz, Jacob T.},
		TITLE = {Linear operators. {P}art {I}},
		SERIES = {Wiley Classics Library},
		NOTE = {General theory,
			With the assistance of William G. Bade and Robert G. Bartle,
			Reprint of the 1958 original,
			A Wiley-Interscience Publication},
		PUBLISHER = {John Wiley \& Sons, Inc., New York},
		YEAR = {1988},
		PAGES = {xiv+858},
		ISBN = {0-471-60848-3},
	}
	@book {DrutuKapovich,
		AUTHOR = {Dru\c{t}u, Cornelia and Kapovich, Michael},
		TITLE = {Geometric group theory},
		SERIES = {American Mathematical Society Colloquium Publications},
		VOLUME = {63},
		NOTE = {With an appendix by Bogdan Nica},
		PUBLISHER = {American Mathematical Society, Providence, RI},
		YEAR = {2018},
		PAGES = {xx+819},
		ISBN = {978-1-4704-1104-6},
	}
	@article{ESS,
		author = {{Eilers}, S{\o}ren and {Shulman}, Tatiana and {S{\o}rensen}, Adam P. W.},
		title = "{C*-stability of discrete groups}",
		eprint = {https://arxiv.org/abs/1808.06793},
	}
	@article {EpsteinFujiwara,
		AUTHOR = {Epstein, David B. A. and Fujiwara, Koji},
		TITLE = {The second bounded cohomology of word-hyperbolic groups},
		JOURNAL = {Topology},
		FJOURNAL = {Topology. An International Journal of Mathematics},
		VOLUME = {36},
		YEAR = {1997},
		NUMBER = {6},
		PAGES = {1275--1289},
		ISSN = {0040-9383},
		DOI = {10.1016/S0040-9383(96)00046-8},
		URL = {https://doi.org/10.1016/S0040-9383(96)00046-8},
	}
	@article {GowersHatami,
		AUTHOR = {Gowers, William T. and Hatami, Omid},
		TITLE = {Inverse and stability theorems for approximate representations
			of finite groups},
		JOURNAL = {Mat. Sb.},
		FJOURNAL = {Matematicheski\u{\i} Sbornik},
		VOLUME = {208},
		YEAR = {2017},
		NUMBER = {12},
		PAGES = {70--106},
		ISSN = {0368-8666},
		DOI = {10.4213/sm8872},
		URL = {https://doi.org/10.4213/sm8872},
	}
	@article {GlebskyRivera,
		AUTHOR = {Glebsky, Lev and Rivera, Luis Manuel},
		TITLE = {Almost solutions of equations in permutations},
		JOURNAL = {Taiwanese J. Math.},
		FJOURNAL = {Taiwanese Journal of Mathematics},
		VOLUME = {13},
		YEAR = {2009},
		NUMBER = {2A},
		PAGES = {493--500},
		ISSN = {1027-5487},
		DOI = {10.11650/twjm/1500405351},
		URL = {https://doi.org/10.11650/twjm/1500405351},
	}
	@book {Goldreich,
		AUTHOR = {Goldreich, Oded},
		TITLE = {Introduction to property testing},
		PUBLISHER = {Cambridge University Press, Cambridge},
		YEAR = {2017},
		PAGES = {xxv+445},
		ISBN = {978-1-107-19405-2},
		DOI = {10.1017/9781108135252},
		URL = {https://doi.org/10.1017/9781108135252},
	}
	@article {HadwinShulman,
		AUTHOR = {Hadwin, Don and Shulman, Tatiana},
		TITLE = {Stability of group relations under small {H}ilbert-{S}chmidt
			perturbations},
		JOURNAL = {J. Funct. Anal.},
		FJOURNAL = {Journal of Functional Analysis},
		VOLUME = {275},
		YEAR = {2018},
		NUMBER = {4},
		PAGES = {761--792},
		ISSN = {0022-1236},
		DOI = {10.1016/j.jfa.2018.05.006},
		URL = {https://doi.org/10.1016/j.jfa.2018.05.006},
	}
	@article {Hyers,
		AUTHOR = {Hyers, Donald H.},
		TITLE = {On the stability of the linear functional equation},
		JOURNAL = {Proc. Nat. Acad. Sci. U.S.A.},
		FJOURNAL = {Proceedings of the National Academy of Sciences of the United
			States of America},
		VOLUME = {27},
		YEAR = {1941},
		PAGES = {222--224},
		ISSN = {0027-8424},
		DOI = {10.1073/pnas.27.4.222},
		URL = {https://doi.org/10.1073/pnas.27.4.222},
	}
	@article {Ioana,
		author = {Ioana, Adrian},
		title = "{Stability for product groups and property ($\tau$)}",
		eprint = {https://arxiv.org/abs/1909.00282},
	}
	@article {MIPRE,
       author = {{Ji}, Zhengfeng and {Natarajan}, Anand and {Vidick}, 
       	Thomas and {Wright}, John and {Yuen}, Henry},
       title = "{MIP*=RE}",
	   eprint = {https://arxiv.org/abs/2001.04383},
    }
	@article {Kazhdan,
		AUTHOR = {Kazhdan, David},
		TITLE = {On {$\varepsilon $}-representations},
		JOURNAL = {Israel J. Math.},
		FJOURNAL = {Israel Journal of Mathematics},
		VOLUME = {43},
		YEAR = {1982},
		NUMBER = {4},
		PAGES = {315--323},
		ISSN = {0021-2172},
		DOI = {10.1007/BF02761236},
		URL = {https://doi.org/10.1007/BF02761236},
	}
	@article {KunThom,
		author = {Kun, Gabor and Thom, Andreas},
		title = {Inapproximability of actions and {K}azhdan's property ({T})},
		eprint = {https://arxiv.org/abs/1901.03963},
	}
	@article {LLM,
		author = {Lazarovich, Nir and Levit, Arie and Minsky, Yair},
		title = {Surface groups are flexibly stable},
		eprint = {https://arxiv.org/abs/1901.07182},
	}
	@article {LevitLubotzky1,
		author = {Levit, Arie and Lubotzky, Alexander},
		title = {Infinitely presented stable groups and invariant random subgroups of metabelian groups},
		eprint = {https://arxiv.org/abs/1909.11842},
	}
	@article {LevitLubotzky2,
		author = {Levit, Arie and Lubotzky, Alexander},
		title = {Uncountably many permutation stable groups},
		eprint = {https://arxiv.org/abs/1910.11722},
	}
	@article {LubotzkyOppenheim,
		author = {Lubotzky, Alexander and Oppenheim, Izhar},
		title = {Non p-norm approximated Groups},
		JOURNAL = {J. Anal. Math.},
		FJOURNAL = {Journal d'Analyse Math\'{e}matique},
		PAGES = {to appear},
	}
	@inproceedings {NatarajanVidick,
		AUTHOR = {Natarajan, Anand and Vidick, Thomas},
		TITLE = {A quantum linearity test for robustly verifying entanglement},
		BOOKTITLE = {S{TOC}'17---{P}roceedings of the 49th {A}nnual {ACM} {SIGACT}
			{S}ymposium on {T}heory of {C}omputing},
		PAGES = {1003--1015},
		PUBLISHER = {ACM, New York},
		YEAR = {2017},
		DOI = {10.1145/3055399.3055468},
		URL = {https://doi.org/10.1145/3055399.3055468},
	}
	@article {NatarajanVidick2018,
		AUTHOR = {Natarajan, Anand and Vidick, Thomas},
		TITLE = {Low-degree testing for quantum states, and a quantum entangled games {PCP} for {QMA}},
		eprint = {https://arxiv.org/abs/1801.03821},
	}
	@article {Rolli,
		author = {Rolli, Pascal},
		title = {Quasi-morphisms on free groups},
		eprint = {https://arxiv.org/abs/0911.4234},
	}
	@inproceedings {ThomICM,
		AUTHOR = {Thom, Andreas},
		TITLE = {Finitary approximations of groups and their applications},
		BOOKTITLE = {Proceedings of the {I}nternational {C}ongress of
			{M}athematicians---{R}io de {J}aneiro 2018. {V}ol. {III}.
			{I}nvited lectures},
		PAGES = {1779--1799},
		PUBLISHER = {World Sci. Publ., Hackensack, NJ},
		YEAR = {2018},
	}
	@book {Ulam,
		AUTHOR = {Ulam, Stanis{\l}aw M.},
		TITLE = {A collection of mathematical problems},
		SERIES = {Interscience Tracts in Pure and Applied Mathematics, no. 8},
		PUBLISHER = {Interscience Publishers, New York-London},
		YEAR = {1960},
		PAGES = {xiii+150},
	}
	@article {VidickBlog,
		author = {Vidick, Thomas},
		title = {Pauli braiding},
		eprint = "{https://mycqstate.wordpress.com/2017/06/28/pauli-braiding/}",
		year = {2017},
	}
	@article {WitteMorris,
		AUTHOR = {Witte Morris, Dave},
		TITLE = {Bounded generation of {${\rm SL}(n,A)$} (after {D}. {C}arter,
			{G}. {K}eller, and {E}. {P}aige)},
		JOURNAL = {New York J. Math.},
		FJOURNAL = {New York Journal of Mathematics},
		VOLUME = {13},
		YEAR = {2007},
		PAGES = {383--421},
		URL = {http://nyjm.albany.edu:8000/j/2007/13_383.html},
	}
	@article {Zheng,
		author = {Zheng, Tianyi},
		title = {On rigid stabilizers and invariant random subgroups of groups of homeomorphisms},
		eprint = {https://arxiv.org/abs/1901.04428},
	}
\end{filecontents}

\DeclareMathOperator{\SL}{SL}
\DeclareMathOperator{\PSL}{PSL}
\DeclareMathOperator{\Sym}{Sym}
\DeclareMathOperator{\defect}{def}
\let\U\relax
\DeclareMathOperator{\U}{U}
\let\index\relax
\DeclareMathOperator{\index}{ind}
\DeclareMathOperator{\id}{id}
\DeclareMathOperator{\op}{op}
\DeclareMathOperator{\hs}{hs}
\DeclareMathOperator{\strict}{strict}
\DeclareMathOperator{\T}{(T)}
\DeclareMathOperator{\ptau}{(\tau)}
\let\hom\relax
\DeclareMathOperator{\hom}{Hom}
\let\OE\relax
\DeclareMathOperator{\OE}{OL}
\DeclareMathOperator{\trieq}{(\triangle)} 
\DeclareMathOperator{\Lin}{Lin}
\DeclareMathOperator{\tr}{tr}

\newcommand{\eps}{\varepsilon}

\newcommand{\CC}{\mathbb{C}}

\newcommand{\NN}{\mathbb{N}}

\newcommand{\QQ}{\mathbb{Q}}
\newcommand{\RR}{\mathbb{R}}

\newcommand{\ZZ}{\mathbb{Z}}

\newcommand{\calC}{\mathcal{C}}

\newcommand{\calF}{\mathcal{F}}
\newcommand{\calG}{\mathcal{G}}
\newcommand{\calH}{\mathcal{H}}

\newcommand{\calO}{\mathcal{O}}
\newcommand{\calP}{\mathcal{P}}

\newcommand{\calS}{\mathcal{S}}

\newcommand{\dm}{\mathop{dm}}
\newcommand{\intint}{{\int\!\!\int}}
\newcommand{\rightedge}{{\hspace{-0.05em}\longrightarrow\hspace{-0.05em}}}%

\begin{document}
\title[Stability of Approximate group actions: \subtitle{Uniform and probabilistic}]{Stability of Approximate group actions: \subtitle{Uniform and probabilistic}}
\author[O.\ Becker]{Oren Becker}
\address{Oren Becker\hfill\break
	Department of Pure Mathematics and Mathematical Statistics\hfill\break
	Centre for Mathematical Sciences\hfill\break
	Wilberforce Road, Cambridge CB3 0WA, United Kingdom}
\email{oren.becker@gmail.com}

\author[M.\ Chapman]{Michael Chapman}
\address{Michael Chapman\hfill\break
	Einstein Institute of Mathematics\hfill\break
	The Hebrew University, Jerusalem 91904, Israel.}
\email{michael.chapman@mail.huji.ac.il}

\begin{abstract}
We prove that every uniform approximate homomorphism from a discrete
amenable group into a symmetric group is uniformly close to a homomorphism
into a slightly larger symmetric group. That is, amenable groups are
uniformly flexibly stable in permutations. This answers affirmatively
a question of Kun and Thom and a slight variation of a question of
Lubotzky. We also give a negative answer to Lubotzky's original question
by showing that the group $\ZZ$ is not uniformly strictly stable.
Furthermore, we show that $\SL_{r}\left(\ZZ\right)$, $r\geq3$, is
uniformly flexibly stable, but the free group $F_{r}$, $r\geq2$,
is not. We define and investigate a probabilistic variant of uniform
stability that has an application to property testing.
\end{abstract}

\maketitle

\section{Introduction}

In 1940, Ulam asked the following general question, usually referred
to as \emph{Ulam's stability problem} \cite{Ulam,Hyers}: given two
groups $\Gamma$ and $G$ and an approximate homomorphism $f\colon\Gamma\to G$,
is $f$ close to a homomorphism? The answer depends on the groups
$\Gamma$ and $G$ as well as the chosen notions of an approximate
homomorphism and proximity between functions.

The following theorem of Kazhdan tackles a particular instance
of Ulam's problem: is every approximate unitary representation of
a group close in operator norm to a unitary representation?
\begin{thm*}
[Kazhdan 1982, \cite{Kazhdan}] \label{thm:Kazhdan}Let $f\colon\Gamma\rightarrow\U\left(\calH\right)$
be a function from an amenable group $\Gamma$ into the group $\U\left(\calH\right)$
of unitary operators on the Hilbert space $\calH$. Take $\delta<1/200$
such that $\|f\left(\gamma_{1}\gamma_{2}\right)-f\left(\gamma_{1}\right)f\left(\gamma_{2}\right)\|_{\op}\leq\delta$
for all $\gamma_{1},\gamma_{2}\in\Gamma$. Then, there is a group
homomorphism $h\colon\Gamma\rightarrow\U\left(\calH\right)$ such
that $\|h\left(\gamma\right)-f\left(\gamma\right)\|_{\op}\leq2\delta$
for every $\gamma\in\Gamma$.
\end{thm*}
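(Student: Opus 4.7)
The plan is to exploit amenability of $\Gamma$ through an invariant-mean averaging that smooths the bounded defect of $f$ into a true homomorphism. Fix a left-invariant mean $m$ on $\ell^{\infty}(\Gamma)$, which exists by amenability. Using $m$, I would construct the candidate $h$ by averaging the ``almost cocycle'' $x \mapsto f(x)^{-1} f(x\gamma)$, which is uniformly $\delta$-close to the constant $f(\gamma)$. Concretely, define $h(\gamma) \in \calB(\calH)$ via the operator-valued integral
\[
h(\gamma) \;:=\; m_x\bigl(f(x)^{-1} f(x\gamma)\bigr),
\]
interpreted weakly: for $v,w \in \calH$, set $\langle h(\gamma) v, w\rangle := m_x\bigl(\langle f(x)^{-1} f(x\gamma) v, w\rangle\bigr)$, which is well defined since the scalar integrand has absolute value bounded by $\|v\|\|w\|$. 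Then $\|h(\gamma)\|_{\op} \leq 1$, and the proximity estimate is immediate:
\[
\|h(\gamma) - f(\gamma)\|_{\op} \;\leq\; \sup_x \|f(x)^{-1} f(x\gamma) - f(\gamma)\|_{\op} \;=\; \sup_x \|f(x\gamma) - f(x) f(\gamma)\|_{\op} \;\leq\; \delta.
\]

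The heart of the argument is multiplicativity of $h$. Expanding $h(\gamma) h(\eta)$ as an iterated mean $m_x m_y$ (by the usual Fubini-type lemma for uniformly bounded operator-valued integrands) and applying left-invariance of $m$ to substitute $y \mapsto x\gamma y$, one rewrites the integrand in a form whose difference from $f(x)^{-1} f(x\gamma\eta)$ is controlled by the defect $\delta$. A single averaging step yields only approximate multiplicativity, of the form $\|h(\gamma)h(\eta) - h(\gamma\eta)\|_{\op} \leq c\delta$ with $c<1$ under the hypothesis $\delta<1/200$. One then iterates the construction (or equivalently invokes a fixed-point argument, viewing the $f$-near contraction-valued functions as a bounded convex subset of $\calB(\calH)^{\Gamma}$ carrying an affine $\Gamma$-action and using amenability to average to an invariant point). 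In the limit one obtains an exact homomorphism $h_{\infty}$ with $\|h_{\infty}(\gamma) - f(\gamma)\|_{\op} \leq \delta/(1-c) \leq 2\delta$, the sharp constant $2$ arising from a geometric series valid once $c \leq 1/2$.

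Finally, unitarity of $h_{\infty}$ follows at once: $h_{\infty}$ is a contraction-valued group homomorphism, so $h_{\infty}(\gamma) h_{\infty}(\gamma^{-1}) = h_{\infty}(e) = I$ gives a contractive two-sided inverse, forcing $h_{\infty}(\gamma) \in \U(\calH)$. The principal technical obstacle is the multiplicativity analysis: one must track carefully how defects propagate under the double averaging and the subsequent iteration to secure a contraction rate $c < 1$ (in fact $c \leq 1/2$) with explicit constants, and the numerical threshold $\delta < 1/200$ is precisely what the bookkeeping requires in order to both justify the convergent iteration and yield the factor $2$ in the final proximity bound.
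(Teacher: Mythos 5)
The paper does not prove this theorem: it is stated in the introduction with the attribution to Kazhdan's 1982 paper \cite{Kazhdan} purely as background and motivation for the permutation-stability analogue. So there is no in-paper proof to compare against, and your proposal must be assessed on its own terms as a reconstruction of Kazhdan's argument.

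Your high-level strategy --- define $h(\gamma)$ by averaging the near-cocycle $x\mapsto f(x)^{-1}f(x\gamma)$ with an invariant mean, then iterate --- is indeed the right plan and is in the spirit of Kazhdan's proof (and of the later quantitative refinements such as De Chiffre--Ozawa--Thom). The proximity estimate $\|h(\gamma)-f(\gamma)\|_{\op}\leq\delta$ is correct. But the proposal has gaps at exactly the points where the argument is delicate.

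First, the defect-improvement claim is asserted rather than proved, and as stated it hides the key mechanism. The correct statement is that the averaging is \emph{quadratically} contracting: writing $E(a,b)=f(ab)-f(a)f(b)$, one gets
\[
h(\gamma)h(\eta)-h(\gamma\eta)=\bigl(h(\gamma)-f(\gamma)\bigr)\bigl(h(\eta)-f(\eta)\bigr)-G,\qquad \|G\|_{\op}\leq\delta^{2},
\]
so $\|h(\gamma)h(\eta)-h(\gamma\eta)\|_{\op}\leq 2\delta^{2}$. Deriving this identity requires a cancellation that uses \emph{right} invariance of $m$ (to shift $m_{x}(f(x)^{-1}E(x\gamma,\eta))$ into $f(\gamma)(h(\eta)-f(\eta))$ up to $O(\delta^{2})$) in addition to the left invariance you use for the substitution $y\mapsto x\gamma y$. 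You only fix a left-invariant mean; you should pass to a bi-invariant one and say so, otherwise this step fails. Your weaker bound ``$c\delta$ with $c<1$'' is implied by $2\delta^{2}$ once $\delta<1/2$, but since you never compute the constant, you have not actually secured $c\leq 1/2$ and therefore not secured the $2\delta$ bound. Second, the iteration is not as clean as you suggest: $h$ is only a mean of unitaries, hence a contraction, and the subsequent iterates $T_{n+1}(\gamma)=m_{x}(T_{n}(x)^{-1}T_{n}(x\gamma))$ involve $T_{n}(x)^{-1}$, which has norm $>1$. One must track these norms carefully (or unitarize, but naive polar unitarization at each step adds an $O(\delta)$ error and destroys the gain). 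Third, the ``fixed-point'' alternative you sketch does not stand as written: the natural candidate $(\alpha\cdot g)(\gamma)=f(\alpha)^{-1}g(\alpha\gamma)$ fails to be a $\Gamma$-action precisely because $f$ is not a homomorphism. Finally, your unitarity argument at the end assumes $h_{\infty}$ is contraction-valued, which is clear for $T_{1}$ but not for the limit of the iteration; you would need to either bound $\sup_{n}\|T_{n}(\gamma)\|$ carefully, or invoke a Dixmier-style unitarization (average $T_{\infty}(\gamma)^{*}T_{\infty}(\gamma)$ over $\gamma$ to produce an invariant positive operator and conjugate by its square root).
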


The present paper tackles a similar problem: is every approximate
action on a finite set close to an action? That is, we replace the
unitary groups in Kazhdan's theorem by finite symmetric groups. We
refer to this version of Ulam stability as \emph{stability in permutations},
and make use of the normalized Hamming metric on $\Sym\left(n\right)$:
\[
d^{H}\left(\sigma,\tau\right)=\frac{1}{n}\left|\left\{ x\in\left[n\right]\mid\sigma\left(x\right)\ne\tau\left(x\right)\right\} \right|\qquad\forall\sigma,\tau\in\Sym\left(n\right)\,\,\text{,}
\]
where $\left[n\right]=\left\{ 1,\dotsc,n\right\} $. For a group $\Gamma$
and a function $f\colon\Gamma\rightarrow\Sym\left(n\right)$, we define
the \emph{uniform local defect} of $f$ to be
\[
\defect_{\infty}\left(f\right)=\sup\left\{ d^{H}\left(f\left(\gamma_{1}\gamma_{2}\right),f\left(\gamma_{1}\right)f\left(\gamma_{2}\right)\right)\mid\gamma_{1},\gamma_{2}\in\Gamma\right\} \,\,\text{.}
\]

A basic result of Glebsky and Rivera studies stability in
permutations when the domain group is finite.
\begin{thm*}
[Glebsky and Rivera 2009, \cite{GlebskyRivera}]\label{thm:Glebsky-Rivera} Let $\Gamma$ be
a finite group and $f\colon\Gamma\rightarrow\Sym\left(n\right)$ a
function, $n\in\NN$. Then, there is a group homomorphism $h\colon\Gamma\rightarrow\Sym\left(n\right)$
such that $d^{H}\left(h\left(\gamma\right),f\left(\gamma\right)\right)\leq C\defect_{\infty}\left(f\right)$
for every $\gamma\in\Gamma$, where $C$ depends only on the group
$\Gamma$ (but not on $n$).
\end{thm*}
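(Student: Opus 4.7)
The plan is to partition $[n]$ into a large \emph{good} subset $A$ on which $f$ already restricts to a genuine $\Gamma$-action by permutations, together with a small bad complement on which we simply extend by the identity, and to show that the size of the bad complement is controlled by $|\Gamma|^{2}\defect_{\infty}(f)\cdot n$.

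Say $x\in[n]$ is \emph{good} if $f(\gamma_{1}\gamma_{2})(x)=f(\gamma_{1})(f(\gamma_{2})(x))$ for every $\gamma_{1},\gamma_{2}\in\Gamma$. By definition of $\defect_{\infty}(f)$, for each fixed pair $(\gamma_{1},\gamma_{2})$ the number of $x$ violating this equation is at most $\defect_{\infty}(f)\cdot n$, so a union bound over the $|\Gamma|^{2}$ pairs shows that the good set $A$ satisfies $|[n]\setminus A|\leq|\Gamma|^{2}\defect_{\infty}(f)\cdot n$.

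The key claim is that $A$ is invariant under every $f(\gamma)$ and that $\gamma\mapsto f(\gamma)|_{A}$ is a genuine homomorphism $\Gamma\to\Sym(A)$. For invariance, let $x\in A$ and $\gamma\in\Gamma$; then for any $\gamma_{1},\gamma_{2}\in\Gamma$, three applications of the goodness of $x$ give
\[
f(\gamma_{1}\gamma_{2})(f(\gamma)(x))=f(\gamma_{1}\gamma_{2}\gamma)(x)=f(\gamma_{1})(f(\gamma_{2}\gamma)(x))=f(\gamma_{1})(f(\gamma_{2})(f(\gamma)(x))),
\]
so $f(\gamma)(x)$ is good. Multiplicativity $f(\gamma_{1}\gamma_{2})|_{A}=f(\gamma_{1})|_{A}\circ f(\gamma_{2})|_{A}$ is precisely the goodness condition, while taking $\gamma_{1}=\gamma_{2}=e$ yields $f(e)^{2}(x)=f(e)(x)$ on $A$, hence $f(e)|_{A}=\id_{A}$ by bijectivity of the permutation $f(e)$. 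Injectivity of $f(\gamma)|_{A}$ forces it to be a permutation of the finite set $A$.

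Finally, define $h(\gamma)$ to act as $f(\gamma)$ on $A$ and as the identity on $[n]\setminus A$; this is a well-defined permutation of $[n]$ and a homomorphism $\Gamma\to\Sym(n)$ (a direct product of two homomorphisms). Since $h$ and $f$ agree on $A$,
\[
d^{H}(h(\gamma),f(\gamma))\leq\frac{|[n]\setminus A|}{n}\leq|\Gamma|^{2}\defect_{\infty}(f)\qquad\text{for every }\gamma\in\Gamma,
\]
proving the theorem with $C=|\Gamma|^{2}$. The one genuine subtlety is establishing invariance of $A$: it is immediate from a union bound that the good set is large, but without $f$-invariance the orbits of good points could wander into the bad set, and the naive extension by the identity would fail to be a permutation, let alone a homomorphism; the three-step computation above is exactly what rules this out.
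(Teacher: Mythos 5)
Your proof is correct, and since the paper merely cites this result from Glebsky--Rivera without reproducing a proof, there is no internal argument to compare against; your argument is the standard ``good set'' proof for finite groups, which is essentially the one in the cited reference.

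To expand slightly on why every step holds up: the union bound over the $|\Gamma|^{2}$ pairs is valid because $d^{H}(f(\gamma_{1}\gamma_{2}),f(\gamma_{1})f(\gamma_{2}))\le\defect_{\infty}(f)$ means each pair excludes at most $\defect_{\infty}(f)\cdot n$ points. The three-step computation correctly shows $f(\gamma)(A)\subseteq A$, and injectivity of $f(\gamma)$ on the finite set $A$ upgrades this to $f(\gamma)(A)=A$. The verification that $f(e)|_{A}=\id_{A}$ is actually redundant once multiplicativity into the group $\Sym(A)$ is established, but including it does no harm. Finally, since both $h(\gamma)$ and $f(\gamma)$ preserve the partition $A\sqcup([n]\setminus A)$ and agree on $A$, the distance bound $d^{H}(h(\gamma),f(\gamma))\le|[n]\setminus A|/n\le|\Gamma|^{2}\defect_{\infty}(f)$ follows. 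The key subtlety you correctly identified and addressed is the invariance of $A$: without it, restricting $f(\gamma)$ to $A$ would not yield elements of $\Sym(A)$ and the extension-by-identity construction would break down.
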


In Section \ref{subsec:intro-framework}, we recall the terminology
of group theoretic stability and two types of a uniformly
stable group: strict and flexible. The above theorem says that
each finite group is uniformly strictly stable.
In the spirit of Kazhdan's Theorem, it is natural to ask
whether the same is true for infinite amenable
groups. The first test case, as raised by Alex Lubotzky, is the following.
\begin{problem*}
[Lubotzky 2018]\label{prob:lubotzky} Does the theorem of Glebsky and Rivera
hold when the finite group $\Gamma$ is replaced by $\ZZ$?
\end{problem*}

In Section \ref{sec:non-strict} we give a negative answer to Lubotzky's
question: $\ZZ$ is not uniformly strictly stable. In fact, we prove the following stronger result.
\begin{thm}
\label{thm:intro-not-strictly-stable}Let $\Gamma$ be a group
that has a transitive action on $\left[n\right]$.
Then, there is a function $f\colon\Gamma\rightarrow\Sym\left(n-1\right)$
such that $\defect_{\infty}\left(f\right)\leq\frac{2}{n-1}$,
but for every homomorphism $h\colon\Gamma\rightarrow\Sym\left(n-1\right)$
there is $\gamma\in\Gamma$ such that $d^{H}\left(f\left(\gamma\right),h\left(\gamma\right)\right)\geq\frac{1}{2}-\frac{1}{n-1}$.
\end{thm}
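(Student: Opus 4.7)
The plan is to take $f$ as the natural ``restriction'' of the transitive action $\rho\colon\Gamma\to\Sym(n)$ to $[n-1]$. Set $a_{\gamma}:=\rho(\gamma)(n)$ and $b_{\gamma}:=\rho(\gamma)^{-1}(n)$ (both in $[n-1]$ whenever $\rho(\gamma)(n)\neq n$), and define $f(\gamma)\in\Sym(n-1)$ to agree with $\rho(\gamma)$ on $[n-1]\setminus\{b_{\gamma}\}$ and to send $b_{\gamma}\mapsto a_{\gamma}$; when $\rho(\gamma)$ fixes $n$, simply take $f(\gamma)=\rho(\gamma)|_{[n-1]}$. Thus $f(\gamma)$ differs from $\rho(\gamma)$ at the single point $b_{\gamma}$. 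Using the identity $b_{\gamma_{1}\gamma_{2}}=\rho(\gamma_{2})^{-1}(b_{\gamma_{1}})$, a short case analysis shows that $f(\gamma_{1})f(\gamma_{2})$ and $f(\gamma_{1}\gamma_{2})$ can disagree only at the two points $b_{\gamma_{2}}$ and $b_{\gamma_{1}\gamma_{2}}$, which gives $\defect_{\infty}(f)\leq 2/(n-1)$.

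For the lower bound I argue by contradiction: suppose a homomorphism $h\colon\Gamma\to\Sym(n-1)$ satisfies $d^{H}(f(\gamma),h(\gamma))<1/2-1/(n-1)$ for every $\gamma$. Set $N_{\gamma}:=|\{x\in[n-1]\mid\rho(\gamma)(x)=h(\gamma)(x)\}|$. Since $f(\gamma)$ and $\rho(\gamma)$ differ only at $b_{\gamma}$, the agreement $|\{x\colon f(\gamma)(x)=h(\gamma)(x)\}|$ is at most $N_{\gamma}+1$, so the hypothesis forces $N_{\gamma}>(n-1)/2$ uniformly in $\gamma$. I derive a contradiction by averaging $N_{\gamma}$ over the finite image $\bar{\Gamma}:=(\rho\times h)(\Gamma)\leq\Sym(n)\times\Sym(n-1)$ acting diagonally on $[n]\times[n-1]$, with diagonal subset $D:=\{(i,i)\mid i\in[n-1]\}$.

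A Burnside-type double counting yields
\[
\frac{1}{|\bar{\Gamma}|}\sum_{\bar{\gamma}\in\bar{\Gamma}}N_{\gamma}=\sum_{O}\frac{|O\cap D|^{2}}{|O|},
\]
where $O$ ranges over $\bar{\Gamma}$-orbits on $[n]\times[n-1]$. Each orbit $O$ sits in $[n]\times X$ for a unique $h$-orbit $X\subseteq[n-1]$, projects transitively onto both factors, and hence satisfies $|O|\geq\mathrm{lcm}(n,|X|)=n|X|/\gcd(n,|X|)$. Using $\sum_{O\subseteq[n]\times X}|O\cap D|=|X|$ together with $\sum_{O}|O\cap D|^{2}\leq|X|\cdot\max_{O}|O\cap D|\leq|X|^{2}$, the above average is at most $\frac{1}{n}\sum_{X}|X|\gcd(n,|X|)$. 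Since $\gcd(n,m)$ divides both $m$ and $n-m$ for $1\leq m<n$, it is bounded by $\min(m,n-m)$, so $m\gcd(n,m)\leq m\cdot n/2$ with equality only if $m=n/2$. Summing over the orbit partition $\sum_{X}|X|=n-1$ with $|X|\geq1$ gives $\sum_{X}|X|\gcd(n,|X|)<n(n-1)/2$ for $n\geq3$---the equality case would force every $|X|$ to equal $n/2$, incompatible with positive integer parts summing to $n-1$. Therefore $\frac{1}{|\bar{\Gamma}|}\sum_{\bar{\gamma}}N_{\gamma}<(n-1)/2$, so some $\gamma$ has $N_{\gamma}<(n-1)/2$, contradicting the hypothesis. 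The principal obstacle is exactly this orbit-partition estimate, whose strict form is what delivers the $1/2-1/(n-1)$ threshold.
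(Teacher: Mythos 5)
Your construction of $f$ coincides with the paper's $\hat{f}=\Psi_{n}\circ\rho$, and the defect bound uses the same two-bad-points observation, so that part matches. The lower bound is where you take a genuinely different route. The paper passes to the unitary representation $\rho_{h,f}$ on $\hom_{\CC}\left(L^{2}\left(\left[n-1\right]\right),L^{2}\left(\left[n\right]\right)\right)$, uses a convexity argument (Lemma~\ref{lem:nearby-invariant-vector}) to produce a $\Gamma$-morphism $T'$ near $T_{n-1}$, and then invokes the cited rigidity fact (Lemma~\ref{lem:PropertyT-transitive-isolated}, from \cite{BeckerLubotzky}) that no morphism of representations can be within $\tfrac{1}{\sqrt{2}}\|T_{n-1}\|$ of $T_{n-1}$ when $\rho$ is transitive. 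You instead give a self-contained combinatorial proof: pass to the finite group $\bar\Gamma=\left(\rho\times h\right)\left(\Gamma\right)$, compute the average of $N_{\gamma}$ exactly via the Burnside-type identity $\frac{1}{\left|\bar\Gamma\right|}\sum_{\bar\gamma}N_{\gamma}=\sum_{O}\left|O\cap D\right|^{2}/\left|O\right|$, bound each term using $\left|O\right|\geq\mathrm{lcm}\left(n,\left|X\right|\right)$ (both projections are equivariant surjections onto transitive $\bar\Gamma$-sets, so $n$ and $\left|X\right|$ divide $\left|O\right|$), and finish with the elementary inequality $m\gcd\left(n,m\right)\leq mn/2$, whose equality case $m=n/2$ is incompatible with $\sum_{X}\left|X\right|=n-1$ when $n\geq 3$. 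I verified the Burnside identity, the divisibility step, and the $\gcd$ estimate; the chain delivers a strict bound $<\left(n-1\right)/2$ on the average while the hypothesis forces every $N_{\gamma}>\left(n-1\right)/2$, reproducing exactly the constant $\tfrac{1}{2}-\tfrac{1}{n-1}$. The trade-off: the paper's Hilbert-space argument also yields the probabilistic ($d_{1}$) version of the theorem via the same lemma, whereas your averaging takes place over the finite quotient $\bar\Gamma$ rather than the invariant mean on $\Gamma$ and would need reworking for that variant; on the other hand your argument is fully elementary, needs no representation theory, and does not rely on any external result.
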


The theorem implies a negative answer to Lubotzky's question, not
only for $\Gamma=\ZZ$, but for every group that has finite quotients
of unbounded cardinality. Furthermore, by considering the action of
a finite group on itself by left multiplication, Theorem \ref{thm:intro-not-strictly-stable}
implies that in the theorem of Glebsky and Rivera  the dependence
of the constant $C$ on the group $\Gamma$ is essential.

These negative results might be discouraging at first. However, another
stability theorem, proved by Gowers and Hatami \cite{GowersHatami}
and generalized by De Chiffre, Ozawa and Thom \cite{DOT}, leads us
in the right direction. Recall that the normalized Hilbert--Schmidt
norm on $\U\left(n\right)$ is given by $\|A\|_{\hs}=\left(\frac{1}{n}\tr\left(A^{*}A\right)\right)^{1/2}$.
\begin{thm*}
[Gowers and Hatami 2017, \cite{GowersHatami}. De Chiffre, Ozawa and Thom 2019, \cite{DOT}] \label{thm:gowers-hatami} Let
$\Gamma$ be a discrete amenable group and $f\colon\Gamma\rightarrow\U\left(n\right)$
a function. Take $\delta>0$ such that $\|f\left(\gamma_{1}\gamma_{2}\right)-f\left(\gamma_{1}\right)f\left(\gamma_{2}\right)\|_{\hs}\leq\delta$
for all $\gamma_{1},\gamma_{2}\in\Gamma$. Then, there is
a representation $h\colon\Gamma\rightarrow\U\left(N\right)$ and an
isometry $T\colon\CC^{n}\rightarrow\CC^{N}$ such that $\|h\left(\gamma\right)-T^{*}f\left(\gamma\right)T\|_{\hs}\le211\delta$
for every $\gamma\in\Gamma$,
where $n\leq N\leq\left(1+2500\delta^{2}\right)n$.
\end{thm*}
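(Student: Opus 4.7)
The plan is to implement the Gowers--Hatami averaging scheme in the amenable setting, following De Chiffre--Ozawa--Thom. By amenability, fix a left-invariant mean $m$ on $\ell^\infty(\Gamma)$, and work on the Hilbert space $\mathcal{H} = \mathbb{C}^n \otimes \ell^2(\Gamma)$ carrying the representation $\pi = \mathrm{id} \otimes \lambda$, where $\lambda$ denotes the left regular representation of $\Gamma$. The entire strategy is to use $f$ and $m$ to build an approximate intertwiner between $f$ and $\pi$, then round to a true sub-representation of $\pi$ on a subspace whose dimension we can control.

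First I construct an isometry $V \colon \mathbb{C}^n \to \mathcal{H}$ modelled informally on $(Vv)(\gamma) = f(\gamma^{-1})v$, with inner products interpreted via the mean: $\langle Vv, Vw\rangle = \int_\Gamma \langle f(\gamma^{-1})v, f(\gamma^{-1})w\rangle\, dm(\gamma)$. Unitarity of each $f(\gamma)$ makes each integrand equal to $\langle v, w\rangle$, so $V^*V = I_n$ and $V$ is a genuine isometry. The approximate homomorphism hypothesis, combined with left-invariance of $m$ under $\gamma \mapsto \gamma_0\gamma$, yields the key intertwining estimate
\[ \|\pi(\gamma_0) V - V f(\gamma_0)\|_{\hs}^{2} \le \int_\Gamma \|f(\gamma_0 \gamma^{-1}) - f(\gamma_0) f(\gamma^{-1})\|_{\hs}^{2}\, dm(\gamma) \le \delta^{2} \]
for every $\gamma_0 \in \Gamma$.

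Second, I extract a genuine sub-representation. Consider the positive contraction
\[ Q = \int_\Gamma \pi(\gamma) V V^* \pi(\gamma)^*\, dm(\gamma), \]
which is $\Gamma$-equivariant by left-invariance of $m$ and satisfies $\|Q - VV^*\|_{\hs} = O(\delta)$ by the intertwining bound above. Let $P = \mathbf{1}_{[\frac{1}{2},1]}(Q)$ be the spectral projection; since $Q$ commutes with every $\pi(\gamma)$, so does $P$, and its range $\mathcal{K} \subset \mathcal{H}$ is $\Gamma$-invariant, so $h := \pi|_{\mathcal{K}}$ is a true representation. Polar-decomposing $PV$ produces a true isometry $T \colon \mathbb{C}^n \to \mathcal{K}$ with $\|T - V\|_{\hs} = O(\delta)$, and combining this with the intertwining estimate yields $\|h(\gamma) T - T f(\gamma)\|_{\hs} \le 211\delta$ after tracking constants. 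For the dimension bound, $VV^*$ is a rank-$n$ projection and $P$ is a projection dominating the range of $T$, so $\mathrm{tr}(P) - n = \mathrm{tr}(P - TT^*) \le \|P - VV^*\|_{\hs}^{2} \le 2500\delta^{2} n$, giving $N = \dim\mathcal{K} \le (1 + 2500\delta^{2})n$, with $N \ge n$ because $T$ is an isometry.

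The main obstacle is quantitative bookkeeping: the explicit constants $211$ and $2500$ require carefully optimising the threshold $\tfrac12$ in the spectral projection, the polar decomposition estimate $\|T - V\|_{\hs} \le \|V^*V - I\|_{\hs}^{1/2}$, and the triangle-inequality composition of intertwining errors, since each step loses a multiplicative constant. A subtler issue is that $\mathcal{H}$ is infinite-dimensional, so one must verify that $\mathcal{K}$ is actually finite-dimensional; this follows from the finite-trace estimate $\mathrm{tr}(P) \le n + \|P - VV^*\|_{\hs}^{2}$, which in turn depends essentially on amenability providing a genuine invariant mean. Without amenability the averaging step collapses, and indeed bounded quasimorphisms on free groups (\cite{Brooks81,EpsteinFujiwara,Rolli}) show that no analogous statement can hold in general.
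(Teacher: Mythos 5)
First, a point of comparison: the paper does not prove this statement at all. It is quoted in the introduction as a known theorem, attributed to \cite{GowersHatami} and \cite{DOT}, and is used only as motivation for the permutation results; so your proposal can only be measured against those sources, not against an argument in this paper. Your outline is indeed the Gowers--Hatami averaging scheme (approximate intertwiner $V$ into a space carrying a genuine representation, equivariant average $Q$ of $\pi(\gamma)VV^*\pi(\gamma)^*$, spectral projection $P$, polar decomposition of $PV$), which is the correct strategy and works verbatim when $\Gamma$ is finite.

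For infinite amenable $\Gamma$, however, there is a genuine gap exactly where you wave your hands: the vector $\gamma\mapsto f(\gamma^{-1})v$ has constant pointwise norm, so it is not an element of $\CC^{n}\otimes\ell^{2}(\Gamma)$, and ``inner products interpreted via the mean'' is not a cosmetic remark but a change of Hilbert space. One must replace $\CC^{n}\otimes\ell^{2}(\Gamma)$ by the GNS-type space obtained from $\ell^{\infty}(\Gamma,\CC^{n})$ and the invariant mean (equivalently, work in a tracial ultraproduct, which is precisely how \cite{DOT} handle the amenable case), and $\pi$ is then the translation action on that nonseparable space rather than $\id\otimes\lambda$. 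On that space every subsequent quantitative claim needs an actual argument: that the weakly defined $Q$ is trace class with $\tr Q\leq n$ (so that $P=\mathbf{1}_{[1/2,1]}(Q)$ has finite rank at all), that $\|Q-VV^{*}\|_{\hs}$ is controlled by integrating the intertwining estimate (using $\|AB\|_{\hs}\le\|A\|_{\hs}\|B\|_{\op}$ and the duality bound $\|\int g\,\dm\|\le\int\|g\|\,\dm$ for the finitely additive integral), and that the excess rank of $P$ over $n$ is $O(\delta^{2}n)$ --- the inequality $\tr(P)-n\le\|P-VV^{*}\|_{\hs}^{2}$ you assert is not literally correct as written and must be replaced by an eigenvalue-counting comparison of $Q$ with the rank-$n$ projection $VV^{*}$ at the threshold $1/2$. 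Likewise the constants $211$ and $2500$ are simply copied from the statement, not derived; as a sketch of the known proof your proposal identifies the right mechanism, but the infinite-dimensional bookkeeping and the quantitative steps are exactly the content supplied by \cite{GowersHatami} and \cite{DOT} and are missing here.
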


It is also shown in \cite{GowersHatami} and \cite{DOT} that allowing
the increase in dimension from $n$ to $N$ is essential. Similarly,
in the context of functions into $\Sym\left(n\right)$, we shall allow
a controlled increase in the number of points $n$. We refer to this
approach as \emph{flexibility in the number of points. }It was proven
to be effective and necessary in many instances \cite{BowenBurton,BeckerLubotzky,LLM}.
We extend the definition of the normalized Hamming metric to measure
distances between elements of symmetric groups of different cardinalities.
For $n\leq N$, $\sigma\in\Sym\left(n\right)$ and $\tau\in\Sym\left(N\right)$,
define: 
\[
d^{H}\left(\sigma,\tau\right)=d^{H}\left(\tau,\sigma\right)=\frac{1}{N}\left(\left|\left\{ x\in\left[n\right]\mid\sigma\left(x\right)\neq\tau\left(x\right)\right\} \right|+\left(N-n\right)\right)\,\,\text{.}
\]
The \emph{uniform distance} between two functions $f\colon\Gamma\rightarrow\Sym\left(n\right)$
and $h\colon\Gamma\rightarrow\Sym\left(N\right)$ is 
\[
d_{\infty}\left(f,h\right)=d_{\infty}\left(h,f\right)=\sup\left\{ d^{H}\left(f\left(\gamma\right),h\left(\gamma\right)\right)\mid\gamma\in\Gamma\right\} \,\,\text{.}
\]
By Lemma \ref{lem:triangle-inequality}, $d^{H}$ satisfies the triangle
inequality and thus it is a metric on the disjoint union $\coprod_{n=1}^{\infty}\Sym\left(n\right)$.

The following question was asked by Kun and Thom.
\begin{problem*}
[Kun and Thom 2019, {\cite[Remark~4.3]{KunThom}}] \label{prob:kun-thom} Take a function
$f\colon\Gamma\rightarrow\Sym\left(n\right)$, where $\Gamma$ is
a finite group and $n\in\NN$.
Is there a homomorphism $h\colon\Gamma\rightarrow\Sym\left(N\right)$
such that $d_{\infty}\left(h,f\right)\leq\eps$
and $n\leq N\leq\left(1+\eps\right)n$,
where $\eps$ depends only on $\defect_{\infty}\left(f\right)$
and tends to zero as $\defect_{\infty}\left(f\right)$ tends to zero.
\end{problem*}

\begin{rem*}
For $N\geq n$, $\sigma\in\Sym\left(n\right)$ and $\tau\in\Sym\left(N\right)$,
our definition of $d^{H}$ forces $d^{H}\left(\sigma,\tau\right)\geq1-\frac{n}{N}$.
Hence, the condition $d_{\infty}\left(h,f\right)\leq\eps$ in the
above problem implies that $N\leq\frac{n}{1-\eps}\le\left(1+2\eps\right)n$
whenever $\eps\leq1/2$. Thus, the condition $N\leq\left(1+\eps\right)n$
in the problem statement is redundant. We state it solely for the
sake of emphasis.
\end{rem*}
The problem of Kun and Thom is not solved by the theorem of Glebsky and Rivera
because the latter provides $\eps$ that depends on the domain group $\Gamma$
and not only on $\defect_{\infty}\left(f\right)$.
As already mentioned, the dependency on $\Gamma$ is
essential when flexibility in the number of points is not allowed.
Our main result in the present paper is the next theorem. It gives
an affirmative answer to Kun and Thom's problem and to a flexible
variant of Lubotzky's problem. In fact, it only assumes
that $\Gamma$ is amenable (rather than finite or infinite cyclic),
and provides an explicit $\eps$ which is linear in $\defect_{\infty}\left(f\right)$.
\begin{thm}[Amenable groups are uniformly flexibly stable]
\label{thm:intro-amenable-positive}Let $\Gamma$ be a discrete amenable
group and $f\colon\Gamma\rightarrow\Sym\left(n\right)$ a function,
$n\in\NN$. Then, there is
a homomorphism $h\colon\Gamma\rightarrow\Sym\left(N\right)$ such
that $d_{\infty}\left(h,f\right)\leq2039\defect_{\infty}\left(f\right)$
and
$n\leq N\leq\left(1+1218\defect_{\infty}\left(f\right)\right)n$.
\end{thm}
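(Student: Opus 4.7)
The plan is to prove Theorem~\ref{thm:intro-amenable-positive} as the permutation analogue of the Hilbert--Schmidt stability theorem of Gowers--Hatami and De~Chiffre--Ozawa--Thom for unitary representations of amenable groups. Writing $\sigma\in\Sym(n)$ as the permutation matrix $P_{\sigma}\in\U(n)$ gives the identity $\|P_{\sigma}-P_{\tau}\|_{\hs}^{2}=2d^{H}(\sigma,\tau)$, so uniform Hamming defect translates to uniform Hilbert--Schmidt defect and the permissible size increase $N-n=O(\delta n)$ plays the role of the dimension increase in the unitary version.

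Set $\delta=\defect_{\infty}(f)$. Using the left-invariant mean $m$ on the amenable group $\Gamma$ (or, equivalently, an Ornstein--Weiss quasi-tiling of a sufficiently invariant F\o{}lner set), form for each $\gamma\in\Gamma$ a doubly-stochastic matrix $A(\gamma)$ that is entrywise close to $P_{f(\gamma)}$ and, after passing to a slightly enlarged ambient space, is exactly multiplicative in~$\gamma$. The guide is the Kazhdan-style average $A(\gamma)=m_{g}\bigl(P_{f(g)^{-1}f(g\gamma)}\bigr)$: the uniform defect of~$f$ forces $f(g)^{-1}f(g\gamma)$ to coincide with $f(\gamma)$ on a fraction $\geq 1-\delta$ of points for every~$g$, which gives the entrywise closeness; left-invariance of~$m$ makes $A$ approximately multiplicative, and a Kazhdan-style fixed-point/correction argument turns this into exact multiplicativity on the enlarged space.

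The heart of the proof is the conversion of the multiplicative, near-permutation family $\{A(\gamma)\}_{\gamma\in\Gamma}$ into a genuine permutation representation $h\colon\Gamma\to\Sym(N)$. For each $x\in[n]$ we identify an ``almost-orbit'' from the concentration pattern of $A(\gamma)$ at~$x$ as $\gamma$ varies, and use the extra $N-n$ points to fill in collisions and gaps so that each quasi-orbit becomes a genuine $\Gamma$-orbit, with the homomorphism property inherited from the multiplicativity of~$A$. The main obstacle is precisely this coherent rounding: a unitary or doubly-stochastic representation of an amenable group need not be close to a permutation representation at all (consider characters of~$\ZZ$ with irrational argument), so the rounding must exploit the specific proximity of $A(\gamma)$ to the concrete permutation $P_{f(\gamma)}$ and the explicit orbit structure of~$f$, not just the abstract existence of~$A$. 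Tracking the error constants through the F\o{}lner averaging, the quasi-tiling, and the rounding yields the explicit bounds $d_{\infty}(h,f)\leq 2039\delta$ and $n\leq N\leq(1+1218\delta)n$.
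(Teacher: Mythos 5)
Your plan and the paper share the germ of an idea — use the amenable mean to ``vote'' on where $\gamma$ should send each point — but your proposal never actually solves the step you yourself identify as the main obstacle, and the detour through unitary/Hilbert--Schmidt machinery is not how the paper proceeds and does not make the problem easier.

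Concretely, you pass to doubly-stochastic matrices $A(\gamma)$, invoke a Kazhdan/Gowers--Hatami-style correction to make the family multiplicative, and then say the result must be ``rounded'' back to a permutation representation by ``identifying almost-orbits'' and ``filling in collisions and gaps''. That rounding is precisely the crux, and the proposal gives no mechanism for it. The example you yourself raise (irrational characters of $\ZZ$) shows that abstract near-multiplicativity in Hilbert--Schmidt norm cannot be rounded; but once you say the rounding must instead exploit the explicit combinatorics of $f$, you are no longer using the Gowers--Hatami/DOT theorem at all, and nothing in the proposal explains how the combinatorial extraction is to be done coherently across all of $\Gamma$ simultaneously.

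The paper stays entirely in the permutation world and resolves the rounding with a graph-theoretic argument. After first symmetrizing $f$, it assigns to each candidate edge $x\overset{\gamma}{\rightedge}y$ the weight $w(x\overset{\gamma}{\rightedge}y)=m\{t\in\Gamma: f(t)f(t^{-1}\gamma)(x)=y\}$ (majority vote), keeps only high-weight edges and high-degree vertices, and — this is the key technical point your plan is missing — proves via the careful interplay between thresholds $\eps$ and $2\eps$ (Lemma~\ref{lem:good-compositions}) that the resulting subgraph $Z$ is an honest $\Gamma$-groupoid, i.e.\ that composable high-weight edges close to a high-weight edge with \emph{no loss of weight}. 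A connected $\Gamma$-groupoid embeds into an action groupoid $\Gamma/\Gamma_x$ (Proposition~\ref{prop:Groupoid_embedding_action}), which yields the homomorphism $h$; the bounds on $N$ and $d_\infty(h,f)$ come from Markov-inequality estimates on degrees and domains (Lemmas~\ref{lem:X-graph-analysis}--\ref{lem:Z-graph-analysis}). None of this appears in your proposal, so as written it has a genuine gap at exactly the step you flagged as the hard one.
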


Theorem \ref{thm:intro-amenable-positive} has the following useful
corollary (see Theorem \ref{thm:almost-trivial-on-finite-index}):
for a finite-index normal subgroup $\Delta$ of $\Gamma$ and a function
$f\colon\Gamma\rightarrow\Sym\left(n\right)$, there is a homomorphism
$h\colon\Gamma\rightarrow\Sym\left(N\right)$, $N\geq n$, such that
$d_{\infty}\left(h,f\right)\leq C\cdot\left(\defect_{\infty}\left(f\right)+\sup\left\{ d^{H}\left(f\left(\gamma\right),\id\right)\mid\gamma\in\Delta\right\} \right)$,
where $C$ is a universal constant. Notably, the bound on $d_{\infty}\left(h,f\right)$
does not depend on the index $\left[\Gamma\colon\Delta\right]$. In
fact, it suffices to assume that $\Gamma/\Delta$ is a discrete amenable
(rather than finite) group, and thus Theorem \ref{thm:almost-trivial-on-finite-index}
is a strong form of Theorem \ref{thm:intro-amenable-positive}
(up to the constants).

Using Theorems \ref{thm:intro-amenable-positive} and \ref{thm:almost-trivial-on-finite-index},
and following the method of \cite[Section 5]{BOT}, we deduce the
following theorem.
\begin{thm}[$\SL_{r}\left(\calO_{K}\right)$ is uniformly flexibly stable]
\label{thm:intro-SLr}Let $f\colon\SL_{r}\left(\calO_{K}\right)\rightarrow\Sym\left(n\right)$
be a function, where $n\in\NN$, $r\geq3$ and $\calO_{K}$ is the
ring of integers of a number field $K$.
Then there is
a homomorphism $h\colon\SL_{r}\left(\calO_{K}\right)\rightarrow\Sym\left(N\right)$
such that $d_{\infty}\left(h,f\right)\leq C\defect_{\infty}\left(f\right)$
and $n\leq N\leq\left(1+C\defect_{\infty}\left(f\right)\right)n$,
where $C$ depends only on $r$.
\end{thm}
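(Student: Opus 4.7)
The plan is to adapt the method of \cite[Section~5]{BOT} --- which establishes the analogous uniform stability statement for $\SL_{r}(\ZZ)$ in operator norm --- to the permutation setting with the Hamming metric. Three ingredients drive the proof: \emph{(i)} bounded generation of $\SL_{r}(\calO_K)$ for $r\ge 3$ by the elementary root subgroups $U_{ij}=\{e_{ij}(a):a\in\calO_K\}$, $i\ne j$, provided by the Carter--Keller--Paige theorem (see \cite{WitteMorris}), which gives a constant $L=L(r)$ such that every $\gamma\in\SL_{r}(\calO_K)$ is a product of at most $L$ elementary matrices; \emph{(ii)} the Steinberg commutator relations, in particular $[e_{ij}(a),e_{jk}(b)]=e_{ik}(ab)$ for distinct $i,j,k$ and $[e_{ij}(a),e_{kl}(b)]=\id$ for $\{i,j\}\cap\{k,l\}=\emptyset$, together with the fact that each $U_{ij}\cong(\calO_K,+)$ is finitely generated abelian, hence amenable; and \emph{(iii)} the uniform flexible stability of amenable groups, given by Theorem~\ref{thm:intro-amenable-positive} together with its relative strengthening Theorem~\ref{thm:almost-trivial-on-finite-index}.

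Given $f\colon\SL_{r}(\calO_K)\to\Sym(n)$ with $\delta=\defect_{\infty}(f)$, the first step is to apply Theorem~\ref{thm:intro-amenable-positive} to each restriction $f|_{U_{ij}}$, regarded as an approximate homomorphism from the amenable group $(\calO_K,+)$. After enlarging all targets to a common $\Sym(N)$ with $N=(1+O(\delta))n$ and padding $f$ by the identity on the added points, this yields genuine homomorphisms $h_{ij}\colon U_{ij}\to\Sym(N)$ satisfying $d_{\infty}(h_{ij},f|_{U_{ij}})=O(\delta)$. The second step is to promote the approximate Steinberg relations (which hold for $f$ because $\defect_{\infty}(f)=\delta$) into exact Steinberg relations for the $h_{ij}$. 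For each triple of distinct indices $(i,j,k)$, the subgroup $H_{ijk}=\langle U_{ij},U_{jk},U_{ik}\rangle$ sits inside a conjugate of the unipotent radical of a Borel and is therefore nilpotent and amenable; applying Theorem~\ref{thm:intro-amenable-positive} directly to $f|_{H_{ijk}}$ produces a genuine homomorphism on $H_{ijk}$ which is $O(\delta)$-close to each of $h_{ij},h_{jk},h_{ik}$ on the appropriate factor, and a uniqueness-type argument (two homomorphisms of an amenable group both $O(\delta)$-close to the same approximate map must agree up to a small conjugation, after further enlargement) lets one align the $h_{ij}$ so that the Steinberg relations hold exactly. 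The aligned family then extends, via the Steinberg presentation of $\SL_{r}(\calO_K)$ for $r\ge 3$, to a single homomorphism $h\colon\SL_{r}(\calO_K)\to\Sym(N)$.

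The final step exploits the bi-invariance of $d^{H}$ on $\Sym(N)$ --- namely $d^{H}(\sigma_1\sigma_2,\tau_1\tau_2)\le d^{H}(\sigma_1,\tau_1)+d^{H}(\sigma_2,\tau_2)$ --- together with bounded generation: from the estimate $d^{H}(h(e_{ij}(a)),f(e_{ij}(a)))=O(\delta)$ on every elementary matrix and the fact that every group element factors as a product of at most $L=L(r)$ elementary matrices, one obtains $d_{\infty}(h,f)\le L\cdot O(\delta)=O_{r}(\delta)$, with the constant depending only on $r$ through $L(r)$ and the finite combinatorics of the root system. The main obstacle to carry out carefully is the middle gluing step: the corrections used to force one Steinberg relation could, in principle, disturb others and blow up either the enlargement $N/n$ or the uniform distance $d_{\infty}(h,f)$, so one likely must apply amenable stability directly to the full unipotent radical $U^{+}$ (and a conjugate $U^{-}$) in order to encode all root-level relations simultaneously, and then verify that the resulting constants remain linear in $\delta$ and independent of $\calO_K$ and $n$.
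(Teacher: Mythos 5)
Your proposal identifies the correct ingredients (bounded generation via Carter--Keller--Paige, amenability of the unipotent subgroups, and Theorems \ref{thm:intro-amenable-positive} and \ref{thm:almost-trivial-on-finite-index}), but the central mechanism that makes the paper's proof work is missing, and you rightly flag the gluing step as the ``main obstacle'' --- because as written it does not go through.

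The problem with your middle step: after producing homomorphisms $h_{ij}\colon U_{ij}\to\Sym(N)$ close to $f|_{U_{ij}}$, you propose aligning them so the Steinberg relations hold exactly and then extending via the Steinberg presentation. There is no tool in the paper (nor an obvious one) asserting that two homomorphisms of an amenable group into $\Sym(N)$ that are both $O(\delta)$-close to the same approximate homomorphism agree up to a small conjugation after enlargement; and even if one had such a local alignment on each $H_{ijk}$, the corrections for different triples interfere, so one must choose a single coherent family, and the cost of that coherence is precisely what you cannot control. Your fallback --- apply Theorem \ref{thm:intro-amenable-positive} to the full $U^{+}$ and $U^{-}$ rather than the individual root subgroups --- is indeed the correct first step, but it still leaves unanswered how to pass from two separate homomorphisms $g^{+}\colon U^{+}\to\Sym(N_1)$ and $g^{-}\colon U^{-}\to\Sym(N_2)$ to a single homomorphism on $\SL_{r}(\calO_K)$.

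The paper's route never glues at all. Having chosen $N=\max\{N_1,N_2\}$, one observes that for any $i\ne j$, $g^{\pm}(U_{ij}^{N!})=\id$ (every permutation of at most $N$ points has order dividing $N!$), and since $g^{+}$ (resp.\ $g^{-}$) is $O(\delta)$-close to $f$ on $U^{+}$ (resp.\ $U^{-}$), it follows that $f(U_{ij}^{\pm N!})\approx_{O(\delta)}\id$. Since $f$ is an approximate homomorphism, the same holds for all conjugates $\gamma U_{ij}^{\pm N!}\gamma^{-1}$. Taking $S$ to be the set of all such conjugates and $\Delta=\langle S\rangle$, Theorem \ref{thm:bounded-generation} (the bounded-generation statement from \cite{WitteMorris}) gives that $\Delta$ is a finite-index normal subgroup of $\SL_{r}(\calO_K)$ with every element a product of at most $C_r$ elements of $S$, whence $f(\gamma)\approx_{O(C_r\delta)}\id$ for all $\gamma\in\Delta$. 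One then invokes Theorem \ref{thm:almost-trivial-on-finite-index} (applied to $\Delta\lhd\SL_r(\calO_K)$ with $\Gamma/\Delta$ finite, hence amenable) to produce the desired global homomorphism $h$. This ``reduce to near-triviality on a finite-index normal subgroup, then apply the quotient theorem'' step is the piece your sketch never reaches, and it is what replaces both the alignment of Steinberg relations and the final step where you estimate $d_\infty(h,f)$ directly via bounded generation.
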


The next theorem shows that for some groups
not every approximate homomorphism is close to a homomorphism,
even when flexibility in the number of
points is allowed.
\begin{thm}[Nonabelian free groups are not uniformly flexibly stable]
\label{thm: Free_gps_are_not_FUSP-1}Let $\Gamma$ be a group that
surjects onto the free group $F_{2}$ of rank $2$. Then, there is
a sequence of functions $\left(f_{k}\right)_{k=1}^{\infty}$, $f_{k}\colon\Gamma\rightarrow\Sym\left(n_{k}\right)$,
$n_{k}\overset{k\rightarrow\infty}{\longrightarrow}\infty$, such
that $\defect_{\infty}\left(f_{k}\right)\leq\frac{2}{k}$, but $d_{\infty}\left(h_{k},f_{k}\right)\geq1-\frac{5}{k}$
for every homomorphism $h_{k}\colon\Gamma\rightarrow\Sym\left(N_{k}\right)$
for all $N_{k}\geq n_{k}$.
\end{thm}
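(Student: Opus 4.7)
First reduce to $\Gamma = F_2$. Given the surjection $\pi \colon \Gamma \twoheadrightarrow F_2$ and counterexamples $g_k \colon F_2 \to \Sym(n_k)$, set $f_k := g_k \circ \pi$; then $\defect_\infty(f_k) = \defect_\infty(g_k)$. If some homomorphism $h \colon \Gamma \to \Sym(N_k)$ satisfied $d_\infty(h, f_k) < 1 - 5/k$, then $h$ would be close to $\id$ uniformly on $\ker \pi$, and a short descent along a coset transversal of $\ker \pi$ (in the spirit of the reduction used in Theorem \ref{thm:almost-trivial-on-finite-index}, but without needing amenability since $F_2$ is the quotient) would produce a homomorphism $\bar h \colon F_2 \to \Sym(N_k)$ close to $g_k$, contradicting the counterexample property of $g_k$. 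So it suffices to construct such $g_k$'s for $F_2 = \langle a, b\rangle$.

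For the construction on $F_2$, I would exploit the non-triviality of $H^2_b(F_2;\RR)$ (equivalently, the abundance of nontrivial homogeneous quasi-morphisms on $F_2$). Take $n_k$ to be polynomial in $k$ (say $n_k = k^{10}$) and fix a transitive homomorphism $\pi_k \colon F_2 \to \Sym(n_k)$, which exists in abundance since $F_2$ surjects onto every finite group. Choose a homogeneous quasi-morphism $\phi \colon F_2 \to \ZZ$ of bounded defect which is far from every actual homomorphism $F_2 \to \ZZ$ (e.g., a Brooks counting quasi-morphism associated to a nontrivial word). Define $f_k(w) := \pi_k(w)\cdot c_k(w)$, where $c_k(w) \in \Sym(n_k)$ is a ``discretization'' of $\phi(w)$ organized equivariantly along the orbit structure of $\pi_k$ so as to be an \emph{approximate} $1$-cocycle for the conjugation action of $F_2$ on $\Sym(n_k)$ via $\pi_k$. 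The defect of $f_k$ is then controlled by the quasi-morphism defect of $\phi$ divided by $n_k$, yielding $\defect_\infty(f_k) \leq 2/k$ for the chosen $n_k$.

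The distance bound then goes as follows. Any homomorphism $h \colon F_2 \to \Sym(N_k)$ with $d_\infty(h, f_k) < 1 - 5/k$ would, after restriction to the Hamming agreement set of $h$ with $\pi_k$ on the generators and a technical smoothing step, yield an \emph{exact} $1$-cocycle uniformly close to $c_k$; the associated scalar obstruction would be close to the homogenization of $\phi$ modulo a bounded error, forcing $\phi$ to be within bounded distance of a genuine homomorphism $F_2 \to \ZZ$ --- a contradiction. The witness element $\gamma \in F_2$ for the conclusion $d^H(h(\gamma), f_k(\gamma)) \geq 1 - 5/k$ is extracted from a word of length $\Theta(k)$ on which $\phi$ takes a large, maximally non-homomorphic value. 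For the case $N_k \geq n_k/(1 - 5/k)$, the inequality $d_\infty(h,f_k) \geq 1-5/k$ is automatic from the size-mismatch term in the definition of $d^H$, reducing the problem to the finite range of codomain sizes $N_k \in [n_k, n_k/(1-5/k)]$.

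\textbf{Main obstacle.} The principal difficulty is the quantitative translation of the (real-valued, infinite-range) bounded cohomology obstruction into a (permutation-valued, finite-range) Hamming distance lower bound: the ``discretization'' from $\phi$ to $c_k$ must be designed so that $\defect_\infty(f_k) \leq 2/k$ while simultaneously resisting every extension of $(f_k(a), f_k(b))$ to a homomorphism into $\Sym(N_k)$ for any $N_k \geq n_k$. The hardest regime is when $h$ agrees with $f_k$ on the generators only on a weak fraction of order $5/k$: standard triangle-inequality propagation of error through the witness word is vacuous there, and one must instead use the non-amenability of $F_2$ (say via a walk-tracking argument on the Schreier graph of $\pi_k$, or via an entropy/union-bound count over weakly-close generator pairs) to obtain the quantitative lower bound. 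The construction of Kun and Thom \cite{KunThom} provides a template for this step.
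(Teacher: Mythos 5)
Your proposal contains a genuine gap, and it is precisely the one you flag yourself. The reduction from $\Gamma$ to $F_2$ via pullback $f_k=g_k\circ\pi$ is correct (though the "descent along a coset transversal" you suggest is unnecessary: once a suitable $g_k$ and witness word $\gamma_0\in F_2$ are produced, one lifts $\gamma_0$ to $\Gamma$ and compares $f_k(\gamma_0)$ with $h_k(\gamma_0)$ directly). The real work is the construction of $g_k$, and there your plan --- discretizing a Brooks-type quasi-morphism $\phi$ into a permutation-valued approximate cocycle $c_k$, then translating nontriviality of $H^2_b(F_2;\RR)$ into a Hamming lower bound --- is exactly the route the authors remark is obstructed by the discrete nature of $\Sym(n)$, and you offer no concrete mechanism to carry it out: there is no definition of $c_k$, no defect bound, and the lower-bound argument ("walk-tracking'', "entropy/union-bound count'') is a placeholder, not an argument. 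In particular nothing in the proposal explains why a homomorphism $h$ into an \emph{arbitrary} $\Sym(N_k)$, $N_k\ge n_k$, with no relation to the Schreier graph of $\pi_k$, must be $1-5/k$-far from $f_k$.

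The paper's actual construction is Rolli-flavored rather than Brooks-flavored, and sidesteps bounded cohomology entirely. Define $g_k$ on reduced words by exponent reduction modulo $k$: send $x_1^{d_1}x_2^{e_1}\cdots$ to $\alpha_1^{\overline{d_1}}\alpha_2^{\overline{e_1}}\cdots$, where $\alpha_1,\alpha_2$ act on the grid $C_k\times C_k$ by coordinate shifts, with $\alpha_1$ additionally pinched by a single transposition. An induction on word lengths gives $\defect_\infty(g_k)\le 2/k$. The lower bound does not quantify over all near-homomorphisms: it produces a single witness $\gamma_0=(x_1^{N!-k+1}x_2)^k(x_1^{-k+1}x_2)^{-k}$ and observes that \emph{every} homomorphism $h$ into $\Sym(N)$ satisfies $h(x_1)^{N!}=\id$, hence $h(\gamma_0)=\id$; meanwhile the pinch makes $g_k((x_1x_2)^k)$ nearly the identity but $g_k((x_1^{-k+1}x_2)^k)$ nearly fixed-point-free, so $d^H(g_k(\gamma_0),\id)\ge 1-5/k$. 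This single algebraic identity is what replaces the entire bounded-cohomology machinery, and it is what is missing from your sketch. You may find it instructive that the paper's remark explicitly contrasts their approach with the cohomological one in \cite{BOT} and cites Rolli \cite{Rolli}, not Brooks, as the source of inspiration.
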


\begin{rem*}
Theorem \ref{thm: Free_gps_are_not_FUSP-1} is analogous to a result
of Burger, Ozawa and Thom \cite[Proposition 3.3]{BOT} that says that
for $r\geq2$, an approximate homomorphism $F_{r}\rightarrow\U\left(n\right)$
need not be close to a homomorphism. More generally, the same is true
whenever $F_{r}$ is replaced by a group $\Gamma$ such that the comparison
map $H_{b}^{2}\left(\Gamma,\RR\right)\rightarrow H^{2}\left(\Gamma,\RR\right)$
is noninjective \cite[Corollary 3.5]{BOT}. This condition holds when
$\Gamma$ is a nonelementary word-hyperbolic group \cite{EpsteinFujiwara}
(and was known earlier in the special case $\Gamma=F_{r}$ \cite{Brooks81}).

The discrete nature of $\Sym\left(n\right)$ makes it difficult to
prove Theorem \ref{thm: Free_gps_are_not_FUSP-1} by following the
lines of the proofs of \cite[Proposition 3.3]{BOT} or \cite[Corollary 3.5]{BOT}.
Our proof does not use cohomological methods. We do draw inspiration
from Rolli's construction of nontrivial quasimorphisms from $F_{r}$ to $\RR$
\cite{Rolli}.
\end{rem*}

In light of the success of \cite{EpsteinFujiwara} in generalizing the
cohomological result of \cite{Brooks81} from nonabelian free groups to
nonelementary word-hyperbolic groups, we pose the following open problem
that asks whether Theorem \ref{thm: Free_gps_are_not_FUSP-1}
can be generalized as well.
\begin{problem}[Are all nonelementary hyperbolic groups uniformly flexibly instable?]
Let $\Gamma$ be a nonelementary word-hyperbolic group.
Is there
a sequence of functions $\left(f_{k}\right)_{k=1}^{\infty}$,
$f_{k}\colon\Gamma\rightarrow\Sym\left(n_{k}\right)$,
$n_k\in\NN$,
such that
$\defect_{\infty}\left(f_{k}\right)\overset{k\rightarrow\infty}{\longrightarrow}0$,
but $d_{\infty}\left(f_{k},h_{k}\right)\geq C$ for every 
homomorphism $h_{k}\colon\Gamma\rightarrow\Sym\left(N_{k}\right)$,
$N_{k}\geq n_{k}$,
where $C>0$ does not depend on $k$?
\end{problem}
\ 

\subsection{A framework for stability}
\label{subsec:intro-framework}
The above results can be formulated in a single framework.
We consider the following objects.
\begin{itemize}
\item Two classes of groups: $\calC$ and $\calG$.
\item The full class of functions $\calF=\left\{ f\colon\Gamma\rightarrow G\mid\Gamma\in\calC,G\in\calG\right\} $.
\item A function $\defect\colon\calF\rightarrow\RR_{\geq0}$, called the
\emph{local defect}.
\item A distance function $d\colon\calF\times\calF\rightarrow\RR_{\geq0}\cup\left\{ \infty\right\} $.
\end{itemize}
The \emph{global defect} of $f\in\calF$ w.r.t. $d$ is
\[
D_{d}\left(f\right)=\inf\left\{ d\left(f,h\right)\mid\text{\ensuremath{h\in\calF} is a group homomorphism}\right\} \,\,\text{.}
\]
We say that the class $\calC$ of groups is \emph{$\left(\calG,\defect,d\right)$-stable}
if
$D_{d}\left(f\right)\leq R\left(\defect\left(f\right)\right)$
for every $f\in\calF$,
where $R\left(\delta\right)\overset{\delta\rightarrow0}{\longrightarrow}0$.
We say that $\calC$ is stable \emph{with linear rate} if  $R\left(\delta\right)\leq C\delta$ for a universal constant $C>0$.
A group $\Gamma$ is \emph{$\left(\calG,\defect,d\right)$-stable}
if the same is true for the class $\left\{ \Gamma\right\} $.

All of the results presented so far can be formulated using this
framework. In the context of stability in permutations, the role
of $\calG$ is taken by $\calS=\left\{ \Sym\left(n\right)\right\} _{n=1}^{\infty}$.
The role of $\defect$ is taken by $\defect_{\infty}$. Let $f,h\in\calF$.
We extend $d_{\infty}$ by setting $d_{\infty}\left(f,h\right)=\infty$
if the domains of $f$ and $h$ are different. We define $d_{\infty}^{\strict}\left(f,h\right)=d_{\infty}\left(f,h\right)$
if $f$ and $h$ have the same domain and range, and $d_{\infty}^{\strict}\left(f,h\right)=\infty$
otherwise. A class $\calC$ of groups is \emph{uniformly flexibly
stable in permutations} if it is $\left(\calS,\defect_{\infty},d_{\infty}\right)$-stable.
The class $\calC$ is \emph{uniformly strictly stable in permutations}
if it is $\left(\calS,\defect_{\infty},d_{\infty}^{\strict}\right)$-stable.
From now on, we omit the phrase ``in permutations''.

The results presented thus far about functions into symmetric groups are
summarized in the following table, where the numbers on the right
are theorem numbers or references (in brackets).
\begin{center}
\begin{tabular}{lll}
\noalign{\vskip\doublerulesep}
\textbf{Uniformly strictly stable:} & each finite group & \cite{GlebskyRivera}\tabularnewline
\noalign{\vskip\doublerulesep}
\textbf{Not uniformly strictly stable:} & the class of finite groups & \ref{thm:intro-not-strictly-stable}\tabularnewline
\noalign{\vskip\doublerulesep}
 & each group with unbounded finite quotients & \ref{thm:intro-not-strictly-stable}\tabularnewline
\noalign{\vskip\doublerulesep}
\textbf{Uniformly flexibly stable:} & the class of discrete amenable groups & \ref{thm:intro-amenable-positive}\tabularnewline
\noalign{\vskip\doublerulesep}
 & each $\SL_{r}\left(\ZZ\right)$, $r\geq3$ & \ref{thm:intro-SLr}\tabularnewline
\noalign{\vskip\doublerulesep}
\textbf{Not uniformly flexibly stable:} & each group that surjects onto $F_{2}$ & \ref{thm: Free_gps_are_not_FUSP-1}\tabularnewline
\end{tabular}
\par\end{center}
\vspace{1em}
In fact, \cite{GlebskyRivera} and Theorems \ref{thm:intro-amenable-positive} and \ref{thm:intro-SLr}  prove stability with linear rate.
\begin{rem*}
[Pointwise Stability]The related notion of \emph{pointwise stability}
has recently been under heavy investigation \cite{BeckerLubotzky,BLT,BeckerMosheiff,BowenBurton,DGLT,ESS,GlebskyRivera,HadwinShulman,Ioana,LLM,LevitLubotzky1,LevitLubotzky2,LubotzkyOppenheim,ThomICM,Zheng}.
A \emph{stability challenge} for $\Gamma$ is a sequence of functions
$\left(f_{k}\right)_{k=1}^{\infty}$, $f_{k}\colon\Gamma\rightarrow\Sym\left(n_{k}\right)$,
$n_{k}\in\NN$, such that $d^{H}\left(f_{k}\left(\gamma_{1}\gamma_{2}\right),f_{k}\left(\gamma_{1}\right)f_{k}\left(\gamma_{2}\right)\right)$
tends to zero as $k\rightarrow\infty$ for all $\gamma_{1},\gamma_{2}\in\Gamma$.
A \emph{solution} for $\left(f_{k}\right)_{k=1}^{\infty}$ is a sequence
of homomorphisms $\left(h_{k}\right)_{k=1}^{\infty}$, $h_{k}\colon\Gamma\rightarrow\Sym\left(N_{k}\right)$,
$N_{k}\geq n_{k}$, such that $d^{H}\left(f_{k}\left(\gamma\right),h_{k}\left(\gamma\right)\right)$
tends to zero as $k\rightarrow\infty$ for each $\gamma\in\Gamma$.
The group $\Gamma$ is \emph{pointwise stable in permutations} if
every stability challenge for $\Gamma$ has a solution.

For finitely presented groups, pointwise stability can be formalized
using the above framework if we allow each object of $\calC$ to be
a group with a fixed presentation (rather than just a group). Proximity
between functions is measured with respect to the images of the generators,
and the local defect is defined using the relators.

In the context of pointwise stability, the strict and flexible versions
are equivalent when the group $\Gamma$ is amenable \cite[Lemma~3.2(1)]{Ioana}.
The main result
of \cite{BLT} provides a useful equivalent condition for pointwise
stability among amenable groups. In particular, and as has been known
previously \cite{ArzhantsevaPaunescu,GlebskyRivera}, some amenable
groups are pointwise stable and some are not (in contrast with the flexible uniform case, as shown by Theorem \ref{thm:intro-amenable-positive}).
For $r\geq3$, the group $\SL_{r}\ZZ$ is not strictly stable both
in the pointwise sense \cite{BeckerLubotzky} and in the uniform sense
(by Theorem \ref{thm:intro-not-strictly-stable}). By Theorem \ref{thm:intro-SLr},
$\SL_{r}\ZZ$ is uniformly flexibly stable, but it is not known whether
it is pointwise flexibly stable. By \cite{BowenBurton}, if one can
find $r\geq5$ such that $\PSL_{r}\ZZ$ is pointwise flexibly stable,
it would solve a long-standing open problem by showing that not all
groups are sofic.
\end{rem*} 

\subsection{Probabilistic stability and homomorphism testing}

Let $\Gamma$ be a discrete amenable group. Fix a finitely-additive
measure $m$ on $\Gamma$ that is either left or right invariant.
The \emph{mean local defect} of a function $f\colon\Gamma\rightarrow\Sym\left(n\right)$
is given by
\[
\defect_{1}\left(f\right)=\intint d^{H}\left(f\left(\gamma_{1}\gamma_{2}\right),f\left(\gamma_{1}\right)f\left(\gamma_{2}\right)\right)\dm\left(\gamma_{1}\right)\dm\left(\gamma_{2}\right)\,\,\text{.}
\]
For $N\geq n$, $f\colon\Gamma\rightarrow\Sym\left(n\right)$ and
$h\colon\Gamma\rightarrow\Sym\left(N\right)$, the \emph{mean distance}
between $f$ and $h$ is
\[
d_{1}\left(f,h\right)=d_{1}\left(h,f\right)=\int d^{H}\left(f\left(\gamma\right),h\left(\gamma\right)\right)\dm\left(\gamma\right)\,\,\text{.}
\]

It is well known that every discrete amenable group $\Gamma$ admits
a finitely-additive measure that is simultaneously left, right and
inverse invariant (see Section \ref{subsec:amenable}). Henceforth,
we fix such a measure for each discrete amenable group. We prove that
the class of discrete amenable groups is \emph{probabilistically flexibly
stable}. That is, it is $\left(\calS,\defect_{1},d_{1}\right)$-stable.
More precisely, we prove the following analogue of Theorem \ref{thm:intro-amenable-positive}.
\begin{thm}
[Amenable groups are probabilistically flexibly stable]
\label{thm:intro-amenable-on-average-positive}Let $\Gamma$ be a
discrete amenable group and $f\colon\Gamma\rightarrow\Sym\left(n\right)$
a function, $n\in\NN$.
Then, there is
a homomorphism $h\colon\Gamma\rightarrow\Sym\left(N\right)$
such that
$d_{1}\left(h,f\right)\leq2913\defect_{1}\left(f\right)$ and
$n\leq N\leq\left(1+1740\defect_{1}\left(f\right)\right)n$.
\end{thm}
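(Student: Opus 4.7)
The plan is to mirror the proof of Theorem~\ref{thm:intro-amenable-positive}, systematically replacing supremum estimates with integral estimates against the bi-invariant measure $m$ on $\Gamma$. At the heart of the uniform construction there is a candidate relation on $\Gamma\times[n]$ of the form $(\gamma_1,x_1)\sim(\gamma_2,x_2)\iff f(\gamma_2^{-1}\gamma_1)(x_1)=x_2$. Had $f$ been a genuine homomorphism, this relation would be an equivalence relation, and the left $\Gamma$-action $\gamma\cdot[(\delta,x)]=[(\gamma\delta,x)]$ on its equivalence classes would carry the action back onto $[n]$. The failure of reflexivity, symmetry and transitivity of $\sim$ is controlled by the defect of $f$. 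In the uniform setting this is controlled pointwise; in the probabilistic setting the same identities, combined with bi-invariance of $m$ and Fubini, bound the same quantities in $m\otimes m$-measure by $\defect_1(f)$.

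The construction then has three parts. First, I would use the invariance of $m$ to show that, except on a set of ``bad'' orbits of $(m\otimes\text{counting})$-measure $O(\defect_1(f))$, the approximate relation already behaves like an equivalence relation; this is the $L^1$-analogue of the key combinatorial lemma underlying Theorem~\ref{thm:intro-amenable-positive}. Second, I would repair the relation by introducing $N-n$ extra points on which one declares $\Gamma$ to act trivially (or, more precisely, in whatever way completes the partial quotient to a full $\Gamma$-set), the number of added points being controlled by the total mass of the bad orbits, which yields $N-n\leq C\defect_1(f)\cdot n$. Third, one verifies that the resulting permutation action $h\colon\Gamma\to\Sym(N)$ is a genuine homomorphism, and that $d_1(h,f)=O(\defect_1(f))$; the latter is a direct integration over $\gamma\in\Gamma$ of the local disagreement between $h(\gamma)$ and $f(\gamma)$, which is concentrated on points lying in bad orbits and on the newly added points, both already accounted for.

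The main obstacle is the bookkeeping. The proof of Theorem~\ref{thm:intro-amenable-positive} is built from several explicit estimates that are run in $L^\infty$; each of them has to be re-derived in $L^1(m)$ (or $L^1(m\otimes m)$), carefully exploiting the fact that $m$ is simultaneously left-, right- and inverse-invariant so that expressions like $\int d^H(f(\gamma_1\gamma_2),f(\gamma_1)f(\gamma_2))\,dm(\gamma_1)dm(\gamma_2)$ and $\int d^H(f(\gamma),h(\gamma))\,dm(\gamma)$ can be interchanged freely with substitutions $\gamma\mapsto\gamma^{-1}$, $\gamma\mapsto\gamma_0\gamma$, etc. The explicit numerical constants $2913$ and $1740$ should then emerge from the same iterative correction scheme as in the uniform case, but with each ``$\sup$'' replaced by an average and each triangle inequality replaced by its $L^1$ counterpart.
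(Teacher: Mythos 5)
Your high-level plan (follow the uniform proof, replacing suprema by integrals) is roughly the right instinct, but the proposal omits the mechanism that actually makes the repair step work, and that is where the difficulty is. As a structural remark: the paper does not have a separate probabilistic proof to mirror. Both Theorems \ref{thm:intro-amenable-positive} and \ref{thm:intro-amenable-on-average-positive} are reduced simultaneously, via the symmetrization Proposition \ref{prop:U-symmetrization}, to Theorem \ref{thm:amenable-groups-symmetric-function}, and that theorem together with Proposition \ref{prop:amenable-main} already states the $\defect_{1}$/$d_{1}$ bounds alongside the $\defect_{\infty}$/$d_{\infty}$ ones. The weighted graph, the pruned graphs $X_{\eps}$, $Y_{\eps}$, $Z_{\eps}$, and the groupoid embedding are the \emph{same} in both regimes; only the final Markov-inequality applications in Lemmas \ref{lem:X-graph-analysis}--\ref{lem:Z-graph-analysis} split into a pointwise and an averaged variant. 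So the "$\sup\mapsto\int$" bookkeeping you anticipate is already done once, not twice.

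The genuine gap is in your repair step. You propose the raw relation $(\gamma_1,x_1)\sim(\gamma_2,x_2)\Leftrightarrow f(\gamma_2^{-1}\gamma_1)(x_1)=x_2$ on $\Gamma\times[n]$ and plan to delete ``bad orbits'' and complete what survives to a $\Gamma$-set. But this relation is decided by a single evaluation of $f$, so discarding pairs where it misbehaves is not stable: deletions can cascade, or leave a relation that is still not transitive, and there is no mechanism forcing the deleted mass to be $O(\defect_1(f))$. The paper's key device, which your proposal does not reconstruct, is the majority-vote weight $w(x\overset{\gamma}{\rightedge}y)=m(\{t\in\Gamma\mid f(t)f(t^{-1}\gamma)(x)=y\})$: it averages $f$ over \emph{all} decompositions of $\gamma$, and the two-level pruning (retain edges of weight $>1-2\eps$ between vertices of high $X_{\eps}$-degree) is precisely what makes Lemma \ref{lem:good-compositions} close compositions with no loss of weight, so that $Y_{\eps}$ is an honest $\Gamma$-groupoid. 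Without some such smoothing, part one of your plan is unsupported. Relatedly, your quotient lives on $\Gamma\times[n]$, which is infinite for infinite $\Gamma$; even granting an exact equivalence relation, showing the quotient has cardinality close to $n$ requires the index-versus-degree accounting of equation \eqref{eq:index-degree} and Lemma \ref{lem:Z-graph-analysis}(iv), which the proposal never touches. The paper sidesteps this entirely by keeping its vertex set inside $[n]$ until the very last step.
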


Theorem \ref{thm:intro-amenable-on-average-positive} has an application
to property testing in the case where the group $\Gamma$ is finite.
We begin by recalling a generalized version of the Blum--Luby--Rubinfeld
Theorem \cite{BLR} on homomorphism testing.
\begin{thm*} [Blum, Luby and Rubinfeld 1990, {\cite[Theorem 2.3]{Goldreich}}] 
\label{thm:BLR} Let $f\colon\Gamma\rightarrow G$
be a function between finite groups. Let 
\[
\delta=\frac{1}{\left|\Gamma\times\Gamma\right|}\left|\left\{ \left(\gamma_{1},\gamma_{2}\right)\in\Gamma\times\Gamma\mid f\left(\gamma_{1}\gamma_{2}\right)\neq f\left(\gamma_{1}\right)f\left(\gamma_{2}\right)\right\} \right|\,\,\text{.}
\]
If $\delta<\frac{1}{6}$, then there is a homomorphism $h\colon\Gamma\rightarrow G$
such that 
\[
\frac{1}{\left|\Gamma\right|}\left|\left\{ \gamma\in\Gamma\mid h\left(\gamma\right)\neq f\left(\gamma\right)\right\} \right|\leq2\delta\,\,\text{.}
\]
\end{thm*}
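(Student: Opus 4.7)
The plan is to use the classical self-correction technique of Blum, Luby, and Rubinfeld. I would define the candidate homomorphism $h \colon \Gamma \to G$ by the majority rule: for each $\gamma \in \Gamma$, let $h(\gamma)$ be a value $g \in G$ maximizing $\Pr_{\tau \in \Gamma}[f(\gamma \tau^{-1}) f(\tau) = g]$, with ties broken arbitrarily (under the hypothesis they will not occur). The proof then splits into three pieces: a self-correction lemma stating that this majority is overwhelming, a Markov argument bounding the fraction of $\gamma$ on which $h$ and $f$ disagree, and a chaining argument showing that $h$ is multiplicative.

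For the self-correction step, the goal is to show $\Pr_\tau[f(\gamma \tau^{-1}) f(\tau) = h(\gamma)] \geq 1 - O(\delta)$ for every $\gamma$. I would approach this by bounding from below the collision probability
\[
\Pr_{\tau_1, \tau_2 \in \Gamma}\bigl[f(\gamma \tau_1^{-1}) f(\tau_1) = f(\gamma \tau_2^{-1}) f(\tau_2)\bigr]:
\]
introduce auxiliary uniform independent random elements via substitutions such as $\tau_2 = \sigma \tau_1$ and apply the approximate-homomorphism hypothesis to the resulting uniform independent pairs, so that the collision event holds outside a bad set of measure $O(\delta)$. The elementary inequality $\sum_g p_g^2 \leq p_{\max}^2 + (1 - p_{\max})^2$ then forces $p_{\max} > 1/2$ whenever $\delta < 1/6$, so the majority is unique, and in fact $p_{\max} \geq 1 - O(\delta)$. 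To show $h$ is close to $f$, I would introduce $\delta(\gamma) = \Pr_\tau[f(\gamma \tau^{-1}) f(\tau) \neq f(\gamma)]$, observe that $\EE_\gamma[\delta(\gamma)] = \delta$ by reparametrization, and note that whenever $h(\gamma) \neq f(\gamma)$ the events $\{f(\gamma \tau^{-1}) f(\tau) = h(\gamma)\}$ and $\{f(\gamma \tau^{-1}) f(\tau) = f(\gamma)\}$ are disjoint, forcing $\delta(\gamma)$ close to $1$ on the disagreement set; Markov's inequality then bounds its density by $2\delta$.

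The heart of the argument is the chaining step. Fix $\gamma_1, \gamma_2 \in \Gamma$ and introduce two independent uniform $\tau, \sigma \in \Gamma$. I would aim for a chain of equalities of the shape
\begin{align*}
h(\gamma_1)\, h(\gamma_2) &= f(\gamma_1 \tau^{-1})\, f(\tau)\, f(\sigma)\, f(\sigma^{-1} \gamma_2) \\
&= f(\gamma_1 \tau^{-1})\, f(\tau\sigma)\, f(\sigma^{-1} \gamma_2) \\
&= f(\gamma_1 \tau^{-1})\, f(\tau \gamma_2) \\
&= h(\gamma_1 \gamma_2),
\end{align*}
where the first and last equalities come from the self-correction lemma applied to $\gamma_1$, $\gamma_2$, and $\gamma_1 \gamma_2$, and the two middle equalities come from the approximate-homomorphism hypothesis at the uniform independent pairs $(\tau, \sigma)$ and $(\tau\sigma, \sigma^{-1} \gamma_2)$. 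Each step fails with probability $O(\delta)$, so a union bound yields total failure probability bounded by a small constant times $\delta$. For $\delta < 1/6$ this is strictly less than $1$, so some $(\tau, \sigma)$ makes the entire chain valid; since both sides of $h(\gamma_1) h(\gamma_2) = h(\gamma_1 \gamma_2)$ are fixed elements of $G$, one good pair forces the identity.

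The main obstacle is calibrating the substitutions carefully. The approximate-homomorphism hypothesis only controls $f(ab)$ versus $f(a) f(b)$ when $(a,b)$ is uniform on $\Gamma \times \Gamma$, so each substitution in both the self-correction lemma and the chaining step must be arranged so that the intermediate pair is uniform and independent. A secondary technicality is that $f$ need not satisfy $f(e) = e$ or $f(\sigma^{-1}) = f(\sigma)^{-1}$, so if the naive chain appears to require such identities, additional random elements must be inserted and the error budget adjusted so that the total failure probability stays strictly below $1$ at the threshold $\delta < 1/6$.
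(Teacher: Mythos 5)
The paper does not prove this theorem --- it is stated with a citation to Goldreich's textbook --- so there is no internal proof to compare against. Your overall strategy (majority-vote self-correction, a collision-probability bound to establish a large plurality, Markov's inequality for closeness, and chaining for multiplicativity) is the standard and correct one, and your self-correction and closeness steps are essentially sound as sketched. However, the chaining step as written has a genuine quantitative gap. Your explicit chain uses five probabilistic events: three self-correction events (at $\gamma_1$, $\gamma_2$, and $\gamma_1\gamma_2$, each failing with probability at most $2\delta$) plus two approximate-homomorphism events (at $(\tau,\sigma)$ and $(\tau\sigma,\sigma^{-1}\gamma_2)$, each failing with probability at most $\delta$), for a total failure budget of $8\delta$. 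For $\delta$ close to $1/6$ this exceeds $1$, so the union bound does not guarantee a good pair $(\tau,\sigma)$, and the argument does not close at the stated threshold $\delta<1/6$ (it would only give a theorem for $\delta<1/8$).

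The fix is to change the self-correction formula to one with an explicit group inverse outside $f$: define $h(\gamma)$ as the plurality value of $f(\tau)^{-1}f(\tau\gamma)$ over uniform $\tau$. A collision argument identical in spirit to yours gives $\Pr_\tau\!\left[f(\tau)^{-1}f(\tau\gamma)=h(\gamma)\right]\geq 1-2\delta$, and the closeness bound $\leq 2\delta$ follows exactly as you outlined, since $\Pr_\tau\!\left[f(\tau\gamma)\neq f(\tau)f(\gamma)\right]$ has mean $\delta$ over $\gamma$. For multiplicativity, with a single uniform $\tau$ the three self-correction events
\[
h(\gamma_1)=f(\tau)^{-1}f(\tau\gamma_1),\qquad h(\gamma_2)=f(\tau\gamma_1)^{-1}f(\tau\gamma_1\gamma_2),\qquad h(\gamma_1\gamma_2)=f(\tau)^{-1}f(\tau\gamma_1\gamma_2)
\]
(the middle one valid since $\tau\gamma_1$ is also uniform) force $h(\gamma_1)h(\gamma_2)=h(\gamma_1\gamma_2)$ by exact telescoping: the terms $f(\tau\gamma_1)\cdot f(\tau\gamma_1)^{-1}$ cancel as genuine group inverses, with no appeal to the approximate-homomorphism hypothesis and no $f(\tau^{-1})$ versus $f(\tau)^{-1}$ issue. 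The failure budget is then $6\delta<1$ for $\delta<1/6$, which closes. Your formula $f(\gamma\tau^{-1})f(\tau)$ does not telescope in this way --- the adjacent factors $f(\tau)\,f(\sigma)$ do not cancel --- which is precisely why you were forced to introduce the extra random variable $\sigma$ and the two extra $\delta$-events that blow the budget.
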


Take $f$, $\Gamma$ and $G$ as in the theorem. The finite
group $\Gamma$ should be thought of as very large. Assume that $f$
is either a homomorphism or disagrees with every homomorphism $\Gamma\rightarrow G$
on many elements of $\Gamma$. The goal in homomorphism testing is
to distinguish between the two cases after reading $f\left(\gamma\right)$
for only a small number of elements $\gamma\in\Gamma$. This is achieved
by the following algorithm.

\begin{algorithm}[H]
\caption{Homomorphism testing}

\begin{raggedright}
\textbf{Input:} A function $f\colon\Gamma\rightarrow G$ between finite
groups
\par\end{raggedright}
\begin{raggedright}
\textbf{Output:} Accept or Reject
\par\end{raggedright}
\begin{lyxcode}
1:~Sample~$\left(\gamma_{1},\gamma_{2}\right)\in\Gamma\times\Gamma$~uniformly~at~random.

2:~If~$f\left(\gamma_{1}\gamma_{2}\right)=f\left(\gamma_{1}\right)f\left(\gamma_{2}\right)$,~return~Accept.

3:~Otherwise,~return~Reject.
\end{lyxcode}
\end{algorithm}

Clearly, if $f$ is a homomorphism then the algorithm accepts. Let
$0<\eps\leq1/3$ and assume that $f$ disagrees with every group homomorphism
$h\colon\Gamma\rightarrow G$ on at least $\eps\left|\Gamma\right|$
elements of $\Gamma$. The Blum--Luby--Rubinfeld Theorem implies that the
algorithm rejects with probability at least $\eps/2$. For
$\alpha>0$, we can amplify the rejection probability to be at least
$1-\alpha$ by running the algorithm for $k=\lceil\log_{1-\eps/2}\alpha\rceil=O\left(\frac{2\log\left(1/\alpha\right)}{\eps}\right)$
independent iterations and accepting if and only if all iterations accept.
Note that $k$ is independent of $\Gamma$ and $G$.

Now consider the scenario where the group $\Gamma$ is still very
large and $G=\Sym\left(n\right)$, where $n$ is also very large.
In this case, reading the permutation $f\left(\gamma\right)$, even
just for a single element $\gamma\in\Gamma$, may be too time consuming.
The following algorithm is tailored for this situation.

\begin{algorithm}[H]
\caption{\foreignlanguage{french}{\label{alg:Testing-of-homomorphisms}\foreignlanguage{english}{Testing
of homomorphisms into $\protect\Sym\left(n\right)$}}}

\begin{raggedright}
\textbf{Input:} A function $f\colon\Gamma\rightarrow\Sym\left(n\right)$,
where $\Gamma$ is a finite group and $n\in\NN$
\par\end{raggedright}
\begin{raggedright}
\textbf{Output:} Accept or Reject
\par\end{raggedright}
\begin{lyxcode}
1:~Sample~$\left(\gamma_{1},\gamma_{2},x\right)\in\Gamma\times\Gamma\times\left[n\right]$~uniformly~at~random.

2:~If~$f\left(\gamma_{1}\gamma_{2}\right)\left(x\right)=f\left(\gamma_{1}\right)f\left(\gamma_{2}\right)\left(x\right)$,~return~Accept.

3:~Otherwise,~return~Reject.
\end{lyxcode}
\end{algorithm}

Again, if $f$ is a homomorphism then the algorithm always accepts.
On the other hand, for $0<\eps\leq1$, if $d_{1}\left(f,h\right)\geq\eps$
for every homomorphism $h\colon\Gamma\rightarrow\Sym\left(N\right)$,
$N\geq n$, then the probability that the algorithm rejects is at
least $\frac{\eps}{2913}$. As before, the rejection probability may
be amplified by running the algorithm repeatedly.

Finally, we show that strict stability has the same caveats in the
probabilistic setting as in the uniform setting. That is, our proof
of Theorem \ref{thm:intro-not-strictly-stable} in Section \ref{sec:non-strict}
handles probabilistic stability in addition to uniform stability.
Hence, Algorithm \ref{alg:Testing-of-homomorphisms} is not a good
tester in the strict model.

\begin{rem*}[A recent breakthrough in quantum information theory]
	The recent solution \cite{MIPRE} to Connes' embedding problem
	relies on a probabilistic variant of the Gowers--Hatami Theorem
	(see also \cite{NatarajanVidick, NatarajanVidick2018, VidickBlog}).
	The latter plays a fundamental role in the proof by forcing the
	shared state of nearly optimal provers to be close to a specific
	desired state.
	We would like to know whether there are similar applications
	where Theorem \ref{thm:intro-amenable-on-average-positive}
	can be used instead of the Gowers--Hatami Theorem.
\end{rem*}
\ \\

\subsection{The structure of the paper and some comments on the proofs}

In Section \ref{sec:amenable-stable} we prove Theorems \ref{thm:intro-amenable-positive}
and \ref{thm:intro-amenable-on-average-positive}. The proof takes
a function $f\colon\Gamma\rightarrow\Sym\left(n\right)$ and restricts
each permutation $f\left(\gamma\right)$ to an injective function
$f\left(\gamma\right)\mid_{D\left(\gamma\right)}\colon D\left(\gamma\right)\rightarrow\left[n\right]$,
$D\left(\gamma\right)\subset\left[n\right]$, such that $f\left(\gamma_{1}\right)\mid_{D\left(\gamma_{1}\right)}\circ f\left(\gamma_{2}\right)\mid_{D\left(\gamma_{2}\right)\cap f\left(\gamma_{2}\right)^{-1}D\left(\gamma_{1}\right)}$
and $f\left(\gamma_{1}\gamma_{2}\right)\mid_{D\left(\gamma_{1}\gamma_{2}\right)}$
coincide on the intersection of their domains. We then extend each
$f\left(\gamma\right)\mid_{D\left(\gamma\right)}$ to a permutation
$h\left(\gamma\right)\colon\left[N\right]\rightarrow\left[N\right]$,
$N\geq n$, such that $h\colon\Gamma\rightarrow\Sym\left(N\right)$
is a homomorphism. Amenability is used in the restriction step to
ensure quantitative properties, such as the ratios $\frac{\left|D\left(\gamma\right)\right|}{n}$
being close to $1$ (uniformly in $\gamma$ or on average). After
the restriction step we are left with a purely algebraic structure
(a groupoid). The extension step does not make direct use of amenability.
The proof is related to the proof of homomorphism testing for finite
groups \cite[Theorem 2.3]{Goldreich} in the sense that both employ
majority voting.

In Section \ref{sec:special-linear} we prove a more general
version of Theorem \ref{thm:intro-SLr}. To that end we use Theorem \ref{thm:intro-amenable-positive}
on the subgroups of upper and lower triangular unipotent matrices
of $\SL_{r}$, apply bounded generation \cite{WitteMorris}, and
conclude by using Theorem \ref{thm:almost-trivial-on-finite-index}.
In Section \ref{sec:non-strict} we prove Theorem \ref{thm:intro-not-strictly-stable}
and its probabilistic version. To exclude strict stability
for a group $\Gamma$, we take a homomorphism $f\colon\Gamma\rightarrow\Sym\left( n \right)$
that defines a transitive action and deform it into a function $\hat{f}\colon\Gamma\rightarrow\Sym\left( n-1 \right)$
by bypassing $n$. We invoke
\cite[Proposition 2.4(ii)]{BeckerLubotzky} and some analysis
to show that $\hat{f}$ is far from every homomorphism in the strict
model. In Section \ref{sec:non-flexible} we prove Theorem \ref{thm: Free_gps_are_not_FUSP-1}.
We show that the free group $F_{2}$ is not flexibly uniformly stable
by constructing functions $f_{k}\colon F_{2}\rightarrow\Sym\left(n_{k}\right)$,
with small $\defect_{\infty}\left(f_{k}\right)$, that grossly violate
a group identity that holds in $\Sym\left(N\right)$ for all $N\geq n$.
In Appendix \ref{sec:triangle-ineq} we prove that $d^{H}$ satisfies
the triangle inequality. In Appendix \ref{sec:symmetrization} we
prove an auxiliary result that says that a function $f\colon\Gamma\rightarrow\Sym\left(n\right)$
with small local defect is close to a function that sends $1_{\Gamma}\mapsto\id$
and respects inverses. This result is used in Section \ref{sec:amenable-stable}.
Appendix \ref{sec:symmetrization} handles a more general case, where
$\Sym\left(n\right)$ is replaced by a metric group satisfying a
mild condition. 
\ \\

\subsection*{Acknowledgments}
We are thankful to Irit Dinur, Nati Linial, Alex Lubotzky and Thomas Vidick
for useful discussions and for their comments on this manuscript.

MC is supported in part by the European Research Council (ERC) under the European Union's Horizon 2020 research and innovation program (grant no. 692854)
of Alex Lubotzky. 
\ \\

\section{\label{sec:amenable-stable}Flexible stability of amenable groups}

In this section we prove Theorems \ref{thm:intro-amenable-positive}
and \ref{thm:intro-amenable-on-average-positive}, and deduce Theorem
\ref{thm:almost-trivial-on-finite-index}.

\subsection{\label{subsec:amenable}Preliminaries on amenable groups}

Let $\Gamma$ be a discrete group. Write $\calP\left(\Gamma\right)$
for the power set of $\Gamma$. A \emph{finitely-additive probability
measure} $m$ on $\Gamma$ is function $m\colon\calP\left(\Gamma\right)\rightarrow\left[0,1\right]$
such that $m\left(\Gamma\right)=1$ and $m\left(A\cup B\right)=m\left(A\right)+m\left(B\right)$
whenever $A$ and $B$ are disjoint subsets of $\Gamma$. We call
such an $m$ a \emph{measure} on $\Gamma$ for short. We say that
$m$ is \emph{left invariant} (resp. \emph{right invariant}) if $m\left(\gamma A\right)=m\left(A\right)$
(resp. $m\left(A\gamma\right)=m\left(A\right)$) for every $\gamma\in\Gamma$
and $A\subset\Gamma$. The group $\Gamma$ is \emph{amenable} if it
admits a left-invariant measure. Finite groups are amenable since
we may take $m$ to be the normalized counting measure. In this particular
case, the left-invariant measure is unique. Other basic examples of
amenable groups are abelian groups and, more generally, solvable groups.
Examples of non-amenable groups include free groups on more than one
generator and infinite groups with Property $\T$, such as $\SL_{r}\ZZ$,
$r\geq3$.

Let us give an example of a left-invariant measure $m$ on $\Gamma=\ZZ$.
To each $A\subset\ZZ$ we attach a bounded sequence $\left(a_{t}\right)_{t=1}^{\infty}$,
given by $a_{t}=\frac{\left|A\cap\left[-t,t\right]\right|}{2t+1}$.
If $\left(a_{t}\right)_{t=1}^{\infty}$ converges, we set $m\left(A\right)$
to be its limit. More generally, we set $m\left(A\right)$ to be a
carefully chosen accumulation point of $\left(a_{t}\right)_{t=1}^{\infty}$.
This is made possible by the axiom of choice, and if done carefully,
results in a finitely-additive measure $m$. The left invariance of
$m$ follows from the fact that for a fixed $r\in\ZZ$, the ratio
$\frac{\left|\left[-t,t\right]\triangle \left(r+\left[-t,t\right]\right)\right|}{\left|\left[-t,t\right]\right|}$
tends to zero as $t\rightarrow\infty$, where $\triangle$ denotes symmetric difference.
In other words, $\left(\left[-t,t\right]\right)_{n=1}^{\infty}$
is a \emph{Følner sequence} for $\ZZ$. It is possible to change our
proofs of Theorems \ref{thm:intro-amenable-positive} and \ref{thm:intro-amenable-on-average-positive}
to use $A\mapsto\left|A\cap\left[-t,t\right]\right|$ instead of $m$,
for a carefully chosen large $t$. However, the use of a limit (in
fact, an ultralimit) in the definition of $m$ saves us the effort
of tracking error terms in the course of the proof.

Let $m$ be a measure on $\Gamma$. There is a notion of integration
w.r.t. $m$ of bounded functions $\Gamma\rightarrow\CC$ (see \cite[Section 1.2.2]{DrutuKapovich}).
The integration functional $f\mapsto\int f\dm\colon L^{\infty}\left(\Gamma\right)\rightarrow\CC$
is a positive linear functional such that $\int{\bf 1}_{A}\dm=m\left(A\right)$.
Positivity means that $\int f\dm\geq0$ whenever the image of $f$
is contained in $\RR_{\geq0}$, and linearity means that integration
commutes with finite sums and multiplication by scalars. We may write
$\dm\left(\gamma\right)$ instead of $\dm$ to indicate that $\gamma$
is the variable of integration. If $m$ is left invariant, then $\int f\left(\gamma_{0}\gamma\right)\dm\left(\gamma\right)=\int f\left(\gamma\right)\dm\left(\gamma\right)$
for every $f\in L^{\infty}\left(\Gamma\right)$ and $\gamma_{0}\in\Gamma$.
For $\Omega\subset\Gamma$, write $\int_{\Omega}f\dm$ for $\int f\cdot{\bf 1}_{\Omega}\dm$.

A left-invariant measure $\mu$ on $\Gamma$ gives rise to
$m\colon\calP\left(\Gamma\right)\rightarrow\left[0,1\right]$, given by
\[
m\left(A\right)=\frac{1}{2}\int\left(\mu\left(A\gamma\right)+\mu\left(A^{-1}\gamma\right)\right)\,d\mu\left(\gamma\right)\,\,\text{.}
\]
Then $m$ is a measure on $\Gamma$ and it is \emph{bi-invariant},
that is, simultaneously left and right invariant. Furthermore, it
is \emph{inverse invariant}, that is, $m\left(A\right)=m\left(A^{-1}\right)$
for each $A\subset\Gamma$. Integration with respect to a right-invariant
or inverse-invariant measure has the corresponding invariance property.

Finally, a trivial but useful consequence of left or right invariance
of $m$ is that for a subgroup $H$ of $\Gamma$ we have $\left[\Gamma\colon H\right]=\frac{1}{m\left(H\right)}$,
interpreted as $\infty$ if $m\left(H\right)=0$.
\ \\
\subsection{The setup for the proof of Theorems \ref{thm:intro-amenable-positive}
and \ref{thm:intro-amenable-on-average-positive}}

Fix an amenable group $\Gamma$ with a measure $m$ that is left,
right and inverse invariant. We consider the metrics $d_{\infty}$
and $d_{1}$, and the local defects $\defect_{\infty}$ and $\defect_{1}$,
as defined in the introduction, where $d_{1}$ and $\defect_{1}$
are defined w.r.t. $m$.

\subsubsection{Symmetrization}
\begin{defn}
\label{def:symmetric-function}Let $\Gamma$ and $G$ be groups. A
function $f\colon\Gamma\rightarrow G$ is \emph{symmetric} if $f\left(1_{\Gamma}\right)=1_{G}$
and $f\left(\gamma^{-1}\right)=\left(f\left(\gamma\right)\right)^{-1}$
for all $\gamma\in\Gamma$.
\end{defn}

Appendix \ref{sec:symmetrization} deals with deforming a function
$f\colon\Gamma\rightarrow G$ into a symmetric function, and equips
us with the following proposition.
\begin{prop}
\label{prop:U-symmetrization}Let $\Gamma$ be a group and $f\colon\Gamma\to\Sym\left(n\right)$
a function. Then, there exists a symmetric function $f'\colon\Gamma\to\Sym\left(n\right)$
such that
\begin{align*}
d_{\infty}\left(f,f'\right) & \leq2\defect_{\infty}\left(f\right)\,\,\text{,}\\
d_{1}\left(f,f'\right) & \leq3\defect_{1}\left(f\right)\,\,\text{,}\\
\defect_{\infty}\left(f'\right) & \leq7\defect_{\infty}\left(f\right)\,\,\text{and}\\
\defect_{1}\left(f'\right) & \leq10\defect_{1}\left(f\right)\,\,\text{.}
\end{align*}
\end{prop}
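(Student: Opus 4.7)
The plan is to construct $f'$ directly by splitting $\Gamma$ according to its behavior under inversion. Write $\Gamma = \{1_\Gamma\} \sqcup I \sqcup A \sqcup A^{-1}$, where $I = \{\gamma \in \Gamma \setminus \{1_\Gamma\} : \gamma^2 = 1_\Gamma\}$ is the set of non-trivial involutions and $A$ picks one representative from each pair $\{\gamma, \gamma^{-1}\}$ with $\gamma \neq \gamma^{-1}$. Define $f'(1_\Gamma) = \id$; for $\gamma \in A$ set $f'(\gamma) = f(\gamma)$ and $f'(\gamma^{-1}) = f(\gamma)^{-1}$; and for $\gamma \in I$ set $f'(\gamma)$ to be the involution that agrees with $f(\gamma)$ on $\{x : f(\gamma)^2(x) = x\}$ and fixes every other point, so $d^H(f(\gamma), f'(\gamma)) = d^H(f(\gamma)^2, \id)$. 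Symmetry of $f'$ is immediate.

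Two pointwise estimates form the backbone. Taking $\gamma_1 = \gamma_2 = 1_\Gamma$ in the defect of $f$ and using bi-invariance of $d^H$ gives $d^H(f(1_\Gamma), \id) \leq \defect_\infty(f)$. A single triangle step through $f(\gamma \gamma^{-1}) = f(1_\Gamma)$ upgrades this to $d^H(f(\gamma^{-1}), f(\gamma)^{-1}) \leq 2\defect_\infty(f)$ for every $\gamma$, and specializing to $\gamma \in I$ gives $d^H(f(\gamma)^2, \id) \leq 2\defect_\infty(f)$. Summing the pointwise costs on $\{1_\Gamma\}$, $I$, $A$ and $A^{-1}$ yields $d_\infty(f, f') \leq 2\defect_\infty(f)$.

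For the mean bounds, the pointwise estimates are too weak and must be replaced by integrated versions. Set $\phi(\gamma) = \int d^H(f(\gamma)f(\beta), f(\gamma\beta)) \, dm(\beta)$, so that $\int_\Gamma \phi \, dm = \defect_1(f)$. Applying the triangle inequality to $f(\gamma^{-1})$ and $f(\gamma)^{-1}$ through the intermediate element $f(\gamma^{-1}\alpha) f(\alpha)^{-1}$ and integrating over $\alpha$, after a change of variable $\beta = \gamma^{-1}\alpha$ justified by left-invariance of $m$, yields the key inequality
\[
d^H(f(\gamma^{-1}), f(\gamma)^{-1}) \leq \phi(\gamma) + \phi(\gamma^{-1}).
\]
Restricting the defect integral to the slice $\gamma_2 \in \{1_\Gamma\}$ gives $m(\{1_\Gamma\}) \, d^H(f(1_\Gamma), \id) \leq \defect_1(f)$. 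Splitting $d_1(f, f')$ along the partition, using inverse-invariance of $m$ to rewrite integrals of $\phi(\gamma^{-1})$ as integrals of $\phi$ over the inverted set, and collapsing the $\phi$-inequality on $I$ to $2\phi(\gamma)$ (since there $\gamma = \gamma^{-1}$), the non-identity contributions combine to $\int_I \phi \, dm + \int_{\Gamma \setminus \{1_\Gamma\}} \phi \, dm \leq 2\defect_1(f)$. Adding the $\{1_\Gamma\}$ piece gives $d_1(f, f') \leq 3\defect_1(f)$.

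Finally, both defect bounds on $f'$ follow from the four-term triangle inequality
\[
d^H(f'(\gamma_1 \gamma_2), f'(\gamma_1) f'(\gamma_2)) \leq d^H(f'(\gamma_1 \gamma_2), f(\gamma_1 \gamma_2)) + d^H(f(\gamma_1 \gamma_2), f(\gamma_1) f(\gamma_2)) + d^H(f(\gamma_1), f'(\gamma_1)) + d^H(f(\gamma_2), f'(\gamma_2)).
\]
Taking a supremum bounds $\defect_\infty(f')$ by $3 \, d_\infty(f, f') + \defect_\infty(f) \leq 7\defect_\infty(f)$; a double integration with respect to $m$, using left-invariance to evaluate the first term, bounds $\defect_1(f')$ by $3 \, d_1(f, f') + \defect_1(f) \leq 10\defect_1(f)$. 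The main delicacy is the $d_1$ analysis on $I$: the naive pointwise estimate would not be controlled by $\defect_1$ alone, and only the collapse of the $\phi$-inequality on involutions combined with the inversion-invariance bookkeeping yields the linear rate.
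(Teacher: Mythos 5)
Your proof is correct and follows essentially the paper's own argument (Proposition \ref{prop:symmetrization-in-general} specialized via Lemma \ref{lem:2-torsion-Sym}, where $M_2=1$ and $C=2$): the same partition of $\Gamma$ into the identity, involutions, and paired elements, the same construction of $f'$ with a nearby involution on $2$-torsion elements, and the same four-term triangle inequality (Lemma \ref{lem:proximity-implies-similar-local-defect}) for the defect bounds. The only cosmetic difference is in the mean estimate for inverses: you package it as a pointwise inequality $d^{H}\left(f\left(\gamma^{-1}\right),f\left(\gamma\right)^{-1}\right)\leq\phi\left(\gamma\right)+\phi\left(\gamma^{-1}\right)$ before summing over the partition, whereas the paper integrates directly to get $\int d\left(f\left(\gamma\right)f\left(\gamma^{-1}\right),1_{G}\right)\dm\left(\gamma\right)\leq2\defect_{1}\left(f\right)$; the underlying change of variables and final bookkeeping are the same.
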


\begin{proof}
The claim is a special case of Proposition \ref{prop:symmetrization-in-general}
(see Lemma \ref{lem:2-torsion-Sym}).
\end{proof}
By virtue of Proposition \ref{prop:U-symmetrization}, Theorems \ref{thm:intro-amenable-positive}
and \ref{thm:intro-amenable-on-average-positive} follow from the
following theorem.
\begin{thm}
\label{thm:amenable-groups-symmetric-function}Let $n\in\NN$ and
let $f\colon\Gamma\rightarrow\Sym\left(n\right)$ be a symmetric function.
Write $\delta_{\infty}=\defect_{\infty}\left(f\right)$ and $\delta_{1}=\defect_{1}\left(f\right)$.
Then, there is $N\geq n$ and a homomorphism $h\colon\Gamma\rightarrow\Sym\left(N\right)$
such that
\[
\text{\ensuremath{N\leq\left(1+174\delta_{\infty}\right)n} and \ensuremath{d_{\infty}\left(h,f\right)\leq291\delta_{\infty}}}
\]
and
\[
\text{\ensuremath{N\leq\left(1+174\delta_{1}\right)n} and \ensuremath{d_{1}\left(h,f\right)\leq291\delta_{1}} .}
\]
\end{thm}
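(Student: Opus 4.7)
\emph{Overall strategy.} Following the restriction--extension blueprint announced in the introduction, I will (i) replace each permutation $f(\gamma)$ by a partial bijection $\hat f(\gamma)\colon D(\gamma)\to[n]$ with $D(\gamma)\subseteq[n]$ of density close to $1$, such that $\hat f(\gamma)$ is close to $f(\gamma)$ and the family $\{\hat f(\gamma)\}_{\gamma\in\Gamma}$ is multiplicative on the nose; and (ii) algebraically extend this partial $\Gamma$-action to a genuine homomorphism $h\colon\Gamma\to\Sym(N)$ with $N$ only slightly larger than $n$. Amenability---in the form of the bi- and inverse-invariant measure $m$---enters only in step (i); step (ii) is purely algebraic.

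\emph{Step 1: Restriction via majority voting.} For each $\gamma\in\Gamma$ and $x\in[n]$ I define the vote distribution
\[
V_\gamma(x,y)\;=\;m\bigl(\{\gamma_0\in\Gamma:f(\gamma\gamma_0^{-1})f(\gamma_0)(x)=y\}\bigr),\qquad y\in[n].
\]
Since each $\gamma_0$ contributes to exactly one $y$, one has $\sum_yV_\gamma(x,y)=1$, so at most one $y$ can satisfy $V_\gamma(x,y)>1/2$; call this $y$ (when it exists) $\hat f(\gamma)(x)$, and let $D(\gamma)$ be the set of $x$ for which it does. A Fubini-plus-Markov argument converts the hypothesis $\defect_\infty(f)\le\delta_\infty$ (resp.\ $\defect_1(f)\le\delta_1$) into $|[n]\setminus D(\gamma)|/n\le C\delta_\infty$ uniformly in $\gamma$ (resp.\ on $m$-average) and $d^H(\hat f(\gamma),f(\gamma))\le C'\delta_\infty$. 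Symmetry of $f$ is what ensures that $\gamma_0\mapsto\gamma_0^{-1}$ and $\gamma_0\mapsto\gamma\gamma_0^{-1}$ are genuine measure-preserving reparametrisations.

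\emph{Step 2: Groupoid identities.} I then verify that $\hat f(\gamma)|_{D(\gamma)}$ is injective, that $\hat f(\gamma^{-1})$ inverts $\hat f(\gamma)$ on the image, and that $\hat f(\gamma_1)\circ\hat f(\gamma_2)$ agrees with $\hat f(\gamma_1\gamma_2)$ wherever both sides are defined. Each identity reduces, via a change of variable in $\gamma_0$ and bi-/inverse-invariance of $m$, to the elementary observation that two subsets of $\Gamma$ of $m$-measure strictly greater than $1/2$ must intersect: at any common point the corresponding majority-vote values are forced to coincide. The outcome is a partial $\Gamma$-action (a groupoid) on $[n]$.

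\emph{Step 3: Algebraic completion, and the anticipated obstacle.} The partial action partitions its support into finite orbits $O_1,\dots,O_k\subseteq[n]$. For each $j$ I fix a basepoint $x_j\in O_j$ and let $H_j\le\Gamma$ be the subgroup generated by the partial stabiliser $\{\gamma\in\Gamma:x_j\in D(\gamma),\ \hat f(\gamma)(x_j)=x_j\}$. The groupoid identities of Step 2 make $\gamma H_j\mapsto\hat f(\gamma)(x_j)$ into a well-defined injection of a subset of $\Gamma/H_j$ onto $O_j$, which forces $[\Gamma:H_j]<\infty$. Replacing each $O_j$ by the finite coset space $\Gamma/H_j$ (adding $[\Gamma:H_j]-|O_j|$ fresh points per orbit) and making $\Gamma$ act trivially on $[n]\setminus\bigsqcup_jO_j$ produces a homomorphism $h\colon\Gamma\to\Sym(N)$ with $N\ge n$. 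The quantitative bookkeeping $N-n=\sum_j([\Gamma:H_j]-|O_j|)+|[n]\setminus\bigsqcup_jO_j|$ is the step I expect to be the main obstacle: each missing coset in orbit $j$ corresponds to some $\gamma$ at which majority voting failed at $x_j$, so one must double-count these failures against the domain estimates of Step 1 to absorb the total excess into $174\,\delta_\bullet\cdot n$; the distance bound $d_\bullet(h,f)\le291\,\delta_\bullet$ then follows by comparing $h$ with $\hat f$ on $D(\gamma)$ and with $f$ on the complement, for $\bullet\in\{\infty,1\}$.
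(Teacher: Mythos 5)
Your Steps 1 and 3 follow the paper's restriction--extension blueprint faithfully (your $V_\gamma(x,y)$ equals the paper's edge-weight $w(x\overset{\gamma}{\rightedge}y)$ after the substitution $t=\gamma\gamma_0^{-1}$, and the orbit-to-coset completion is exactly Proposition~\ref{prop:Groupoid_embedding_action} plus the bookkeeping in~(\ref{eq:V1-sum-of-inverses})). The gap is in Step~2: the strict-majority threshold $>1/2$ does \emph{not} yield the groupoid triangle identity. Injectivity and the inverse identity genuinely are ``two sets of measure $>\tfrac12$ intersect'' arguments (the relevant sets are in fact disjoint or equal, respectively), but the triangle condition involves three weight hypotheses $V_{\gamma_2}(x,y)>\tfrac12$, $V_{\gamma_1}(y,z)>\tfrac12$, $V_{\gamma_1\gamma_2}(x,u)>\tfrac12$, and three sets of measure $>\tfrac12$ need not have a common point. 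Concretely, the analogue of Lemma~\ref{lem:complete-the-triangle} only gives $V_{\gamma_1\gamma_2}(x,z) > V_{\gamma_2}(x,y)+V_{\gamma_1}(y,z)-1$, which can be arbitrarily close to $0$; nothing rules out $z\neq u$ with $V_{\gamma_1\gamma_2}(x,z)=0.4$ and $V_{\gamma_1\gamma_2}(x,u)=0.55$. So $\hat f$ as you define it need not be a partial action, and the orbit decomposition of Step~3 is not available.

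The paper's resolution of exactly this degradation problem is the two-scale construction in Definition~\ref{def:from-X_f-to-X_0} and Lemma~\ref{lem:good-compositions}: one works with edge threshold $1-2\eps$ (here $\eps=1/6$, so weight $>2/3$) \emph{and additionally} restricts to vertices of out-degree $>2/3$ in the \emph{stricter} graph $X_\eps$ (edges of weight $>1-\eps=5/6$). The high-degree hypothesis at $x$, $y$, $z$ gives, by pigeonhole, a single $\alpha\in\Gamma$ such that all three of $x\overset{\alpha\gamma_2\gamma_1}{\rightedge}$, $y\overset{\alpha\gamma_2}{\rightedge}$, $z\overset{\alpha}{\rightedge}$ have weight $>1-\eps$; the three-edge Lemma~\ref{lem:three-strong-edges} (which needs weight $>2/3$ and therefore $\eps\le 1/6$) then forces these to form a coherent triangle, and Lemma~\ref{lem:complete-the-triangle} applied going ``up and back down'' through $\alpha$ recovers the full weight $>1-2\eps$ for the composite edge, with no loss. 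Without this bootstrap, your partial maps fail to compose consistently, and the theorem does not follow. You would need to replace the naive $1/2$ majority with this $\eps$-versus-$2\eps$ filtering (or some equivalent device) before Step~3 can run.
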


Note that $\delta_{1}\le\delta_{\infty}$ and thus the claim $N\leq\left(1+174\delta_{\infty}\right)n$
follows at once from $N\leq\left(1+174\delta_{1}\right)n$.

\subsubsection{$\Gamma$-graphs}

The proof of Theorem \ref{thm:amenable-groups-symmetric-function}
is based on a graph-theoretic approach.
\begin{defn}
A \emph{$\Gamma$-graph }is a pair $X=\left(V,E\right)$, where $V$
is a set and $E$ is a subset of $V\times\Gamma\times V$. The elements
of $V$ are the \emph{vertices} of $X$ and the elements of $E$ are
the oriented $\Gamma$-labelled \emph{edges} of $X$. We use the notation
$x\overset{\gamma}{\rightedge}y$ to denote the edge $\left(x,\gamma,y\right)$
with \emph{origin} $x$, \emph{destination} $y$ and \emph{label}
$\gamma$. We require that for every $x\in V$ and $\gamma\in\Gamma$
there exists at most one $y\in V$ such that $x\overset{\gamma}{\rightedge}y\in E$.
\end{defn}

With $X$ as above, denote $V\left(X\right)\coloneqq V$ and $E\left(X\right)\coloneqq E$.
We say that $X$ is \emph{finite} if $\left|V\left(X\right)\right|<\infty$.
All $\Gamma$-graphs that appear in our argument are finite in this
sense.
\begin{defn}
\label{def:Function_graph}Let $V$ be a set and take a function $f\colon\Gamma\rightarrow\Sym\left(V\right)$.
The \emph{function graph} $X_{f}$ of $f$ is the $\Gamma$-graph
with vertex set $V$ and edge set
\[
E\left(X_{f}\right)=\left\{ x\overset{\gamma}{\rightedge}f\left(\gamma\right)\left(x\right)\mid x\in V,\gamma\in\Gamma\right\} \,\,\text{.}
\]
\end{defn}

The function $f$ is symmetric if and only if all edges of $X_{f}$
with label $1_{\Gamma}$ are loops and for every edge $x\overset{\gamma}{\rightedge}y\in E(X_f)$,
we have $y\overset{\gamma^{-1}}{\rightedge}x\in E(X_f)$. In this case,
$f$ is a homomorphism if and only if every path in $X_{f}$ of the
form $\overset{\gamma_{1}}{\rightedge}\overset{\gamma_{2}}{\rightedge}\overset{\gamma_{1}^{-1}\gamma_{2}^{-1}}{\rightedge}$
is closed (for every starting vertex). Informally, if $\defect_{1}\left(f\right)$
is small, then almost all of these paths are closed, and the same
is true if $\defect_{\infty}\left(f\right)$ is small since $\defect_{1}\left(f\right)\le\defect_{\infty}\left(f\right)$.

Let $X=\left(V,E\right)$ be a $\Gamma$-graph. We write $x\overset{\gamma}{\rightedge}\in E$
to indicate that there is a vertex $y\in V$ such that $x\overset{\gamma}{\rightedge}y\in E$.
For $\gamma\in\Gamma$, the \emph{domain} of $\gamma$ is $D_{X}\left(\gamma\right)\coloneqq\left\{ x\in V\mid x\overset{\gamma}{\rightedge} \in E\right\} $.
The \emph{set of outgoing labels }from a vertex $x\in V$ is ${\rm \OE}_{X}\left(x\right)\coloneqq\left\{ \gamma\in\Gamma\mid x\overset{\gamma}{\rightedge}\in E\right\} $
and the \emph{out degree} of a vertex $x\in V$ is $\deg_{X}\left(x\right)\coloneqq m\left(\OE\left(x\right)\right)$.
In this way, the measure $m$ enables us to define a useful notion
of a degree in the graph $X$, where a vertex may have infinitely
many incident edges. When the graph $X$ is clear from the context,
we may omit it from the notation in $D_{X}\left(\gamma\right)$ and
$\OE_{X}\left(x\right)$.
\begin{fact}
\label{fact:deg-and-domain}For a $\Gamma$-graph $X=\left(V,E\right)$, the sum of
out degrees is equal to the integral of the cardinalities of domains.
That is,
\[
\sum_{x\in V}\deg_{X}\left(x\right)=\int\left|D_{X}\left(\gamma\right)\right|\dm\left(\gamma\right)\,\,\text{.}
\]
\end{fact}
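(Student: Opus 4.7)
The plan is to recognize this as a Fubini/double-counting identity between the counting measure on $V$ and the finitely-additive measure $m$ on $\Gamma$, keyed on the indicator function of the edge set $E$.

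First I would introduce the bipartite indicator
\[
\chi\colon V \times \Gamma \to \{0,1\}, \qquad \chi(x,\gamma) = \begin{cases} 1 & \text{if } x \overset{\gamma}{\rightedge} \in E, \\ 0 & \text{otherwise.}\end{cases}
\]
By the very definitions of $\OE_X(x)$ and $D_X(\gamma)$, we have $\chi(x,\gamma) = \mathbf{1}_{\OE_X(x)}(\gamma) = \mathbf{1}_{D_X(\gamma)}(x)$. Hence, on the one hand,
\[
\deg_X(x) \;=\; m\bigl(\OE_X(x)\bigr) \;=\; \int \mathbf{1}_{\OE_X(x)}(\gamma)\dm(\gamma) \;=\; \int \chi(x,\gamma)\dm(\gamma),
\]
and on the other hand, for each fixed $\gamma\in\Gamma$,
\[
\sum_{x\in V} \chi(x,\gamma) \;=\; \sum_{x\in V}\mathbf{1}_{D_X(\gamma)}(x) \;=\; |D_X(\gamma)|.
\]

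Next I would swap the order of summation and integration. Since the statement is applied only to finite $\Gamma$-graphs (as noted in the paragraph preceding Definition \ref{def:Function_graph}), the set $V$ is finite, so the sum over $x \in V$ is a finite sum and commutes with integration against $m$ by linearity of the integration functional $L^{\infty}(\Gamma) \to \CC$. Therefore,
\[
\sum_{x\in V}\deg_X(x) \;=\; \sum_{x\in V}\int \chi(x,\gamma)\dm(\gamma) \;=\; \int \sum_{x\in V}\chi(x,\gamma)\dm(\gamma) \;=\; \int |D_X(\gamma)|\dm(\gamma),
\]
which is the desired equality.

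There is no real obstacle: the only subtle point is that $m$ is only finitely additive, so one cannot invoke Tonelli's theorem; but finiteness of $V$ reduces the exchange to linearity of the integration functional on $L^{\infty}(\Gamma)$, which is built into the setup reviewed in Section \ref{subsec:amenable}. The boundedness of $\chi(x,\cdot)$ (it takes values in $\{0,1\}$) ensures that each integrand lies in $L^{\infty}(\Gamma)$, so the integrals are well defined.
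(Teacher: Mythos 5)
Your proof is correct and is precisely the argument the paper intends: the paper states the fact "follows immediately from the definitions and the basic properties of integration discussed in Section \ref{subsec:amenable}," and your double-counting via the indicator $\chi$ and the exchange of a finite sum with the finitely-additive integral (justified by linearity, not Tonelli) is exactly the unpacking of that remark.
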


Fact \ref{fact:deg-and-domain} follows immediately from the definitions
and the basic properties of integration discussed in Section \ref{subsec:amenable}.

Let $X=(V,E)$ and $X'=(V',E')$ be $\Gamma$-graphs. We say that $X$ is a \emph{subgraph} of $X'$ if $V\subseteq V'$ and $E\subseteq E'$.
A function $\varphi\colon V\rightarrow V'$
is a \emph{morphism of $\Gamma$-graphs} from $X$ to $X'$ if $\varphi\left(x_{1}\right)\overset{\gamma}{\rightedge}\varphi\left(x_{2}\right)$
is in $E'$ whenever $x_{1}\overset{\gamma}{\rightedge}x_{2}$
is in $E$. Such a function $\varphi$ is an \emph{embedding}
of $X$ in $X'$ if it is injective.

The heart of the proof of Theorem \ref{thm:amenable-groups-symmetric-function}
lies in the proof of the following proposition.
\begin{prop}
\label{prop:amenable-main}Let $f\colon\Gamma\rightarrow\Sym\left(n\right)$
be a symmetric function, $n\in\NN$. Write $\delta_{\infty}=\defect_{\infty}\left(f\right)$
and $\delta_{1}=\defect_{1}\left(f\right)$, and assume that $\delta_{1}\leq1/78$.
Then there is a subgraph $Z$ of the function graph $X_{f}$, a finite
set $V_{1}$ and a homomorphism $g\colon\Gamma\rightarrow\Sym\left(V_{1}\right)$
such that:\renewcommand{\labelenumi}{\roman{enumi})}
\begin{enumerate}
\item $Z$ embeds in the function graph $X_{g}$.
\item $\left|V\left(Z\right)\right|\geq\left(1-96\delta_{1}\right)n$.
\item For every $\gamma_{0}\in\Gamma$,
\begin{equation}
\left|D_{Z}\left(\gamma_{0}\right)\right|\geq\left(1-117\delta_{\infty}\right)n\label{eq:X0-strong-infty}
\end{equation}
and
\begin{equation}
\int\left|D_{Z}\left(\gamma\right)\right|\dm\left(\gamma\right)\geq\left(1-117\delta_{1}\right)n\,\,\text{.}\label{eq:X0-strong-L1}
\end{equation}
\item $n\leq\left|V_{1}\right|\leq\left(1+78\delta_{1}\right)n$.
\end{enumerate}
\end{prop}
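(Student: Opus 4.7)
The plan is to first use the amenability of $\Gamma$ (through an averaging and majority-vote argument based on $m$) to extract from the function graph $X_{f}$ a large ``groupoid-like'' subgraph $Z$, and then to apply a purely algebraic completion that embeds $Z$ into the function graph of a genuine homomorphism $g\colon\Gamma\to\Sym(V_{1})$ on a slightly larger set. To this end, for each $x\in[n]$ and $\gamma\in\Gamma$ I would introduce the candidate value
\[
g(\gamma)(x)\;=\;\operatorname{argmax}_{y\in[n]}\,m\bigl(\{\eta\in\Gamma:f(\gamma\eta^{-1})(f(\eta)(x))=y\}\bigr),
\]
with ties broken arbitrarily; heuristically this returns the most common way of computing $f(\gamma)(x)$ via a factorization $\gamma=(\gamma\eta^{-1})\eta$. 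The per-vertex defect
\[
b(x)\;=\;\iint{\bf 1}\bigl[f(\gamma)(x)\ne f(\gamma\eta^{-1})(f(\eta)(x))\bigr]\,\dm(\gamma)\,\dm(\eta)
\]
satisfies $\sum_{x\in[n]}b(x)=n\delta_{1}$, as one sees from the substitution $\gamma_{1}=\gamma\eta^{-1}$, $\gamma_{2}=\eta$ and the right-invariance of $m$.

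Fixing a threshold $\alpha$ (to be optimized so the claimed constants $96,117,78$ emerge), I would set $V_{0}=\{x:b(x)\le\alpha\}$, so Markov's inequality yields $|V_{0}|\ge(1-\delta_{1}/\alpha)n$. For $x\in V_{0}$, the argmax defining $g(\gamma)(x)$ is strict and equals $f(\gamma)(x)$ for the vast majority of $\gamma$; one then verifies that $g$ is a \emph{partial homomorphism} on $V_{0}$, meaning that $g(\gamma_{1}\gamma_{2})(x)=g(\gamma_{1})(g(\gamma_{2})(x))$ whenever both $x$ and $g(\gamma_{2})(x)$ lie in $V_{0}$. Shrinking $V_{0}$ further to enforce symmetry ($g(1_{\Gamma})=\id$ and $g(\gamma^{-1})\circ g(\gamma)=\id$ wherever defined) would give $V(Z)\subseteq V_{0}$, and I would then take
\[
E(Z)=\{x\overset{\gamma}{\rightedge}f(\gamma)(x):x,\,f(\gamma)(x)\in V(Z),\ f(\gamma)(x)=g(\gamma)(x)\}.
\]
The bound $|V(Z)|\ge(1-96\delta_{1})n$ follows from a union of finitely many $L^{1}$-type per-vertex cuts controlled by $\delta_{1}$. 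For a fixed $\gamma_{0}\in\Gamma$, the set $V(Z)\setminus D_{Z}(\gamma_{0})$ decomposes as a union of finitely many ``bad'' subsets, each of size $\le O(\delta_{\infty})n$ by the sup-definition of $\delta_{\infty}$, giving $|D_{Z}(\gamma_{0})|\ge(1-117\delta_{\infty})n$; integrating against $m$ then replaces $\delta_{\infty}$ by $\delta_{1}$ in the integrated version.

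Finally, the restriction of $g$ to $V(Z)$ is a $\Gamma$-groupoid action, which I would extend to a genuine group action on a finite set $V_{1}\supseteq V(Z)$ via the standard orbit-completion: take $V_{1}$ to be the quotient of $V(Z)\times\Gamma$ by the equivalence $(x,\gamma)\sim(y,\eta)$ iff the partial action gives $g(\gamma\eta^{-1})(y)=x$. Then left $\Gamma$-translation on the second coordinate descends to a $\Gamma$-action on $V_{1}$, and $x\mapsto[(x,1_{\Gamma})]$ is the required embedding $V(Z)\hookrightarrow V_{1}$, realizing $Z\hookrightarrow X_{g}$. The hard part, where I expect the main obstacle, will be controlling $|V_{1}\setminus V(Z)|$: each $\Gamma$-orbit in $V_{1}$ is a coset space $\Gamma/H_{x}$ for some partial stabilizer $H_{x}$, and one must show both that $H_{x}$ has finite index and that the total number of virtual cosets is linear in $\delta_{1}n$ rather than growing with $\Gamma$. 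The integrated deficit bound $\int(|V(Z)|-|D_{Z}(\gamma)|)\,\dm(\gamma)=O(\delta_{1})n$ from the previous step, together with the hypothesis $\delta_{1}\le 1/78$ (which keeps the majority robust), should give $|V_{1}|\le(1+78\delta_{1})n$.
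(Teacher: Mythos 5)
Your majority-vote idea is the right intuition, and in fact your set $\{\eta : f(\gamma\eta^{-1})(f(\eta)(x)) = y\}$ is precisely the paper's set of \emph{supporters} $T(x\overset{\gamma}{\rightedge}y)$ after the reparametrization $t=\gamma\eta^{-1}$, and your $b(x)$ integrates to $n\delta_1$ exactly as you say. The gap is at ``one then verifies that $g$ is a partial homomorphism on $V_0$.'' Having $b(x)\leq\alpha$ only says that the edge $x\overset{\gamma}{\rightedge}f(\gamma)(x)$ has high weight for \emph{most} $\gamma$, not all; for the exceptional $\gamma$ your argmax can be a bare plurality (and you admit arbitrary tie-breaking), in which case nothing forces $g(\gamma_1\gamma_2)(x)=g(\gamma_1)(g(\gamma_2)(x))$. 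The composition argument needs three supporter-sets each of measure $>2/3$ to intersect (Lemma \ref{lem:three-strong-edges}), and even then the naive composition degrades the threshold from $1-\eps$ to $1-2\eps$ (Lemma \ref{lem:complete-the-triangle}). The paper's device for stopping this deterioration is the two-level construction: keep edges of weight $>1-2\eps$, but restrict to vertices whose out-degree in the stricter $>1-\eps$ graph exceeds $2/3$. Only this $\eps$ vs.\ $2\eps$ interplay (Lemma \ref{lem:good-compositions}) yields a genuine $\Gamma$-groupoid $Y_\eps$; argmax on a vertex set chosen by Markov alone does not. (The part where you ``shrink $V_0$ further to enforce symmetry'' is also unnecessary under the hypothesis that $f$ is symmetric: $w(x\overset{\gamma}{\rightedge}y)=w(y\overset{\gamma^{-1}}{\rightedge}x)$ holds automatically.)

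The second gap is your admitted ``hard part,'' which is exactly where the remaining work lies. Your orbit-completion $(V(Z)\times\Gamma)/\sim$ is, in spirit, the paper's embedding of each connected component $C$ into the coset space $\Gamma/\Gamma_x$. But neither the finiteness of $V_1$ nor the bound $|V_1|\leq(1+78\delta_1)n$ follows from $\delta_1\leq 1/78$ alone. What you need is the structural fact that in a connected $\Gamma$-groupoid the out-degree is constant, giving $|V(C)|/[\Gamma:\Gamma_x]=\deg(C)$; this is why the paper discards the components of $Y_\eps$ with degree $<1/2$ (forming $Z_\eps$), thereby forcing $[\Gamma:\Gamma_x]<\infty$ and yielding $|V_1|=\sum_{x\in V(Z)}(\deg_Z x)^{-1}\leq(1+O(\delta_1))n$ via a Markov estimate on the average degree (Lemma \ref{lem:Z-graph-analysis}(iv)). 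Without something equivalent to this degree-constancy argument and the explicit removal of low-degree components, the stabilizer index is uncontrolled and the size bound is unjustified.
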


Theorem \ref{thm:amenable-groups-symmetric-function} follows from
Proposition \ref{prop:amenable-main} as follows.
\begin{proof}
[Proof of Theorem \ref{thm:amenable-groups-symmetric-function}]We
are given a function $f\colon\Gamma\rightarrow\Sym\left(n\right)$
and need to define a homomorphism $h\colon\Gamma\rightarrow\Sym\left(N\right)$,
$N\geq n$, such that $h$ and $f$ are close together. If $\delta_{1}>1/78$,
set $N=n$ and let $h\colon\Gamma\rightarrow\Sym\left(N\right)$ be
the trivial homomorphism. Assume henceforth that $\delta_{1}\leq1/78$.
Apply Proposition \ref{prop:amenable-main} to $f$ to obtain a subgraph
$Z$ of the function graph $X_{f}$, a set $V_{1}$ and a homomorphism
$g\colon\Gamma\rightarrow\Sym\left(V_{1}\right)$, such that (i)-(iv)
of Proposition \ref{prop:amenable-main} are satisfied. In particular,
there is an embedding $\varphi\colon V\left(Z\right)\rightarrow V_{1}$
of the $\Gamma$-graph $Z$ in the function graph $X_{g}$. Let $N=\left|V_{1}\right|+\left(n-\left|V\left(Z\right)\right|\right)$.
Then
\[
n\leq N\leq\left(1+\left(78+96\right)\delta_{1}\right)n=\left(1+174\delta_{1}\right)n\,\,\text{.}
\]
Assume without loss of generality that $V_{1}=V\left(Z\right)\coprod\left(\left[N\right]\setminus\left[n\right]\right)$
and that $\varphi$ is the inclusion map. For every $\gamma\in\Gamma$,
define $h\left(\gamma\right)\in\Sym\left(N\right)$ by
\[
h\left(\gamma\right)\left(x\right)=\begin{cases}
g\left(\gamma\right)\left(x\right) & x\in V_{1}\\
x & x\in\left[n\right]\setminus V\left(Z\right)
\end{cases}\qquad\forall x\in\left[N\right]\,\,\text{.}
\]
Let $\gamma\in\Gamma$ and $x\in D_{Z}\left(\gamma\right)$. Then
$g\left(\gamma\right)\left(x\right)=f\left(\gamma\right)\left(x\right)$
since the inclusion map $\varphi\colon V\left(Z\right)\rightarrow V_{1}$
is an embedding of the $\Gamma$-graph $Z$ in $X_{g}$. On the other
hand, $h\left(\gamma\right)\left(x\right)=g\left(\gamma\right)\left(x\right)$
since $x\in V\left(Z\right)\subset V_{1}$. Hence, for every $\gamma_0\in \Gamma$, 
\begin{align}
d^{H}\left(h\left(\gamma_0\right),f\left(\gamma_0\right)\right) & \leq\frac{1}{N}\left(\left(n-\left|D_{Z}\left(\gamma_0\right)\right|\right)+\left(N-n\right)\right)\nonumber \\
 & =1-\frac{n}{N}\frac{\left|D_{Z}\left(\gamma_0\right)\right|}{n}\nonumber \\
 & \leq1-\frac{1}{1+174\delta_{1}}\cdot\frac{\left|D_{Z}\left(\gamma_{0}\right)\right|}{n}\,\,\text{.}\label{eq:fct-from-graph-1}
\end{align}
Hence,
\begin{align*}
d_{\infty}\left(h,f\right) & \leq1-\frac{1-117\delta_{\infty}}{1+174\delta_{\infty}} & \text{by (\ref{eq:fct-from-graph-1}) and (\ref{eq:X0-strong-infty})}\\
 & \leq1-\left(1-174\delta_{\infty}\right)\left(1-117\delta_{\infty}\right)\\
 & \leq291\delta_{\infty}
\end{align*}
and
\begin{align*}
d_{1}\left(h,f\right) & \leq\int\left(1-\frac{1}{1+174\delta_{1}}\cdot\frac{\left|D_{Z}\left(\gamma\right)\right|}{n}\right)\dm\left(\gamma\right) & \text{by (\ref{eq:fct-from-graph-1})}\\
 & \leq1-\left(1-174\delta_{1}\right)\left(1-117\delta_{1}\right) & \text{by (\ref{eq:X0-strong-L1})}\\
 & \leq291\delta_{1}\,\,\text{.}
\end{align*}
\end{proof}
\ \\

\subsection{Proof of Proposition \ref{prop:amenable-main}}

Fix $n\in\NN$ and a symmetric function $f\colon\Gamma\rightarrow\Sym\left(n\right)$.
Write $\delta_{\infty}=\defect_{\infty}\left(f\right)$ and $\delta_{1}=\defect_{1}\left(f\right)$.
We first construct the subgraph $Z$ of the function graph $X_{f}$.
Then, we proceed to define the set $V_{1}$ and the homomorphism $g\colon\Gamma\rightarrow\Sym\left(V_{1}\right)$.
Finally, we show that (i)-(iv) of Proposition \ref{prop:amenable-main}
are satisfied.\\

\subsubsection{The construction of the subgraph $Z$}

We begin by assigning a weight in the range $\left[0,1\right]$ to
each edge $x\overset{\gamma}{\rightedge}y\in\left[n\right]\times\Gamma\times\left[n\right]$
(regardless of whether or not the edge belongs to $X_{f}$).
\begin{defn}
\label{def:Graph_theoretic_definitions}The \emph{set of supporters}
of an edge $x\overset{\gamma}{\rightedge}y\in\left[n\right]\times\Gamma\times\left[n\right]$
is 
\[
T\left(x\overset{\gamma}{\rightedge}y\right)=\left\{ t\in\Gamma\mid f\left(t\right)f\left(t^{-1}\gamma\right)\left(x\right)=y\right\} 
\]
and the \emph{weight} of this edge is 
\[
w\left(x\overset{\gamma}{\rightedge}y\right)=m\left(T\left(x\overset{\gamma}{\rightedge}y\right)\right)\,\,\text{.}
\]
\end{defn}

Note that $T\left(x\overset{\gamma}{\rightedge}y\right)$ consists
of all elements $t\in\Gamma$ such that the path $x\overset{t^{-1}\gamma}{\rightedge}\overset{t}{\rightedge}$
in the function graph $X_{f}$ ends at $y$. Recall that our eventual
goal is to find a homomorphism $g$ near $f$. One may think of the
weight $w\left(x\overset{\gamma}{\rightedge}y\right)$ as the (normalized)
result of a vote, taken among the elements of $\Gamma$, on whether
the permutation $g\left(\gamma\right)$ should send $x$ to $y$.
Intuitively, if $\delta_{\infty}$ (or $\delta_{1}$) is small then
almost all edges of $X_{f}$ have high weight. Below, we consider
subgraphs of $X_{f}$ that include only the high-weight edges. In
Section \ref{subsec:they-are-groupoids}, we prove that two of these
subgraphs admit an algebraic structure. In Section \ref{subsec:amenable-construct-g}
the algebraic structure gives rise to the sought-after homomorphism
$g$.

For a $\Gamma$-graph $X=\left(V,E\right)$ and a subset $V'\subset V$,
the \emph{induced subgraph} of $X$ on $V'$ is $X'=\left(V',E'\right)$,
where $E'=E\cap\left(V'\times\Gamma\times V'\right)$. We say that
a subgraph $X''$ of $X$ is \emph{induced} if there is a subset $V''\subset V$
such that $X''$ is the induced subgraph of $X$ on $V''$.
{
\begin{defn}
\label{def:from-X_f-to-X_0}~\renewcommand{\labelenumi}{\roman{enumi})}
\begin{enumerate}
\item For $\eps>0$, let $X_{\eps}$ be the subgraph
of $X_{f}$ with vertex set $[n]$  and edge set
\[
E\left(X_{\eps}\right)=\left\{ x\overset{\gamma}{\rightedge}y\in E\left(X_{f}\right)\mid w\left(x\overset{\gamma}{\rightedge}y\right)>1-\eps\right\} \,\,\text{.}
\]
\item For $0<\eps\leq1/6$, let $Y_{\eps}$ be the induced subgraph of $X_{2\eps}$
on the following vertex set:
\[
V\left(Y_{\eps}\right)=\left\{ x\in\left[n\right]\mid\deg_{X_{\eps}}\left(x\right)>2/3\right\} \,\,\text{.}
\]
Explicitly, the edge set $E\left(Y_{\eps}\right)$ consists of all
edges $x\overset{\gamma}{\rightedge}f\left(\gamma\right)\left(x\right)$
for $x\in\left[n\right]$ and $\gamma\in\Gamma$ such that $\deg_{X_{\eps}}\left(x\right)>2/3$,
$\deg_{X_{\eps}}\left(f\left(\gamma\right)\left(x\right)\right)>2/3$
and $w\left(x\overset{\gamma}{\rightedge}y\right)>1-2\eps$.
\item For $0<\eps\leq1/6$, let $Z_{\eps}$ be the induced subgraph of $Y_{\eps}$
on the following vertex set: 
\[
V\left(Z_{\eps}\right)=\left\{ x\in V\left(Y_{\eps}\right)\mid\deg_{Y_{\eps}}\left(x\right)\geq1/2\right\} \,\,\text{.}
\]
\item Finally, set $Z=Z_{1/6}$.
\end{enumerate}
\end{defn}
}
Note that in the definition of $Y_{\eps}$ we use $\deg_{X_{\eps}}$
to filter out the low-degree vertices of $X_{2\eps}$. The interplay
between $\eps$ and $2\eps$ is crucial in our proof in the next section
that $Y_{\eps}$ and $Z_{\eps}$ are well structured. In regard to
the definition of $Z_{\eps}$, in Section \ref{subsec:amenable-construct-g}
we shall see that the degrees in $Y_{\eps}$ are constant within each
connected component, and thus $Z_{\eps}$ is a union of components
of $Y_{\eps}$.
\ \\
\subsubsection{\label{subsec:they-are-groupoids}The $\Gamma$-graphs $Y_{\protect\eps}$
and $Z_{\protect\eps}$ are $\Gamma$-groupoids}
\begin{defn}
\label{def:groupoid}A $\Gamma$-graph $X=\left(V,E\right)$ is a
\emph{$\Gamma$-groupoid} if the following conditions hold:\renewcommand{\labelenumi}{\roman{enumi})}
\begin{enumerate}
\item \textbf{Symmetry:} for every edge $x\overset{\gamma}{\rightedge}y\in E$,
we have $y\overset{\gamma^{-1}}{\rightedge}x\in E$.
\item \textbf{Triangles:} for all $x,y,z\in V$ and $\gamma_{1},\gamma_{2}\in\Gamma$,
if $x\overset{\gamma_{1}}{\rightedge}y\in E$ and $y\overset{\gamma_{2}}{\rightedge}z\in E$
then $x\overset{\gamma_{2}\gamma_{1}}{\rightedge}z\in E$.
\end{enumerate}
\end{defn}

If $h\colon\Gamma\rightarrow\Sym\left(n\right)$ is a homomorphism
then the function graph $X_{h}$ is a $\Gamma$-groupoid, usually
referred to as an \emph{action groupoid. }In Section \ref{subsec:amenable-construct-g},
we investigate general properties of $\Gamma$-groupoids. Here, we
prove that $Y_{\eps}$ and $Z_{\eps}$ are $\Gamma$-groupoids whenever
$\eps\leq1/6$.

Our assumption that $f$ is symmetric comes into play in the next
lemma, which will be used in the sequel without reference.
\pagebreak[2]
\begin{lem}
\label{lem:inverse-edge-weight}Consider an edge $x\overset{\gamma}{\rightedge}y\in\left[n\right]\times\Gamma\times\left[n\right]$.
Then $w\left(x\overset{\gamma}{\rightedge}y\right)=w\left(y\overset{\gamma^{-1}}{\rightedge}x\right)$.
\end{lem}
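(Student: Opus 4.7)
The plan is to show a bijective correspondence between the supporter sets $T(x\overset{\gamma}{\rightedge}y)$ and $T(y\overset{\gamma^{-1}}{\rightedge}x)$ that differs only by left-multiplication in $\Gamma$, and then invoke the left-invariance of $m$ to conclude equality of weights. The symmetry of $f$ is what lets us manipulate inverses inside $f$ without cost.

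More concretely, suppose $t\in T(x\overset{\gamma}{\rightedge}y)$, i.e.\ $f(t)f(t^{-1}\gamma)(x)=y$. Applying $f(t^{-1}\gamma)^{-1}f(t)^{-1}$ to both sides and using symmetry of $f$ to rewrite $f(t)^{-1}=f(t^{-1})$ and $f(t^{-1}\gamma)^{-1}=f(\gamma^{-1}t)$, I get
\[
x \;=\; f(\gamma^{-1}t)\,f(t^{-1})(y).
\]
Now I substitute $s=\gamma^{-1}t$, so that $t^{-1}=s^{-1}\gamma^{-1}$, turning the identity into
\[
f(s)\,f(s^{-1}\gamma^{-1})(y) \;=\; x,
\]
which is precisely the condition $s\in T(y\overset{\gamma^{-1}}{\rightedge}x)$. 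The same computation is reversible, so the map $t\mapsto\gamma^{-1}t$ is a bijection from $T(x\overset{\gamma}{\rightedge}y)$ onto $T(y\overset{\gamma^{-1}}{\rightedge}x)$; equivalently, $T(y\overset{\gamma^{-1}}{\rightedge}x)=\gamma^{-1}T(x\overset{\gamma}{\rightedge}y)$.

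Finally, since $m$ is left-invariant,
\[
w(y\overset{\gamma^{-1}}{\rightedge}x)=m(\gamma^{-1}T(x\overset{\gamma}{\rightedge}y))=m(T(x\overset{\gamma}{\rightedge}y))=w(x\overset{\gamma}{\rightedge}y).
\]
There is no serious obstacle; the only thing to be careful about is applying the symmetry of $f$ exactly where needed (to identify $f(t^{-1})$ with $f(t)^{-1}$ and $f(\gamma^{-1}t)$ with $f(t^{-1}\gamma)^{-1}$), and then picking the substitution $s=\gamma^{-1}t$ that realizes the shift by a single group element so that left-invariance of $m$ applies cleanly.
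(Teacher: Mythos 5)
Your proof is correct and follows essentially the same route as the paper's: both establish the identity $T(y\overset{\gamma^{-1}}{\rightedge}x)=\gamma^{-1}T(x\overset{\gamma}{\rightedge}y)$ via the substitution $t\mapsto\gamma^{-1}t$ (using symmetry of $f$ to turn the defining equation around) and then invoke left-invariance of $m$. You have merely spelled out the intermediate manipulation in more detail than the paper's compressed chain of equivalences.
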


\begin{proof}
Let $t\in\Gamma$. Then $t\in T\left(x\overset{\gamma}{\rightedge}y\right)$
if and only if $f\left(t\right)f\left(t^{-1}\gamma\right)\left(x\right)=y$
if and only if $\text{\ensuremath{x=f\left(\gamma^{-1}t\right)f\left(\left(\gamma^{-1}t\right)^{-1}\gamma^{-1}\right)\left(y\right)} }$if
and only if $\gamma^{-1}t\in T\left(y\overset{\gamma^{-1}}{\rightedge}x\right)$.
Thus $\gamma^{-1}T\left(x\overset{\gamma}{\rightedge}y\right)=T\left(y\overset{\gamma^{-1}}{\rightedge}x\right)$,
and since $m$ is left invariant we conclude that $w\left(x\overset{\gamma}{\rightedge}y\right)=w\left(y\overset{\gamma^{-1}}{\rightedge}x\right)$.
\end{proof}
\begin{lem}
\label{lem:three-strong-edges}Consider a triplet of edges $x\overset{\gamma_{1}}{\rightedge}y$,
$y\overset{\gamma_{2}}{\rightedge}z$ and $x\overset{\gamma_{2}\gamma_{1}}{\rightedge}u$
in $\left[n\right]\times\Gamma\times\left[n\right]$, each of weight
larger than $2/3$. Then $z=u$.
\end{lem}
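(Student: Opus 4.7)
The plan is to find a single element $t\in\Gamma$ that simultaneously witnesses all three edge-weight inequalities, rewritten in a common parametrization, and then read off $z=u$ from the resulting three equations.

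Concretely, I first reparametrize each supporter set. The first edge already gives
\[
A_1 \;=\; T\bigl(x\overset{\gamma_1}{\rightedge}y\bigr)
\;=\;\bigl\{t\in\Gamma \,\big|\, f(t)\,f(t^{-1}\gamma_1)(x)=y\bigr\}\,\,\text{.}
\]
For the second edge, starting from $T(y\overset{\gamma_2}{\rightedge}z)=\{s:f(s)f(s^{-1}\gamma_2)(y)=z\}$, I substitute $s=\gamma_2 t$ so that $s^{-1}\gamma_2=t^{-1}$, producing
\[
A_2 \;=\; \bigl\{t\in\Gamma \,\big|\, f(\gamma_2 t)\,f(t^{-1})(y)=z\bigr\}\,\,\text{.}
\]
For the third edge, starting from $T(x\overset{\gamma_2\gamma_1}{\rightedge}u)=\{r:f(r)f(r^{-1}\gamma_2\gamma_1)(x)=u\}$, I substitute $r=\gamma_2 t$ so that $r^{-1}\gamma_2\gamma_1=t^{-1}\gamma_1$, producing
\[
A_3 \;=\; \bigl\{t\in\Gamma \,\big|\, f(\gamma_2 t)\,f(t^{-1}\gamma_1)(x)=u\bigr\}\,\,\text{.}
\]
Since $m$ is left invariant, the substitutions preserve measure, so $m(A_1),m(A_2),m(A_3)>2/3$.

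The sum $m(A_1)+m(A_2)+m(A_3)>2$ forces $m(A_1\cap A_2\cap A_3)>0$ by finite additivity (writing $m(A_i^c)<1/3$ and applying subadditivity to the complements), hence $A_1\cap A_2\cap A_3$ is nonempty. Pick any $t$ in this intersection. Symmetry of $f$ (Definition \ref{def:symmetric-function}) gives $f(t^{-1})=f(t)^{-1}$, so $t\in A_1$ rewrites as $f(t^{-1}\gamma_1)(x)=f(t^{-1})(y)$. Substituting this into the relation coming from $t\in A_2$ yields $f(\gamma_2 t)\,f(t^{-1}\gamma_1)(x)=z$, while $t\in A_3$ says $f(\gamma_2 t)\,f(t^{-1}\gamma_1)(x)=u$. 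Comparing the two gives $z=u$.

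The one substantive step is the measure-combinatorial one: recognizing that the three edge conditions can be aligned into a single variable $t$ via the shifts $t\mapsto \gamma_2 t$, after which left invariance of $m$ does all the work and the rest is bookkeeping with the symmetry of $f$. I do not expect any serious obstacle beyond choosing the correct parametrization for $A_2$ and $A_3$.
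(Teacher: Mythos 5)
Your proof is correct and is essentially the same argument as the paper's: your sets $A_1$, $A_2$, $A_3$ are exactly $T(x\overset{\gamma_1}{\rightedge}y)$, $\gamma_2^{-1}T(y\overset{\gamma_2}{\rightedge}z)$, $\gamma_2^{-1}T(x\overset{\gamma_2\gamma_1}{\rightedge}u)$, which is precisely the intersection $Q$ the paper uses, and the ensuing chain of substitutions using symmetry of $f$ matches the paper's computation. The only cosmetic difference is that you explain the measure pigeonhole via complements rather than leaving it implicit.
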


\begin{proof}
Let $Q=T\left(x\overset{\gamma_{1}}{\rightedge}y\right)\cap\gamma_{2}^{-1}T\left(y\overset{\gamma_{2}}{\rightedge}z\right)\cap\gamma_{2}^{-1}T\left(x\overset{\gamma_{2}\gamma_{1}}{\rightedge}u\right)$.
Each set in the intersection has measure larger than $2/3$, so there
exists an element $t$ in $Q$. Hence,
\begin{align*}
z & =f\left(\gamma_{2}t\right)f\left(\left(\gamma_{2}t\right)^{-1}\gamma_{2}\right)\left(y\right) & \gamma_{2}t\in T\left(y\overset{\gamma_{2}}{\rightedge}z\right)\\
 & =f\left(\gamma_{2}t\right)f\left(t^{-1}\right)\left(y\right)\\
 & =f\left(\gamma_{2}t\right)f\left(t^{-1}\right)f\left(t\right)f\left(t^{-1}\gamma_{1}\right)\left(x\right) & t\in T\left(x\overset{\gamma_{1}}{\rightedge}y\right)\\
 & =f\left(\gamma_{2}t\right)f\left(t^{-1}\gamma_{1}\right)\left(x\right) & \text{\ensuremath{f} is symmetric}\\
 & =f\left(\gamma_{2}t\right)f\left(\left(\gamma_{2}t\right)^{-1}\gamma_{2}\gamma_{1}\right)\left(x\right)\\
 & =u\,\,\text{.} & \gamma_{2}t\in T\left(x\overset{\gamma_{2}\gamma_{1}}{\rightedge}u\right)
\end{align*}

\end{proof}
\begin{lem}
\label{lem:complete-the-triangle}Consider $\eps_{1},\eps_{2}>0$
and a pair of edges $x\overset{\gamma_{1}}{\rightedge}y$ and $y\overset{\gamma_{2}}{\rightedge}z$
in $\left[n\right]\times\Gamma\times\left[n\right]$, such that $w\left(x\overset{\gamma_{1}}{\rightedge}y\right)>1-\eps_{1}$
and $w\left(y\overset{\gamma_{2}}{\rightedge}z\right)>1-\eps_{2}$.
Then $w\left(x\overset{\gamma_{2}\gamma_{1}}{\rightedge}z\right)>1-\eps_{1}-\eps_{2}$.
\end{lem}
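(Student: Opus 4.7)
The plan is to mimic the proof of Lemma \ref{lem:three-strong-edges}, but drop the third supporter set and instead extract the desired support set from the intersection of the first two (suitably translated). The key observation is that a \emph{single} element $t \in \Gamma$ that simultaneously supports the edges $x \overset{\gamma_{1}}{\rightedge} y$ and $y \overset{\gamma_{2}}{\rightedge} z$ (in the appropriate sense) forces $\gamma_{2}t$ to support the composed edge $x \overset{\gamma_{2}\gamma_{1}}{\rightedge} z$.

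More precisely, I would first verify the inclusion
\[
T\bigl(x \overset{\gamma_{1}}{\rightedge} y\bigr) \cap \gamma_{2}^{-1}\,T\bigl(y \overset{\gamma_{2}}{\rightedge} z\bigr) \,\subseteq\, \gamma_{2}^{-1}\, T\bigl(x \overset{\gamma_{2}\gamma_{1}}{\rightedge} z\bigr).
\]
Indeed, given $t$ in the intersection, the symmetry of $f$ yields $f(t^{-1}\gamma_{1})(x) = f(t)^{-1}(y) = f(t^{-1})(y)$, and then the telescoping computation
\[
f(\gamma_{2} t)\, f\bigl((\gamma_{2} t)^{-1}\gamma_{2}\gamma_{1}\bigr)(x) = f(\gamma_{2} t)\, f(t^{-1}\gamma_{1})(x) = f(\gamma_{2} t)\, f(t^{-1})(y) = f(\gamma_{2} t)\, f\bigl((\gamma_{2} t)^{-1}\gamma_{2}\bigr)(y) = z,
\]
where the last equality uses that $\gamma_{2}t \in T(y \overset{\gamma_{2}}{\rightedge} z)$.

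Once the inclusion is in place, the conclusion is a standard measure-theoretic estimate. By left invariance of $m$, both $m\bigl(T(x \overset{\gamma_{1}}{\rightedge} y)\bigr) > 1 - \eps_{1}$ and $m\bigl(\gamma_{2}^{-1}\,T(y \overset{\gamma_{2}}{\rightedge} z)\bigr) > 1 - \eps_{2}$, so their intersection has measure greater than $1 - \eps_{1} - \eps_{2}$. Applying $m$ to the inclusion above and using left invariance one more time gives $w\bigl(x \overset{\gamma_{2}\gamma_{1}}{\rightedge} z\bigr) > 1 - \eps_{1} - \eps_{2}$.

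There is no serious obstacle here: the only subtle point is the correct bookkeeping of the shift $\gamma_{2}$ on the second supporter set (so that both conditions on $t$ refer to the same group element rather than two unrelated ones), and the use of symmetry of $f$ to rewrite $f(t^{-1}\gamma_{1})(x)$ in a form that chains with the second edge. This is essentially the argument of Lemma \ref{lem:three-strong-edges} with the third constraint removed and with the triple intersection replaced by a pairwise intersection, allowing the $2/3$ threshold to be weakened to $1 - \eps_{1}$ and $1 - \eps_{2}$ at the cost of only concluding a measure bound rather than a pointwise identification.
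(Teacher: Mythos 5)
Your proposal is correct and is essentially the paper's own proof: the paper sets $Q = \gamma_2 T(x\overset{\gamma_1}{\rightedge}y) \cap T(y\overset{\gamma_2}{\rightedge}z)$, shows $Q \subseteq T(x\overset{\gamma_2\gamma_1}{\rightedge}z)$ via the same telescoping computation (inserting $f(t^{-1}\gamma_2)f(\gamma_2^{-1}t)=\id$ by symmetry), and concludes by the same measure estimate; your version is identical up to a global left-translation by $\gamma_2^{-1}$ of the intersecting set.
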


\begin{proof}
Let $Q=\gamma_{2}T\left(x\overset{\gamma_{1}}{\rightedge}y\right)\cap T\left(y\overset{\gamma_{2}}{\rightedge}z\right)$.
Then $m\left(Q\right)>1-\eps_{1}-\eps_{2}$, and thus it suffices
to show that $Q\subset T\left(x\overset{\gamma_{2}\gamma_{1}}{\rightedge}z\right)$.
Indeed, if $t\in Q$ then $t\in T\left(x\overset{\gamma_{2}\gamma_{1}}{\rightedge}z\right)$
because 
\begin{align*}
f\left(t\right)f\left(t^{-1}\gamma_{2}\gamma_{1}\right)\left(x\right) & =f\left(t\right)f\left(t^{-1}\gamma_{2}\right)f\left(\gamma_{2}^{-1}t\right)f\left(t^{-1}\gamma_{2}\gamma_{1}\right)\left(x\right) & \text{\ensuremath{f} is symmetric}\\
 & =f\left(t\right)f\left(t^{-1}\gamma_{2}\right)\left(y\right) & \gamma_{2}^{-1}t\in T\left(x\overset{\gamma_{1}}{\rightedge}y\right)\\
 & =z\,\,\text{.} & t\in T\left(y\overset{\gamma_{2}}{\rightedge}z\right)
\end{align*}
\end{proof}
The above lemma shows that the composition of two high-weight edges
results in a high-weight edge, but with some decrease in weight. This
deterioration makes it difficult to grow a large $\Gamma$-groupoid
edge by edge inside $X_{2\eps}$ . This difficulty is addressed by
the following lemma, which motivates the distinction between $\eps$
and $2\eps$ in the definition of $Y_{\eps}$.

\begin{lem}
\label{lem:good-compositions}Let $0<\eps\leq1/6$ and consider a
pair of edges $x\overset{\gamma_{1}}{\rightedge}y$ and $y\overset{\gamma_{2}}{\rightedge}z$
in $\left[n\right]\times\Gamma\times\left[n\right]$, such that

\begin{align}
\deg_{X_{\eps}}\left(x\right),\deg_{X_{\eps}}\left(y\right),\deg_{X_{\eps}}\left(z\right) & >2/3\,\,\text{,}\nonumber \\
w\left(x\overset{\gamma_{1}}{\rightedge}y\right) & >1-2\eps\,\,\text{and}\label{eq:good-compositions-1}\\
w\left(y\overset{\gamma_{2}}{\rightedge}z\right) & >1-2\eps\,\,\text{.}\label{eq:good-compositions-2}
\end{align}
Then $w\left(x\overset{\gamma_{2}\gamma_{1}}{\rightedge}z\right)>1-2\eps$.
\end{lem}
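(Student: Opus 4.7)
My plan starts from the baseline bound $w(x\overset{\gamma_{2}\gamma_{1}}{\rightedge}z)>1-4\eps$, which is what a single application of Lemma \ref{lem:complete-the-triangle} to the hypothesis edges yields. This falls short of the desired $1-2\eps$ by a factor of $2\eps$, so a more delicate argument is required. My strategy is to replace the intermediate vertex $y$ by a better-chosen intermediate $v=f(g)(x)$, where $g\in\Gamma$ is picked so that \emph{both} new edges $x\overset{g}{\rightedge}v$ and $v\overset{\gamma_{2}\gamma_{1}g^{-1}}{\rightedge}z$ lie in $X_{\eps}$ (weight $>1-\eps$). A second application of Lemma \ref{lem:complete-the-triangle} to this rerouted path would then produce $w(x\overset{\gamma_{2}\gamma_{1}}{\rightedge}z)>2(1-\eps)-1=1-2\eps$, which is exactly what we need.

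The next step is to find such a $g$. Setting $B_{x}=\OE_{X_{\eps}}(x)$ and $B_{z}=\OE_{X_{\eps}}(z)$, the first condition demands $g\in B_{x}$, a set of $m$-measure greater than $2/3$. For the second condition, Lemma \ref{lem:inverse-edge-weight} rewrites the edge $v\overset{\gamma_{2}\gamma_{1}g^{-1}}{\rightedge}z$ as $z\overset{g\gamma_{1}^{-1}\gamma_{2}^{-1}}{\rightedge}v$, which lies in $X_{\eps}$ precisely when $g\gamma_{1}^{-1}\gamma_{2}^{-1}\in B_{z}$ (a set of measure $>2/3$ by bi-invariance of $m$) and the coincidence $f(g)(x)=f(g\gamma_{1}^{-1}\gamma_{2}^{-1})(z)$ holds. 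Substituting $t=\gamma_{2}\gamma_{1}g^{-1}$ and using that $f$ is symmetric, a short computation shows this coincidence is equivalent to $t\in T(x\overset{\gamma_{2}\gamma_{1}}{\rightedge}z)$; hence the set of qualifying $g$ has measure equal to $w(x\overset{\gamma_{2}\gamma_{1}}{\rightedge}z)$, which is greater than $1-4\eps$ by the baseline bound.

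The hardest step is proving that the three conditions on $g$ can be met simultaneously. A naïve inclusion-exclusion lower-bounds the joint measure by $\tfrac{2}{3}+\tfrac{2}{3}+(1-4\eps)-2=\tfrac{1}{3}-4\eps$, which is positive only when $\eps<1/12$ and so does not cover the full range $\eps\le1/6$ claimed in the lemma. To close this gap, I would exploit Lemma \ref{lem:three-strong-edges}: since $1-2\eps\ge2/3$, any destination $u$ with $w(x\overset{\gamma_{2}\gamma_{1}}{\rightedge}u)>2/3$ is forced to equal $z$, which sharply restricts how weight can be distributed among destinations other than $z$. Combined with a bootstrap in which the initial bound $w>1-4\eps$ is fed back into the measure of the coincidence set and the process is iterated, this uniqueness should allow one to push the estimate up to $1-2\eps$ throughout $0<\eps\le1/6$. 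Engineering this bootstrap so that each iteration strictly improves the bound (rather than stalling at an intermediate threshold) is where I expect the main technical difficulty to reside.
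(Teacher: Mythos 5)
The core intuition—reroute the path $x\to z$ through a better-chosen intermediate $v$ whose two incident edges both have weight $>1-\eps$ so that composing them loses only $\eps+\eps=2\eps$—is exactly the paper's, and the target identity $f(g)(x)=f(g\gamma_1^{-1}\gamma_2^{-1})(z)$ is correctly translated into $\gamma_2\gamma_1 g^{-1}\in T(x\overset{\gamma_2\gamma_1}{\rightedge}z)$. But your mechanism for \emph{locating} such a $g$ is circular: the three conditions on $g$ are membership in $\OE_{X_\eps}(x)$ (measure $>2/3$), in $\OE_{X_\eps}(z)\gamma_2\gamma_1$ (measure $>2/3$), and in a left-translate of $T(x\overset{\gamma_2\gamma_1}{\rightedge}z)^{-1}$, whose measure is \emph{exactly} the quantity $w(x\overset{\gamma_2\gamma_1}{\rightedge}z)$ you are trying to lower-bound. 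Since a priori you only have $w>1-4\eps$, which at $\eps=1/6$ can be barely above $1/3$, inclusion-exclusion gives a joint measure bound $>w-2/3$ that fails to be positive throughout the allowed range of $\eps$, so the rerouting cannot even begin for $\eps\in(1/12,1/6]$. The bootstrap cannot start from nothing: once the intersection may be empty, no improved estimate is produced, and the uniqueness from Lemma \ref{lem:three-strong-edges} (which only applies once some $u$ already has weight $>2/3$) gives no handle when all weights are spread below that threshold.

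The missing idea—and the reason the lemma's hypotheses list all three degrees—is that you never used $\deg_{X_\eps}(y)>2/3$. The paper's proof takes $Q=\bigl(\OE_{X_\eps}(x)\gamma_1^{-1}\gamma_2^{-1}\bigr)\cap\bigl(\OE_{X_\eps}(y)\gamma_2^{-1}\bigr)\cap\OE_{X_\eps}(z)$, an intersection of three \emph{degree} sets each of measure $>2/3$, so it is nonempty unconditionally; picking $\alpha\in Q$ gives three edges of weight $>1-\eps$ emanating from $x$, $y$, $z$ with labels $\alpha\gamma_2\gamma_1$, $\alpha\gamma_2$, $\alpha$ respectively. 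The coincidence $f(\alpha\gamma_2\gamma_1)(x)=f(\alpha)(z)=:u$ is then \emph{derived}, via two applications of Lemma \ref{lem:three-strong-edges} along the original path $x\overset{\gamma_1}{\rightedge}y\overset{\gamma_2}{\rightedge}z$, rather than imposed as a constraint on $g$. Finally Lemma \ref{lem:complete-the-triangle} applied to $x\overset{\alpha\gamma_2\gamma_1}{\rightedge}u\overset{\alpha^{-1}}{\rightedge}z$ produces $w(x\overset{\gamma_2\gamma_1}{\rightedge}z)>1-2\eps$, with no circularity. Your attempt could be repaired by replacing your third constraint with this degree condition at $y$ and then deducing the coincidence afterwards, but as written the argument has a genuine gap.
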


\begin{proof}
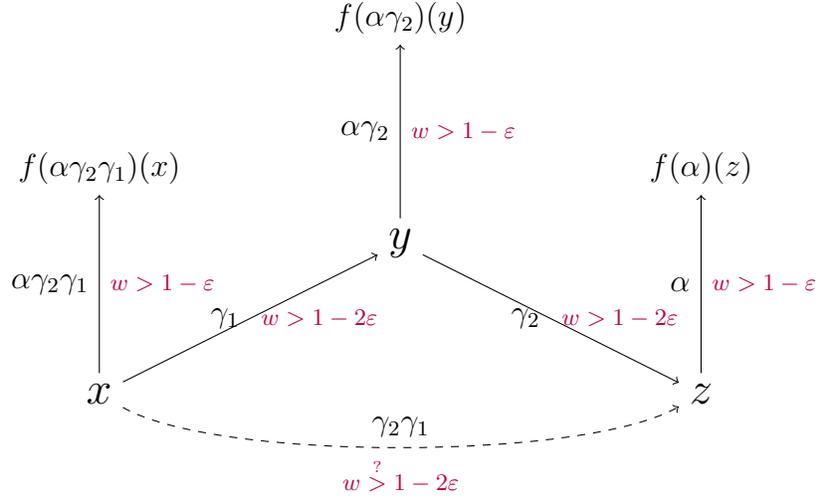
\begin{figure}
	\begin{centering}
		\begin{tikzpicture}[scale=2]
		\node[draw=none, color=black] (x) at (-2,0) {\Large ${x}$};
		\node[draw=none, color=black] (z) at (2,0) {\Large ${z}$};
		\node[draw=none, color=black] (y) at (0,1) {\Large ${y}$};
		\node[draw=none, color=black] (agg) at (-2,1.5) { $f(\alpha\gamma_2\gamma_1)(x) $};
		\node[draw=none, color=black] (ag) at (0,2.5) { $f(\alpha\gamma_2)(y) $};
		\node[draw=none, color=black] (a) at (2,1.5) { $f(\alpha)(z) $};

		\draw[black,->, solid] (x) to  
		node[midway,left]{$\gamma_1$}node[midway,right]{\scriptsize \color{purple}$w>1-2\varepsilon$}  (y)  ; 
		\draw[black,->, solid] (y) to  
		node[midway,left]{$\gamma_2$} node[midway,right]{\scriptsize \color{purple} $w>1-2\varepsilon$}  (z); 
		\draw[black,->, dashed] (x) to  [out=-30,in=210, looseness=0.5]
		node[midway,above]{\color{black} $\gamma_2\gamma_1$} node[midway,below]{\scriptsize \color{purple}$w\overset{?}{>}1-2\varepsilon$} (z)  ; 
		
		\draw[black,->, solid] (x) to  
		node[midway,left]{$\alpha\gamma_2\gamma_1$}node[midway,right]{\scriptsize \color{purple}$w>1-\varepsilon$}  (agg)  ; 
		\draw[black,->, solid] (y) to  
		node[midway,left]{$\alpha\gamma_2$}node[midway,right]{\scriptsize \color{purple}$w>1-\varepsilon$}  (ag)  ; 
		\draw[black,->, solid] (z) to  
		node[midway,left]{$\alpha$}node[midway,right]{\scriptsize \color{purple}$w>1-\varepsilon$}  (a)  ;

		\end{tikzpicture}	
	\end{centering}
	
	\caption{
		The high degrees of $x$, $y$ and $z$ guarantee the existence of
		$\alpha\in\Gamma$ such that the three vertical edges have weight larger
		than $1-\eps$. Then, Lemma \ref{lem:three-strong-edges} ensures that
		the three vertices on top are the same vertex $u$.
		Finally, Lemma \ref{lem:complete-the-triangle}, applied to the path
		$x\overset{\alpha\gamma_{2}\gamma_{1}}{\rightedge}u
		\overset{\alpha^{-1}}{\rightedge}z$,
		implies that the bottom edge has weight larger than $1-2\eps$.
	}
	\label{fig:pigeonhole-groupoid}
\end{figure}
The proof is illustrated in Figure \ref{fig:pigeonhole-groupoid}.
Let $Q=\left({\rm \OE}_{X_{\eps}}\left(x\right)\gamma_{1}^{-1}\gamma_{2}^{-1}\right)\cap\left(\OE_{X_{\eps}}\left(y\right)\gamma_{2}^{-1}\right)\cap{\rm \OE}_{X_{\eps}}\left(z\right)$.
Each set in the intersection has measure larger than $2/3$, so there
exists an element $\alpha$ in $Q$. Then,
\begin{align}
w\left(x\overset{\alpha\gamma_{2}\gamma_{1}}{\rightedge}f\left(\alpha\gamma_{2}\gamma_{1}\right)\left(x\right)\right) & >1-\eps\,\,\text{,}\label{eq:nondeterioration-1}\\
w\left(y\overset{\alpha\gamma_{2}}{\rightedge}f\left(\alpha\gamma_{2}\right)\left(y\right)\right) & >1-\eps\,\,\text{and}\label{eq:non-deterioration-2}\\
w\left(z\overset{\alpha}{\rightedge}f\left(\alpha\right)\left(z\right)\right) & >1-\eps\,\,\text{.}\label{eq:non-deterioraton-3}
\end{align}
Lemma \ref{lem:three-strong-edges} applies to the triplet of edges
$x\overset{\gamma_{1}}{\rightedge}y$, $y\overset{\alpha\gamma_{2}}{\rightedge}f\left(\alpha\gamma_{2}\right)\left(y\right)$
and $x\overset{\alpha\gamma_{2}\gamma_{1}}{\rightedge}f\left(\alpha\gamma_{2}\gamma_{1}\right)\left(x\right)$
since $\eps\leq1/6$ and by virtue of (\ref{eq:good-compositions-1}),
(\ref{eq:non-deterioration-2}) and (\ref{eq:nondeterioration-1}),
and thus $f\left(\alpha\gamma_{2}\right)\left(y\right)=f\left(\alpha\gamma_{2}\gamma_{1}\right)\left(x\right)$.
Similarly, the lemma applies to the triplet of edges $y\overset{\gamma_{2}}{\rightedge}z$,
$z\overset{\alpha}{\rightedge}f\left(\alpha\right)\left(z\right)$
and $y\overset{\alpha\gamma_{2}}{\rightedge}f\left(\alpha\gamma_{2}\right)\left(y\right)$
due to (\ref{eq:good-compositions-2}), (\ref{eq:non-deterioraton-3})
and (\ref{eq:non-deterioration-2}), and thus $f\left(\alpha\right)\left(z\right)=f\left(\alpha\gamma_{2}\right)\left(y\right)$.
Hence 
\begin{equation}
f\left(\alpha\gamma_{2}\gamma_{1}\right)\left(x\right)=f\left(\alpha\right)\left(z\right)\,\,\text{.}\label{eq:good-composition-same-vertex}
\end{equation}

By (\ref{eq:good-composition-same-vertex}), Lemma \ref{lem:complete-the-triangle}
applies to the pair of edges $x\overset{\alpha\gamma_{2}\gamma_{1}}{\rightedge}f\left(\alpha\gamma_{2}\gamma_{1}\right)\left(x\right)$
and $f\left(\alpha\right)\left(z\right)\overset{\alpha^{-1}}{\rightedge}z$,
and thus
\[
w\left(x\overset{\gamma_{2}\gamma_{1}}{\rightedge}z\right)>1-2\eps
\]
by (\ref{eq:nondeterioration-1}) and (\ref{eq:non-deterioraton-3}).
\end{proof}
\begin{prop}
\label{prop:they-are-groupoids}Let $0<\eps\leq1/6$. Then $Y_{\eps}$
and $Z_{\eps}$ are $\Gamma$-groupoids.
\end{prop}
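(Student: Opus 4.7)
The plan is to verify each of the two axioms of a $\Gamma$-groupoid (Definition~\ref{def:groupoid}) for $Y_\eps$ and $Z_\eps$ in turn. Since $Z_\eps$ is an induced subgraph of $Y_\eps$ obtained by imposing the extra condition $\deg_{Y_\eps}(\cdot)\geq 1/2$ on the vertex set, it is convenient to handle $Y_\eps$ first and then bootstrap to $Z_\eps$.

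For \textbf{symmetry} in $Y_\eps$: given $x\overset{\gamma}{\rightedge}y\in E(Y_\eps)$ we have $y=f(\gamma)(x)$, both $x,y\in V(Y_\eps)$, and $w(x\overset{\gamma}{\rightedge}y)>1-2\eps$. The symmetry of $f$ gives $f(\gamma^{-1})(y)=f(\gamma)^{-1}(y)=x$, so $y\overset{\gamma^{-1}}{\rightedge}x\in E(X_f)$, and Lemma~\ref{lem:inverse-edge-weight} preserves the weight; the vertex-set condition is symmetric in the endpoints, so the reversed edge lies in $E(Y_\eps)$. The same reasoning applies to $Z_\eps$ once one observes that $\deg_{Y_\eps}$ is constant along $Y_\eps$-connected components, which follows from $Y_\eps$ being a groupoid (established first).

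For \textbf{triangles} in $Y_\eps$: suppose $x\overset{\gamma_1}{\rightedge}y, y\overset{\gamma_2}{\rightedge}z \in E(Y_\eps)$. The vertex conditions $\deg_{X_\eps}(x),\deg_{X_\eps}(y),\deg_{X_\eps}(z)>2/3$ together with the weight conditions $>1-2\eps$ on the two given edges put us directly in the hypothesis of Lemma~\ref{lem:good-compositions}, yielding $w(x\overset{\gamma_2\gamma_1}{\rightedge}z)>1-2\eps\geq 2/3$. What remains is the endpoint identification $f(\gamma_2\gamma_1)(x)=z$ (so that the composed triple actually lies in $E(X_f)$). I would extract an auxiliary element $\alpha$ in $\OE_{X_\eps}(x)(\gamma_2\gamma_1)^{-1}\cap \OE_{X_\eps}(y)\gamma_2^{-1}\cap \OE_{X_\eps}(z)$ (nonempty by a measure-intersection bound), derive $f(\alpha\gamma_2\gamma_1)(x)=f(\alpha)(z)$ as in the proof of Lemma~\ref{lem:good-compositions}, and then use the symmetry of $f$ to exhibit the reverse edge $f(\alpha)(z)\overset{\alpha^{-1}}{\rightedge}z$ as a genuine edge of $X_f$ of weight $>1-\eps$. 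Applying Lemma~\ref{lem:three-strong-edges} to the triplet formed from these high-weight edges together with $x\overset{\gamma_2\gamma_1}{\rightedge}f(\gamma_2\gamma_1)(x)$ then pins the endpoint. With both the endpoint and weight secured, and $x,z\in V(Y_\eps)$, the edge belongs to $E(Y_\eps)$. The argument for $Z_\eps$ is entirely parallel, using the groupoid structure of $Y_\eps$ to transfer $\deg_{Y_\eps}\geq 1/2$ from $x$ to $z$ along the new edge.

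The \textbf{main obstacle} is precisely the endpoint-identification step $f(\gamma_2\gamma_1)(x)=z$ in the triangle property: this is a pointwise compatibility of $f$ with composition and does not follow from the weight bound of Lemma~\ref{lem:good-compositions} alone. Its resolution requires weaving that weight bound with the sum-to-one identity $\sum_{y'}w(x\overset{\gamma_2\gamma_1}{\rightedge}y')=1$ and a careful application of Lemma~\ref{lem:three-strong-edges} driven by the witness $\alpha$. Keeping track of the various weight thresholds and of the cosets of $\OE_{X_\eps}(\cdot)$ is where the bookkeeping is heaviest; the restriction $\eps\leq 1/6$ is what makes every weight under consideration exceed the $2/3$ threshold demanded by Lemma~\ref{lem:three-strong-edges}.
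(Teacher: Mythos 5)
Your flagging of the endpoint-identification step as the main obstacle is well taken: Lemma \ref{lem:good-compositions} only yields the weight bound $w(x\overset{\gamma_2\gamma_1}{\rightedge}z)>1-2\eps$, while membership of $x\overset{\gamma_2\gamma_1}{\rightedge}z$ in $E(Y_\eps)\subset E(X_f)$ additionally requires $z=f(\gamma_2\gamma_1)(x)$, and the paper's one-line appeal to Lemma \ref{lem:good-compositions} leaves this implicit. However, your proposed resolution does not close the gap. You want to apply Lemma \ref{lem:three-strong-edges} with third edge $x\overset{\gamma_2\gamma_1}{\rightedge}f(\gamma_2\gamma_1)(x)$, but that lemma demands all three edges have weight exceeding $2/3$, and nothing you have established bounds $w\bigl(x\overset{\gamma_2\gamma_1}{\rightedge}f(\gamma_2\gamma_1)(x)\bigr)$ from below. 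Worse, the supporter sets $T(x\overset{\gamma_2\gamma_1}{\rightedge}v)$ for different targets $v$ are pairwise disjoint, so $w(x\overset{\gamma_2\gamma_1}{\rightedge}z)>2/3$ forces $w(x\overset{\gamma_2\gamma_1}{\rightedge}v)<1/3$ for every $v\ne z$; thus in exactly the problematic case $f(\gamma_2\gamma_1)(x)\ne z$, your third edge has weight below $1/3$ and Lemma \ref{lem:three-strong-edges} is inapplicable, so no contradiction is obtained. The sum-to-one identity you invoke therefore cuts against you, not for you. If the endpoint identification is genuinely missing, the more natural repair is to define the edge sets of $X_{2\eps}$, $Y_\eps$, $Z_\eps$ by the weight threshold alone (the target is uniquely determined once the threshold exceeds $1/2$) rather than by intersection with $E(X_f)$, and then adjust the downstream claim that $g(\gamma)(x)=f(\gamma)(x)$ on $D_Z(\gamma)$.

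On a smaller point, your treatment of $Z_\eps$ is more involved than necessary and introduces a spurious dependency. An induced subgraph of a $\Gamma$-groupoid is automatically a $\Gamma$-groupoid: both axioms quantify only over edges whose endpoints lie in the subgraph, so they are inherited verbatim by any induced subgraph. This is exactly what the paper uses to pass from $Y_\eps$ to $Z_\eps$. In particular, you do not need the constant-degree-on-components observation here (that is used later for index bookkeeping), nor the ordering ``establish that $Y_\eps$ is a groupoid before checking symmetry of $Z_\eps$'' that your outline appears to require.
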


\begin{proof}
The function graph $X_{f}$ is a $\Gamma$-graph, so the same is true
for its subgraph $X_{2\eps}$. By Lemma \ref{lem:inverse-edge-weight},
$X_{2\eps}$ satisfies the symmetry condition of Definition \ref{def:groupoid},
and thus so does its induced subgraph $Y_{\eps}$. Furthermore, since
$\eps\leq1/6$, Lemma \ref{lem:good-compositions} implies that $Y_{\eps}$
satisfies the triangles condition, and is thus a $\Gamma$-groupoid.
Therefore, the induced subgraph $Z_{\eps}$ of $Y_{\eps}$ is a $\Gamma$-groupoid
as well.
\end{proof}
\ \\

\subsubsection{\label{subsec:amenable-construct-g}Construction of the homomorphism
$g$}

Recall that we want to construct a finite set $V_{1}$, of cardinality
not much larger than $n$, and a homomorphism $g\colon\Gamma\rightarrow\Sym\left(V_{1}\right)$
such that the subgraph $Z=Z_{1/6}$ of $X_{f}$ embeds into the action groupoid
 $X_{g}$. First, we investigate further generalities on $\Gamma$-groupoids.

Let $X=\left(V,E\right)$ be a $\Gamma$-groupoid. Two vertices $x,y\in V$
are \emph{connected} if there is $\gamma\in\Gamma$ such that $x\overset{\gamma}{\rightedge}y\in E$.
Connectedness induces an equivalence relation on $V$. The induced
subgraphs of $X$ on the equivalence classes are the \emph{connected
components}, or \emph{components} for short, of the $\Gamma$-groupoid
$X$. If there is just one component, we say that $X$ is \emph{connected}.
Each component of $X$ is a connected $\Gamma$-groupoid. For $x\in V$,
write $C_{X,x}$, or $C_{x}$, for the component of $x$ in $X$.

The \emph{stabilizer} of $x\in V$ is $\Gamma_{x}=\left\{ \gamma\in\Gamma\mid x\overset{\gamma}{\rightedge}x\in E\right\} $.
Note that $\Gamma_{x}$ is a subgroup of $\Gamma$. Also, if $x\overset{\gamma}{\rightedge}y\in E$,
then $\Gamma_{y}=\gamma\Gamma_{x}\gamma^{-1}$ and $\OE\left(y\right)\gamma=\OE\left(x\right)$.
Hence, if the $\Gamma$-groupoid $X$ is connected, then the index
$\left[\Gamma\colon\Gamma_{x}\right]$ is the same for all $x\in V$,
and the same is true for the degree $\deg\left(x\right)$. These numbers
are, respectively, the the \emph{index} $\index\left(X\right)$ and
\emph{degree} $\deg\left(X\right)$ of the connected groupoid $X$.

By Proposition \ref{prop:they-are-groupoids}, if $0<\eps\leq1/6$
then $Y_{\eps}$ is a $\Gamma$-groupoid. Recalling Definition \ref{def:from-X_f-to-X_0}(iii),
we see that in this case $Z_{\eps}$ is the union of the components
of $Y_{\eps}$ that have degree at least $1/2$.

The proof of the following proposition is straightforward and is left
to the reader.
\begin{prop}
\label{prop:Groupoid_embedding_action}Let $X=\left(V,E\right)$ be
a connected $\Gamma$-groupoid and $x\in V$. Let $h\colon\Gamma\rightarrow\Sym\left(\Gamma/\Gamma_{x}\right)$
be the action of $\Gamma$ on $\Gamma/\Gamma_{x}$ by left multiplication.
Then, the function $\varphi\colon V\rightarrow\Gamma/\Gamma_{x}$
defined by

\[
\varphi\left(y\right)=\gamma\Gamma_{x}\qquad\forall x\overset{\gamma}{\rightedge}y\in E
\]
is well defined and injective. Furthermore, it is an embedding of
the $\Gamma$-graph $X$ into the action groupoid $X_{h}$.
\end{prop}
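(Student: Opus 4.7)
The plan is to verify three things in turn: that the formula for $\varphi$ gives a well-defined value (independent of the choice of edge), that $\varphi$ is injective, and that $\varphi$ is a morphism of $\Gamma$-graphs. Since $X$ is connected, for every $y\in V$ at least one $\gamma$ with $x\overset{\gamma}{\rightedge}y\in E$ exists, so $\varphi$ will be defined on all of $V$; the only real content is the coset identities that make the definition consistent.

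For well-definedness, suppose that $x\overset{\gamma_1}{\rightedge}y$ and $x\overset{\gamma_2}{\rightedge}y$ both lie in $E$. I would apply the symmetry axiom to turn the second edge into $y\overset{\gamma_2^{-1}}{\rightedge}x$ and then invoke the triangles axiom on the pair $x\overset{\gamma_1}{\rightedge}y,\ y\overset{\gamma_2^{-1}}{\rightedge}x$ to produce the loop $x\overset{\gamma_2^{-1}\gamma_1}{\rightedge}x\in E$, which says $\gamma_2^{-1}\gamma_1\in\Gamma_x$ and hence $\gamma_1\Gamma_x=\gamma_2\Gamma_x$. For injectivity, suppose $\varphi(y_1)=\varphi(y_2)$ with witnesses $x\overset{\gamma_i}{\rightedge}y_i$. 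Writing $\gamma_1=\gamma_2 s$ with $s\in\Gamma_x$, so that $x\overset{s}{\rightedge}x\in E$, the triangles axiom applied to $x\overset{s}{\rightedge}x$ and $x\overset{\gamma_2}{\rightedge}y_2$ gives $x\overset{\gamma_2 s}{\rightedge}y_2=x\overset{\gamma_1}{\rightedge}y_2\in E$, and then the at-most-one-destination clause in the definition of a $\Gamma$-graph collapses $y_1=y_2$.

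For the embedding property, given $y_1\overset{\gamma}{\rightedge}y_2\in E$ and a witness $x\overset{\gamma_1}{\rightedge}y_1\in E$ for $\varphi(y_1)$, the triangles axiom produces $x\overset{\gamma\gamma_1}{\rightedge}y_2\in E$, so by the already-established well-definedness $\varphi(y_2)=\gamma\gamma_1\Gamma_x=h(\gamma)(\gamma_1\Gamma_x)=h(\gamma)(\varphi(y_1))$, which is precisely the statement that $\varphi(y_1)\overset{\gamma}{\rightedge}\varphi(y_2)\in E(X_h)$. There is no significant obstacle: every assertion reduces to a single application of the symmetry axiom, the triangles axiom, or the uniqueness clause in the definition of a $\Gamma$-graph. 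The only subtle points are (a) remembering to turn an edge around via symmetry when closing a triangle back to $x$, and (b) keeping the order of labels straight (the triangle axiom sends $\gamma_1$ then $\gamma_2$ to the composite label $\gamma_2\gamma_1$, matching the left-action convention defining $h$).
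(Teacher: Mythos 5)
Your proof is correct and is exactly the straightforward argument the paper has in mind; the paper explicitly leaves this proposition to the reader, so there is nothing to contrast against. Each of your three verifications (well-definedness via symmetry then a triangle back to $x$, injectivity via a triangle through the loop $x\overset{s}{\rightedge}x$ plus the uniqueness clause in the definition of a $\Gamma$-graph, and the morphism property via a single triangle and the identity $h(\gamma)(\gamma_1\Gamma_x)=\gamma\gamma_1\Gamma_x$) is a valid single-step application of the axioms, and you correctly keep the label order $\gamma_2\gamma_1$ consistent with the left-multiplication action defining $h$.
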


Let $X=\left(V,E\right)$ be a finite (i.e. $\left|V\right|<\infty$)
connected groupoid. For $x\in V$, Proposition \ref{prop:Groupoid_embedding_action}
embeds $X$, which has $\left|V\right|$ vertices, in an action groupoid
that has $\left[\Gamma\colon\Gamma_{x}\right]$ vertices. We investigate
the ratio $\frac{\left|V\right|}{\left[\Gamma\colon\Gamma_{x}\right]}$.
For $x\in V$ and $\gamma\in\OE\left(x\right)$, write $\gamma\cdot x$
for the unique element $y$ of $V$ such that $x\overset{\gamma}{\rightedge}y\in E$.
For $x\in V$, $\OE\left(x\right)$ is a union of left cosets of $\Gamma_{x}$,
and for $\gamma_{1},\gamma_{2}\in\OE\left(x\right)$ we have $\gamma_{1}\cdot x=\gamma_{2}\cdot x$
if and only if $\gamma_{1}\Gamma_{x}=\gamma_{2}\Gamma_{x}$. Since
$X$ is connected, this means that $\OE\left(x\right)$ is a union
of $\left|V\right|$ distinct left cosets of $\Gamma_{x}$. Hence
$\left|V\right|m\left(\Gamma_{x}\right)=m\left(\OE\left(x\right)\right)=\deg\left(X\right)$.
But $\left[\Gamma\colon\Gamma_{x}\right]=\frac{1}{m\left(\Gamma_{x}\right)}$,
and thus if $\deg\left(X\right)>0$ then $\left[\Gamma\colon\Gamma_{x}\right]<\infty$
and 
\begin{equation}
\frac{\left|V\right|}{\left[\Gamma\colon\Gamma_{x}\right]}=\deg\left(X\right)\,\,\text{.}\label{eq:index-degree}
\end{equation}

It is possible to apply Proposition \ref{prop:Groupoid_embedding_action}
to each component of $Y_{1/6}$ in order to embed $Y_{1/6}$ into
an action groupoid. However, in order to obtain sufficiently good
bounds on the number of vertices in the action groupoid, we do the same
to $Z=Z_{1/6}$ rather than $Y_{1/6}$.

The definition of the set $V_{1}$, the homomorphism $g\colon\Gamma\rightarrow\Sym\left(V_{1}\right)$
and the embedding of $Z$ into the action groupoid $X_g$
proceeds as follows.
Let $\left\{ C_{i}\right\} _{i=1}^{m}$ be the components of $Z$.
Fix a vertex $x_{i}\in V\left(C_{i}\right)$ for each $1\le i\leq m$,
and write $g_{i}\colon\Gamma\to\Sym\left(\Gamma/\Gamma_{x_{i}}\right)$
for the action of $\Gamma$ on $\Gamma/\Gamma_{x_{i}}$ by left multiplication.
By Proposition \ref{prop:Groupoid_embedding_action}, each $C_{i}$ embeds into $X_{g_{i}}$,
and thus $Z$ embeds into $\coprod_{i=1}^{m}X_{g_{i}}$. More precisely,
write $V_{1}=\coprod_{i=1}^{m}\Gamma/\Gamma_{x_{i}}$ and let $g\colon\Gamma\to\Sym\left(V_{1}\right)$
be the action by left multiplication. Each $C_{i}$ embeds into $X_{g_{i}}$
by an embedding $\varphi_{i}:V\left(C_{i}\right)\to\Gamma/\Gamma_{x_{i}}$.
These embeddings give rise to an embedding $\varphi:V\left(Z\right)\to V_{1}$
of $Z$ into $X_{g}$.

We evaluate the cardinality of $V_{1}$ as follows.
\begin{align}
\left|V_{1}\right| & =\sum_{i=1}^{c}\left[\Gamma:\Gamma_{x}\right]\nonumber \\
 & =\sum_{i=1}^{c}\left|V\left(C_{i}\right)\right|\cdot\left(\deg\left(C_{i}\right)\right)^{-1} & \text{by (\ref{eq:index-degree})}\nonumber \\
 & =\sum_{x\in V\left(Z\right)}\left(\deg_{Z}x\right)^{-1}\,\,\text{.}\label{eq:V1-sum-of-inverses}
\end{align}
We shall see in Lemma \ref{lem:Z-graph-analysis}(iv) that (\ref{eq:V1-sum-of-inverses})
is bounded from above by $\left(1+O\left(\delta_{1}\right)\right)\left|V\left(Z\right)\right|$.\\

\subsubsection{Proof of Proposition \ref{prop:amenable-main}(ii)-(iv)}

We constructed the subgraph $Z$ of $X_{f}$, the homomorphism $g\colon\Gamma\rightarrow\Sym\left(V_{1}\right)$
and an embedding $\varphi\colon V\left(Z\right)\rightarrow V_{1}$
of $Z$ into $X_{g}$. To complete the proof of Proposition \ref{prop:amenable-main},
it remains to prove the lower bounds on $\left|V\left(Z\right)\right|$
and $\left|D_{Z}\left(\gamma\right)\right|$ and the upper bound on
$\left|V_{1}\right|$. To do so, we study quantitative properties
of $X_{\eps}$, $Y_{\eps}$ and  $Z_{\eps}$.

In the following three lemmas, we justify some of the steps by invoking
Markov's inequality. By this we are referring to the fact that if
$a_{1},\dotsc,a_{m}$ are real numbers in the interval $\left[0,1\right]$
and $\theta>0$, then 
\begin{equation}\label{eq:markov-ineq}
\left|\left\{ i\mid a_{i}>1-\theta\right\} \right|\geq\left(1-\frac{1}{\theta}\left(1-\frac{1}{m}\sum_{i=1}^{m}a_{i}\right)\right)m\,\,\text{.}
\end{equation}
\begin{lem}
\label{lem:X-graph-analysis}Let $\eps>0$. Then\renewcommand{\labelenumi}{\roman{enumi})}
\begin{enumerate}
\item $\left|D_{X_{\eps}}\left(\gamma\right)\right|\geq\left(1-\frac{\delta_{\infty}}{\eps}\right)n$
for every $\gamma\in\Gamma$.
\item $\int\left|D_{X_{\eps}}\left(\gamma\right)\right|\dm\left(\gamma\right)\geq\left(1-\frac{\delta_{1}}{\eps}\right)n$.
\end{enumerate}
\end{lem}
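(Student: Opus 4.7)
The plan is to reduce both estimates to a single identity expressing the average weight of the edges $x \overset{\gamma}{\rightedge} f(\gamma)(x)$ in terms of the two-variable defect of $f$, and then to apply the form of Markov's inequality displayed in \eqref{eq:markov-ineq}.

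First I would observe that $x\notin D_{X_{\eps}}(\gamma)$ precisely when $w\bigl(x\overset{\gamma}{\rightedge}f(\gamma)(x)\bigr)\leq 1-\eps$, i.e.\ when $1-w\geq\eps$. Unfolding the definition of the weight and swapping the order of summation/integration (which is justified because, for fixed $\gamma$, the function $t\mapsto \mathbf{1}_{\{f(t)f(t^{-1}\gamma)(x)\neq f(\gamma)(x)\}}$ is bounded and the sum over $x\in[n]$ is finite), I would obtain the key identity
\[
\sum_{x\in[n]}\bigl(1-w\bigl(x\overset{\gamma}{\rightedge}f(\gamma)(x)\bigr)\bigr)
=n\int d^{H}\bigl(f(t)f(t^{-1}\gamma),\,f(\gamma)\bigr)\,dm(t).
\]

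For (i), since $f(t)f(t^{-1}\gamma)$ and $f(t\cdot t^{-1}\gamma)=f(\gamma)$ are compared for every $t$, the integrand is bounded pointwise by $\delta_{\infty}$, so the right-hand side is at most $n\delta_{\infty}$. Applying \eqref{eq:markov-ineq} with $a_{x}=w\bigl(x\overset{\gamma}{\rightedge}f(\gamma)(x)\bigr)$ and $\theta=\eps$ immediately yields $|D_{X_{\eps}}(\gamma)|\geq(1-\delta_{\infty}/\eps)n$.

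For (ii), I would integrate the same identity over $\gamma$ and use the substitution $\gamma_{1}=t$, $\gamma_{2}=t^{-1}\gamma$; by left invariance of $m$,
\[
\iint d^{H}\bigl(f(t)f(t^{-1}\gamma),\,f(\gamma)\bigr)\,dm(t)\,dm(\gamma)
=\iint d^{H}\bigl(f(\gamma_{1})f(\gamma_{2}),\,f(\gamma_{1}\gamma_{2})\bigr)\,dm(\gamma_{1})\,dm(\gamma_{2})
=\delta_{1}.
\]
Thus $\int\sum_{x}(1-w)\,dm(\gamma)\leq n\delta_{1}$. Integrating the bound coming from \eqref{eq:markov-ineq} over $\gamma$ turns this into $\int(n-|D_{X_{\eps}}(\gamma)|)\,dm(\gamma)\leq n\delta_{1}/\eps$, which is exactly (ii). There is no real obstacle here; the only subtle point is the Fubini/substitution step, which is harmless because we are merely swapping a finite sum with integration against $m$ and invoking the already-fixed invariance of $m$.
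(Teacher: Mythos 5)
Your proof of part~(i) is correct and follows essentially the same route as the paper: unfold the weight, swap a finite sum with the $m$-integral, bound the resulting average by $\delta_{\infty}$, and apply Markov as in~\eqref{eq:markov-ineq}. The key identity you state, $\sum_{x\in[n]}\bigl(1-w\bigl(x\overset{\gamma}{\rightedge}f(\gamma)(x)\bigr)\bigr)=n\int d^{H}\bigl(f(t)f(t^{-1}\gamma),f(\gamma)\bigr)\dm(t)$, is also correct.

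The gap is in part~(ii), specifically in the line ``by left invariance of $m$.'' You want
\[
\intint d^{H}\bigl(f(t)f(t^{-1}\gamma),f(\gamma)\bigr)\dm(t)\dm(\gamma)=\delta_{1}
\]
via the substitution $\gamma_{1}=t$, $\gamma_{2}=t^{-1}\gamma$. But in the iterated integral the \emph{inner} variable is $t$; the substitution $\gamma_{2}=t^{-1}\gamma$ changes the \emph{outer} variable $\gamma$ in a way that depends on the inner variable $t$, which cannot be done directly. To invoke left invariance (replacing $\gamma$ by $t\gamma_{2}$ for fixed $t$), you would first have to swap the order of the two $m$-integrations. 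That swap is a Fubini-type step, and Fubini does \emph{not} hold in general for iterated integrals against a finitely additive measure. Your closing remark that the ``Fubini/substitution step\dots is harmless because we are merely swapping a finite sum with integration against $m$'' is therefore not accurate: what needs justification is a swap of two $\dm$-integrals, not a finite sum against one integral.

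The paper sidesteps this by using \emph{inverse} invariance rather than left invariance, which is precisely why it fixes a measure that is simultaneously left, right, and inverse invariant. Concretely: by bi-invariance of $d^{H}$ and symmetry of $f$, $d^{H}\bigl(f(t)f(t^{-1}\gamma),f(\gamma)\bigr)=d^{H}\bigl(f(t^{-1})f(\gamma),f(t^{-1}\gamma)\bigr)$. Then, for each fixed $\gamma$, the substitution $s=t^{-1}$ in the \emph{inner} integral (by inverse invariance of $m$) gives $\int d^{H}\bigl(f(s)f(\gamma),f(s\gamma)\bigr)\dm(s)$, and the iterated integral is then literally $\delta_{1}$ with inner variable $\gamma_{1}=s$ and outer variable $\gamma_{2}=\gamma$. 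No Fubini swap is required. Your argument for~(ii) is repairable, but it needs to use inverse invariance (together with the rewriting of the integrand), not left invariance.
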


\begin{proof}
For $\gamma,t\in\Gamma$ and $x\in\left[n\right]$, write ${\bf 1}_{\gamma,t,x}=\begin{cases}
1 & f\left(\gamma\right)\left(x\right)=f\left(t\right)f\left(t^{-1}\gamma\right)\left(x\right)\\
0 & {\rm otherwise}
\end{cases}$. Then
\begin{equation}
\frac{1}{n}\sum_{x=1}^{n}{\bf 1}_{\gamma,t,x}=1-d^{H}\left(f\left(t^{-1}\right)f\left(\gamma\right),f\left(t^{-1}\gamma\right)\right)\geq1-\delta_{\infty}\qquad\forall\gamma\in\Gamma\forall t\in\Gamma\label{eq:strong-group-elements-1}
\end{equation}
and 
\begin{equation}
\int{\bf 1}_{\gamma,t,x}\dm\left(t\right)=w\left(x\overset{\gamma}{\rightedge}f\left(\gamma\right)\left(x\right)\right)\qquad\forall\gamma\in\Gamma\forall x\in\left[n\right]\,\,\text{.}\label{eq:strong-group-elements-2}
\end{equation}
Hence, for $\gamma\in\Gamma$,
\[
\frac{1}{n}\sum_{x\in\left[n\right]}w\left(x\overset{\gamma}{\rightedge}f\left(\gamma\right)\left(x\right)\right)=\int\left(\frac{1}{n}\sum_{x\in\left[n\right]}{\bf 1}_{\gamma,t,x}\right)\dm\left(t\right)\geq1-\delta_{\infty}\,\,\text{,}
\]
and thus (i) follows from Markov's inequality (\ref{eq:markov-ineq}): 
\[
\left|D_{X_{\eps}}\left(\gamma\right)\right|=\left|\left\{ x\in\left[n\right]\mid w\left(x\overset{\gamma}{\rightedge}f\left(\gamma\right)\left(x\right)\right)>1-\epsilon\right\} \right|\geq\left(1-\frac{\delta_{\infty}}{\epsilon}\right)n\,\,\text{.}
\]
Now, using (\ref{eq:strong-group-elements-2}), (\ref{eq:strong-group-elements-1})
and the inverse-invariance of $m$,
\begin{align}
\frac{1}{n}\sum_{x\in\left[n\right]}\int w\left(x\overset{\gamma}{\rightedge}f\left(\gamma\right)\left(x\right)\right)\dm\left(\gamma\right) & =\int\int\left(\frac{1}{n}\sum_{x\in\left[n\right]}{\bf 1}_{\gamma,t,x}\right)\dm\left(t\right)\dm\left(\gamma\right)\nonumber \\
 & =\int\int\left(1-d^{H}\left(f\left(t^{-1}\right)f\left(\gamma\right),f\left(t^{-1}\gamma\right)\right)\right)\dm\left(t\right)\dm\left(\gamma\right)\nonumber \\
 & =1-\delta_{1}\,\,\text{.}\label{eq:X-graph-analysis}
\end{align}
Thus (ii) follows:
\begin{align*}
\int\left|D_{X_{\eps}}\left(\gamma\right)\right|\dm\left(\gamma\right) & =\int\left|\left\{ x\in\left[n\right]\mid w\left(x\overset{\gamma}{\rightedge}f\left(\gamma\right)\left(x\right)\right)>1-\eps\right\} \right|\dm\left(\gamma\right)\\
 & \geq\int\left(1-\frac{1}{\eps}\left(1-\frac{1}{n}\sum_{x\in\left[n\right]}w\left(x\overset{\gamma}{\rightedge}f\left(\gamma\right)\left(x\right)\right)\right)\right)n\dm\left(\gamma\right) & \text{by Markov (\ref{eq:markov-ineq})}\\
 & =\left(1-\frac{\delta_{1}}{\eps}\right)n\,\,\text{.} & \text{by (\ref{eq:X-graph-analysis})}
\end{align*}
\end{proof}
\begin{lem}
\label{lem:Y-graph-analysis}Let $0<\eps\leq\frac{1}{6}$. Then\renewcommand{\labelenumi}{\roman{enumi})}
\begin{enumerate}
\item $|V\left(Y_{\eps}\right)|\geq\left(1-\frac{3\delta_{1}}{\eps}\right)n$.
\item $\left|D_{Y_{\eps}}\left(\gamma\right)\right|\geq\left(1-\frac{6.5\delta_{\infty}}{\eps}\right)n$
for every $\gamma\in\Gamma$.
\item $\int\left|D_{Y_{\eps}}\left(\gamma\right)\right|\dm\left(\gamma\right)\geq\left(1-\frac{6.5\delta_{1}}{\eps}\right)n$.
\end{enumerate}
\end{lem}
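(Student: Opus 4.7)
The plan is to derive each of the three bounds by Markov's inequality \eqref{eq:markov-ineq} applied to quantities controlled by Lemma \ref{lem:X-graph-analysis} and Fact \ref{fact:deg-and-domain}.

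For (i) I would bound the number of $x \in [n]$ that fail the condition $\deg_{X_{\eps}}(x)>2/3$. Setting $a_x = 1 - \deg_{X_{\eps}}(x) \in [0,1]$, Fact \ref{fact:deg-and-domain} combined with Lemma \ref{lem:X-graph-analysis}(ii) yields
\[
\sum_{x \in [n]} a_x \;=\; n \;-\; \int |D_{X_{\eps}}(\gamma)|\,\dm(\gamma) \;\leq\; \delta_1 n/\eps.
\]
Markov's inequality then gives $\bigl|\{x : a_x \geq 1/3\}\bigr| \leq 3\delta_1 n/\eps$, so $|V(Y_{\eps})| \geq (1-3\delta_1/\eps)n$, which is exactly (i).

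For (ii) and (iii), the key observation is that $x \in [n]$ fails to lie in $D_{Y_{\eps}}(\gamma)$ only if at least one of the following occurs: (a) $x \notin V(Y_{\eps})$; (b) $f(\gamma)(x) \notin V(Y_{\eps})$; (c) the edge $x \overset{\gamma}{\rightedge} f(\gamma)(x)$ has weight $\leq 1-2\eps$, i.e.\ $x \notin D_{X_{2\eps}}(\gamma)$. By (i), each of (a) and (b) contributes at most $n - |V(Y_{\eps})| \leq 3\delta_1 n/\eps$ to the bad count; for (b) this uses the fact that $f(\gamma)$ is a permutation, so the preimage under $f(\gamma)$ of $[n] \setminus V(Y_{\eps})$ has the same cardinality as $[n] \setminus V(Y_{\eps})$. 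For (c), Lemma \ref{lem:X-graph-analysis}(i) applied with $2\eps$ in place of $\eps$ gives $n - |D_{X_{2\eps}}(\gamma)| \leq \delta_\infty n/(2\eps)$, handling (ii); integrating over $\gamma$ and invoking Lemma \ref{lem:X-graph-analysis}(ii) instead gives $n - \int |D_{X_{2\eps}}(\gamma)|\dm(\gamma) \leq \delta_1 n/(2\eps)$, handling (iii). A union bound across the three failure modes then produces the totals $(3+3+0.5)\delta_\infty/\eps = 6.5\delta_\infty/\eps$ for (ii) (using $\delta_1 \leq \delta_\infty$) and $(3+3+0.5)\delta_1/\eps = 6.5\delta_1/\eps$ for (iii).

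I do not expect a serious obstacle: each step is a direct application of Markov, of the permutation-invariance of $f(\gamma)$, or of the previously established Lemma \ref{lem:X-graph-analysis}. The constant $6.5$ decomposes transparently as $3+3+0.5$ across the three failure modes (a)--(c), which is reassuring and suggests the bookkeeping is essentially tight.
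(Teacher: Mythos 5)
Your proposal is correct and follows essentially the same route as the paper: part (i) is the identical Markov argument via Fact \ref{fact:deg-and-domain} and Lemma \ref{lem:X-graph-analysis}(ii), and for (ii)--(iii) your three-way failure decomposition (out of $V(Y_\eps)$, out of $f(\gamma)^{-1}V(Y_\eps)$, out of $D_{X_{2\eps}}(\gamma)$) is exactly the paper's inequality $\left|D_{Y_{\eps}}\left(\gamma\right)\right|\geq\left|D_{X_{2\eps}}\left(\gamma\right)\right|-2\left(n-|V\left(Y_{\eps}\right)|\right)$ in display form, with the same accounting $0.5+3+3=6.5$.
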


\begin{proof}
To prove (i), we compute
\begin{align*}
\left|V\left(Y_{\eps}\right)\right| & =\left|\left\{ x\in\left[n\right]\mid\deg_{X_{\eps}}\left(x\right)>1-1/3\right\} \right|\\
 & \geq\left(1-3\left(1-\frac{1}{n}\sum_{x\in\left[n\right]}\deg_{X_{\eps}}\left(x\right)\right)\right)n & \text{by Markov (\ref{eq:markov-ineq})}\\
 & =\left(1-3\left(1-\frac{1}{n}\int\left|D_{X_{\eps}}\left(\gamma\right)\right|\dm\left(\gamma\right)\right)\right)n & \text{by Fact \ref{fact:deg-and-domain}}\\
 & \geq\left(1-\frac{3\delta_{1}}{\eps}\right)n\,\,\text{.} & \text{by Lemma \ref{lem:X-graph-analysis}(ii)}
\end{align*}
Now, for $\gamma\in\Gamma$,
\begin{align}
\left|D_{Y_{\eps}}\left(\gamma\right)\right| & =\left|D_{X_{2\eps}}\left(\gamma\right)\cap V\left(Y_{\eps}\right)\cap f\left(\gamma\right){}^{-1}V\left(Y_{\eps}\right)\right|\nonumber \\
 & =\left|D_{X_{2\eps}}\left(\gamma\right)\setminus\left(\left(\left[n\right]\setminus V\left(Y_{\eps}\right)\right)\cup\left(\left[n\right]\setminus f\left(\gamma^{-1}\right)V\left(Y_{\eps}\right)\right)\right)\right|\nonumber \\
 & \geq\left|D_{X_{2\eps}}\left(\gamma\right)\right|-2\left(n-|V\left(Y_{\eps}\right)|\right)\nonumber \\
 & \geq\left|D_{X_{2\eps}}\left(\gamma\right)\right|-\frac{6\delta_{1}}{\eps}n\,\,\text{.} & \text{by (i)}\label{eq:Y-graph-large-domain}
\end{align}
Then (ii) follows from (\ref{eq:Y-graph-large-domain}) and Lemma
\ref{lem:X-graph-analysis}(i), and (iii) follows by integrating (\ref{eq:Y-graph-large-domain})
and using Lemma \ref{lem:X-graph-analysis}(ii).
\end{proof}
Proposition \ref{prop:amenable-main}(ii)-(iii) follows from Lemma
\ref{lem:Z-graph-analysis}(i)-(iii) by plugging in $\eps=1/6$. In
light of (\ref{eq:V1-sum-of-inverses}), Proposition \ref{prop:amenable-main}(iv)
follows from Lemma \ref{lem:Z-graph-analysis}(iv).
\begin{lem}
\label{lem:Z-graph-analysis}Let $0<\eps\leq\frac{1}{6}$ and assume
that $\delta_{1}\leq\eps/13$. Then\renewcommand{\labelenumi}{\roman{enumi})}
\begin{enumerate}
\item $\left|Z_{\eps}\right|\geq\left(1-\frac{16\delta_{1}}{\eps}\right)n$.
\item $\left|D_{Z_{\eps}}\left(\gamma\right)\right|\geq\left(1-\frac{19.5\delta_{\infty}}{\eps}\right)n$
for every $\gamma\in\Gamma$.
\item $\int\left|D_{Z_{\eps}}\left(\gamma\right)\right|\dm\left(\gamma\right)\geq\left(1-\frac{19.5\delta_{1}}{\eps}\right)n$.
\item $\sum_{x\in V\left(Z_{\eps}\right)}\left(\deg_{Z_{\eps}}\left(x\right)\right)^{-1}\leq\left(1+\frac{13\delta_{1}}{\eps}\right)n$.
\end{enumerate}
\end{lem}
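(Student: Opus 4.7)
The plan is to let $A = V(Y_{\eps}) \setminus V(Z_{\eps}) = \{x \in V(Y_{\eps}) \mid \deg_{Y_{\eps}}(x) < 1/2\}$ and first show the master estimate $|A| \leq \tfrac{13 \delta_1}{\eps}\, n$. By Fact~\ref{fact:deg-and-domain} and Lemma~\ref{lem:Y-graph-analysis}(iii), $\sum_{x \in V(Y_{\eps})} \deg_{Y_{\eps}}(x) \geq (1 - \tfrac{6.5 \delta_1}{\eps})\,n$. Splitting the same sum according to whether $x \in A$ or not gives the upper bound $|V(Y_{\eps})| - \tfrac{1}{2}|A|$, since degrees on $A$ are below $1/2$ and all degrees are at most $1$. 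Combining these and using $|V(Y_{\eps})| \leq n$ yields $|A| \leq \tfrac{13\delta_1}{\eps}\, n$. Part~(i) then follows immediately from Lemma~\ref{lem:Y-graph-analysis}(i), since $|V(Z_{\eps})| = |V(Y_{\eps})| - |A| \geq (1 - \tfrac{3\delta_1}{\eps})n - \tfrac{13\delta_1}{\eps} n$.

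For (ii) and (iii), since $Z_{\eps}$ is the induced subgraph of $Y_{\eps}$ on $V(Z_{\eps})$, an edge $x \overset{\gamma}{\to} f(\gamma)(x) \in E(Y_{\eps})$ is removed from $Z_{\eps}$ only when $x \in A$ or $f(\gamma)(x) \in A$. As $f(\gamma)$ is a bijection of $[n]$, this gives $|D_{Y_{\eps}}(\gamma) \setminus D_{Z_{\eps}}(\gamma)| \leq 2|A| \leq \tfrac{13\delta_1}{\eps}\, n$ for every $\gamma \in \Gamma$. Subtracting this from Lemma~\ref{lem:Y-graph-analysis}(ii) yields (ii), and integrating against $m$ and using Lemma~\ref{lem:Y-graph-analysis}(iii) yields (iii).

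Part~(iv) requires one extra structural input: by Proposition~\ref{prop:they-are-groupoids}, $Y_{\eps}$ is a $\Gamma$-groupoid, so (as discussed in Section~\ref{subsec:amenable-construct-g}) $\deg_{Y_{\eps}}$ is constant on each connected component of $Y_{\eps}$. Therefore $V(Z_{\eps})$ is a union of entire components of $Y_{\eps}$, and consequently $\deg_{Z_{\eps}}(x) = \deg_{Y_{\eps}}(x) \geq 1/2$ for every $x \in V(Z_{\eps})$. Using the elementary inequality $1/t \leq 1 + 2(1-t)$ on $[1/2,1]$, we obtain
\[
\sum_{x \in V(Z_{\eps})} \bigl(\deg_{Z_{\eps}}(x)\bigr)^{-1} \leq |V(Z_{\eps})| + 2 \sum_{x \in V(Z_{\eps})} \bigl(1 - \deg_{Y_{\eps}}(x)\bigr) \leq n + 2 \sum_{x \in V(Y_{\eps})} \bigl(1 - \deg_{Y_{\eps}}(x)\bigr),
\]
where the last step drops to a sum over all of $V(Y_{\eps})$ since every summand is nonnegative. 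Applying Fact~\ref{fact:deg-and-domain} together with $|V(Y_{\eps})| \leq n$ and Lemma~\ref{lem:Y-graph-analysis}(iii) bounds the latter sum by $\tfrac{6.5\delta_1}{\eps}\,n$, giving $(1 + \tfrac{13\delta_1}{\eps})\,n$ as required.

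The only conceptually subtle step is (iv), where the argument depends on $Z_{\eps}$ being a union of $Y_{\eps}$-components rather than an arbitrary subset cut out by a degree threshold; this is precisely what allows the stronger constant $13$ (instead of the naive $\sim 39$ one would obtain by relating $\deg_{Z_{\eps}}$ to $\deg_{Y_{\eps}}$ only through $\deg_{Z_{\eps}} \geq \deg_{Y_{\eps}} - 2|A|/n$). The hypothesis $\delta_1 \leq \eps/13$ is used implicitly only to keep the obtained bounds nontrivial; all estimates above hold unconditionally.
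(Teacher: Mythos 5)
Your argument for part (i) and your derivation of the bound $|A| \leq \frac{13\delta_1}{\eps}n$ are correct, and your treatment of part (iv) is essentially the paper's (both rely on $Y_\eps$ being a groupoid so that $V(Z_\eps)$ is a union of connected components). But there is a genuine gap in parts (ii) and (iii).

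You claim $|D_{Y_\eps}(\gamma)\setminus D_{Z_\eps}(\gamma)| \leq 2|A| \leq \frac{13\delta_1}{\eps}n$. The second inequality is false: from your master estimate $|A| \leq \frac{13\delta_1}{\eps}n$ you only get $2|A| \leq \frac{26\delta_1}{\eps}n$, which, combined with Lemma~\ref{lem:Y-graph-analysis}(ii), yields only $|D_{Z_\eps}(\gamma)|\geq\left(1-\frac{32.5\,\delta_\infty}{\eps}\right)n$ rather than the required constant $19.5$. The underlying issue is that the factor of $2$ (``$x\in A$ \emph{or} $f(\gamma)(x)\in A$'') is wasteful. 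The paper instead uses the estimate $|D_{Z_\eps}(\gamma)| \geq |D_{Y_\eps}(\gamma)| - \bigl(|V(Y_\eps)|-|V(Z_\eps)|\bigr)$, i.e.\ subtracting $|A|$ rather than $2|A|$. This sharper bound is available precisely because of the component structure you only invoke later for (iv): since $Z_\eps$ is a union of connected components of the $\Gamma$-groupoid $Y_\eps$, any edge of $Y_\eps$ whose source lies in $V(Z_\eps)$ automatically has its target in the same component, hence in $V(Z_\eps)$. Therefore $D_{Z_\eps}(\gamma) = D_{Y_\eps}(\gamma)\cap V(Z_\eps)$ and $|D_{Y_\eps}(\gamma)\setminus D_{Z_\eps}(\gamma)| = |D_{Y_\eps}(\gamma)\cap A| \leq |A|$, with no factor of $2$. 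Move the groupoid/component observation up so it applies to (ii) and (iii) as well, replace $2|A|$ by $|A|$, and the claimed constants follow.

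A minor remark: your final comment that the hypothesis $\delta_1\leq\eps/13$ is ``used implicitly only to keep the bounds nontrivial'' is fair for your version, but in the paper's proof this hypothesis is used substantively to guarantee that the average degree in $Y_\eps$ is at least $1/2$, which is what lets the paper conclude that discarding low-degree components can only increase the average degree of $Z_\eps$; your alternative derivation of (iv), via dropping to a sum over all of $V(Y_\eps)$, sidesteps that step.
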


\begin{proof}
We first bound the average degree in $Y_{\eps}$.
\begin{align}
\frac{1}{\left|V\left(Y_{\eps}\right)\right|}\sum_{x\in V\left(Y_{\eps}\right)}\deg_{Y_{\eps}}\left(x\right) & \geq\frac{1}{n}\sum_{x\in V\left(Y_{\eps}\right)}\deg_{Y_{\eps}}\left(x\right)\nonumber \\
 & =\frac{1}{n}\int\left|D_{Y_{\eps}}\left(\gamma\right)\right|\dm\left(\gamma\right) & \text{by Fact \ref{fact:deg-and-domain}}\nonumber \\
 & \geq1-\frac{6.5\delta_{1}}{\eps} & \text{by Lemma \ref{lem:Y-graph-analysis}(iii)}\label{eq:Z-graph-analysis-1}\\
 & \geq1/2\,\,\text{.} & \delta_{1}\le\eps/13\label{eq:Z-graph-analysis-2}
\end{align}
Recall that $Z_{\eps}$ is attained from $Y_{\eps}$ by removing the
components of degree smaller than $1/2$, and hence, by (\ref{eq:Z-graph-analysis-2}),
the average degree in $Z_{\eps}$ is larger or equal to that of $Y_{\eps}$.
Thus, by (\ref{eq:Z-graph-analysis-1}), 
\begin{equation}
\frac{1}{\left|V\left(Z_{\eps}\right)\right|}\sum_{x\in V\left(Z_{\eps}\right)}\deg_{Z_{\eps}}\left(x\right)\geq1-\frac{6.5\delta_{1}}{\eps}\,\,\text{.}\label{eq:z-graph-analysis-5}
\end{equation}
To prove (i), we compute
\begin{align}
\left|V\left(Z_{\eps}\right)\right| & =\left|\left\{ x\in V\left(Y_{\eps}\right)\mid\deg_{Y_{\eps}}\left(x\right)\geq\frac{1}{2}\right\} \right|\nonumber \\
 & \geq\left(1-2\left(1-\frac{1}{\left|V\left(Y_{\eps}\right)\right|}\sum_{x\in V\left(Y_{\eps}\right)}\deg_{Y_{\eps}}\left(x\right)\right)\right)\left|V\left(Y_{\eps}\right)\right| & \text{by Markov (\ref{eq:markov-ineq})}\nonumber \\
 & \geq\left(1-\frac{13\delta_{1}}{\eps}\right)\left|V\left(Y_{\eps}\right)\right| & \text{by (\ref{eq:Z-graph-analysis-1})}\label{eq:z-graph-analysis-3}\\
 & \geq\left(1-\frac{16\delta_{1}}{\eps}\right)n\,\,\text{.} & \text{by Lemma \ref{lem:Y-graph-analysis}(i)}\nonumber 
\end{align}
Now, for $\gamma\in\Gamma$,
\begin{align}
\left|D_{Z_{\eps}}\left(\gamma\right)\right| & \geq\left|D_{Y_{\eps}}\left(\gamma\right)\right|-\left(\left|V\left(Y_{\eps}\right)\right|-\left|V\left(Z_{\eps}\right)\right|\right)\nonumber \\
 & \geq\left|D_{Y_{\eps}}\left(\gamma\right)\right|-\frac{13\delta_{1}}{\eps}n\,\,\text{.} & \text{by (\ref{eq:z-graph-analysis-3})}\label{eq:z-graph-analysis-4}
\end{align}
Then (ii) follows from (\ref{eq:z-graph-analysis-4}) and Lemma \ref{lem:Y-graph-analysis}(ii),
and (iii) follows from integrating (\ref{eq:z-graph-analysis-4})
and using Lemma \ref{lem:Y-graph-analysis}(iii).

Finally, we prove (iv). For $x\in V\left(Z_{\eps}\right)$ we have
$\deg_{Z_{\eps}}\left(x\right)\geq\frac{1}{2}$ and thus $\left(\deg_{Z_{\eps}}\left(x\right)\right)^{-1}\leq1+2\left(1-\deg_{Z_{\eps}}\left(x\right)\right)$.
Hence,
\begin{align*}
\sum_{x\in V\left(Z_{\eps}\right)}\left(\deg_{Z_{\eps}}\left(x\right)\right)^{-1} & \leq\sum_{x\in V\left(Z_{\eps}\right)}\left(1+2\left(1-\deg_{Z_{\eps}}\left(x\right)\right)\right)\\
 & =|V\left(Z_{\eps}\right)|\left(1+\frac{2}{|V\left(Z_{\eps}\right)|}\sum_{x\in V\left(Z_{\eps}\right)}\left(1-\deg_{Z_{\eps}}\left(x\right)\right)\right)\\
 & \leq n\left(1+\frac{13\delta_{1}}{\eps}\right)\,\,\text{.} & \text{by (\ref{eq:z-graph-analysis-5})}
\end{align*}
\end{proof}
\subsubsection{From almost vanishing on a coamenable subgroup to a nearby homomorphism}

The following theorem strengthens Theorem \ref{thm:intro-amenable-positive}
(up to the constants). We will use it in Section \ref{sec:special-linear}
in the special case where $\left[\Gamma\colon\Delta\right]<\infty$.
For $\delta>0$ and permutations $\sigma$ and $\tau$, the proof
uses the following notation 
\begin{equation}\label{approx_notation} 
\sigma\approx_{\delta}\tau\quad \mathrm{means}\quad d^{H}\left(\sigma,\tau\right)\leq\delta\,\,\text{.}
\end{equation}

\begin{thm}
\label{thm:almost-trivial-on-finite-index}Let $\Gamma$ be a group,
$f\colon\Gamma\rightarrow\Sym\left(n\right)$ a function and $\Delta\lhd\Gamma$
a normal subgroup such that $\Gamma/\Delta$ is a discrete amenable
group. Write $\delta_{\infty}=\defect_{\infty}\left(f\right)$ and
$\delta_{\Delta}=\sup\left\{ d^{H}\left(f\left(\gamma\right),\id\right)\mid\gamma\in\Delta\right\} $.
Then, there is $n\leq N\leq\left(1+2436\delta_{\infty}+1218\delta_{\Delta}\right)n$
and a homomorphism $h\colon\Gamma\rightarrow\Sym\left(N\right)$ such
that $d_{\infty}\left(h,f\right)\leq4079\delta_{\infty}+2040\delta_{\Delta}$.
\end{thm}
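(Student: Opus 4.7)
The strategy is to reduce to Theorem \ref{thm:intro-amenable-positive} applied to the amenable quotient $\Gamma/\Delta$. Concretely, choose a set-theoretic section $s\colon\Gamma/\Delta\to\Gamma$ of the quotient map, with $s(\bar 1)=1$, and define the descended function $\bar f\colon\Gamma/\Delta\to\Sym(n)$ by $\bar f(\bar\gamma)=f(s(\bar\gamma))$. The first task is to show that $\defect_\infty(\bar f)$ is small in terms of $\delta_\infty$ and $\delta_\Delta$; the second is to lift a flexible homomorphism for $\bar f$ back up to $\Gamma$ and pay only a bounded price in the distance.

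For the first task, given $\bar\gamma_1,\bar\gamma_2\in\Gamma/\Delta$, the element $\delta\coloneqq s(\bar\gamma_1\bar\gamma_2)^{-1}s(\bar\gamma_1)s(\bar\gamma_2)$ lies in $\Delta$, so $s(\bar\gamma_1)s(\bar\gamma_2)=s(\bar\gamma_1\bar\gamma_2)\delta$. Using that $\defect_\infty(f)\leq\delta_\infty$, that $f(\delta)\approx_{\delta_\Delta}\id$, and the bi-invariance of $d^H$ on $\Sym(n)$, one finds
\[
\bar f(\bar\gamma_1)\bar f(\bar\gamma_2)=f(s(\bar\gamma_1))f(s(\bar\gamma_2))\approx_{\delta_\infty}f(s(\bar\gamma_1)s(\bar\gamma_2))=f(s(\bar\gamma_1\bar\gamma_2)\delta)\approx_{\delta_\infty+\delta_\Delta}f(s(\bar\gamma_1\bar\gamma_2))=\bar f(\bar\gamma_1\bar\gamma_2),
\]
using the notation \eqref{approx_notation}. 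So $\defect_\infty(\bar f)\leq 2\delta_\infty+\delta_\Delta$.

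Now apply Theorem \ref{thm:intro-amenable-positive} to $\bar f$ on the amenable group $\Gamma/\Delta$ to obtain a homomorphism $\bar h\colon\Gamma/\Delta\to\Sym(N)$ with
\[
d_\infty(\bar h,\bar f)\leq 2039(2\delta_\infty+\delta_\Delta)=4078\delta_\infty+2039\delta_\Delta,\qquad N\leq(1+1218(2\delta_\infty+\delta_\Delta))n.
\]
Define $h\coloneqq\bar h\circ q$, where $q\colon\Gamma\to\Gamma/\Delta$ is the quotient map; then $h$ is a homomorphism into $\Sym(N)$. For any $\gamma\in\Gamma$, writing $\gamma=s(\bar\gamma)\delta'$ with $\delta'\in\Delta$, the same trick as above gives $f(\gamma)\approx_{\delta_\infty+\delta_\Delta}f(s(\bar\gamma))=\bar f(\bar\gamma)$. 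Combining with the triangle inequality,
\[
d^H(h(\gamma),f(\gamma))\leq d^H(\bar h(\bar\gamma),\bar f(\bar\gamma))+d^H(\bar f(\bar\gamma),f(\gamma))\leq (4078\delta_\infty+2039\delta_\Delta)+(\delta_\infty+\delta_\Delta),
\]
which yields the claimed bound $d_\infty(h,f)\leq 4079\delta_\infty+2040\delta_\Delta$. There is no genuine obstacle here beyond careful bookkeeping of constants; the only delicate point is verifying, at both the defect step and the distance step, that each use of the approximate-homomorphism property is compatible with the bi-invariance of $d^H$, which is what allows the slack on $\Delta$ to cost only an additive $\delta_\Delta$ per invocation rather than a multiplicative factor.
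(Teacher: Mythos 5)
Your proof is correct and follows essentially the same route as the paper's: choose a transversal (your section $s$) of $\Delta$ in $\Gamma$, descend $f$ to a function $\bar f$ on the amenable quotient $\Gamma/\Delta$, bound $\defect_\infty(\bar f)\le 2\delta_\infty+\delta_\Delta$, apply Theorem~\ref{thm:intro-amenable-positive}, and lift the resulting homomorphism back along the quotient map at a further cost of $\delta_\infty+\delta_\Delta$. The bookkeeping of constants matches the paper exactly.
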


\begin{proof}
Let $T$ be a transversal for the set of left cosets of $\Delta$
in $\Gamma$. Define a function $\bar{f}\colon\Gamma/\Delta\rightarrow\Sym\left(n\right)$
by letting $\bar{f}\left(\gamma\Delta\right)=f\left(\gamma\right)$
for $\gamma\in T$. Let $\gamma_{1},\gamma_{2}\in T$ and take $\gamma\in T$
and $\alpha\in\Delta$ such that $\gamma=\gamma_{1}\gamma_{2}\alpha$.
Then
\[
\bar{f}\left(\left(\gamma_{1}\Delta\right)\left(\gamma_{2}\Delta\right)\right)=f\left(\gamma\right)=f\left(\gamma_{1}\gamma_{2}\alpha\right)\approx_{2\delta_{\infty}}f\left(\gamma_{1}\right)f\left(\gamma_{2}\right)f\left(\alpha\right)\approx_{\delta_{\Delta}}f\left(\gamma_{1}\right)f\left(\gamma_{2}\right)=\bar{f}\left(\gamma_{1}\Delta\right)\bar{f}\left(\gamma_{2}\Delta\right)\,\,\text{.}
\]
Hence, $\defect_{\infty}\left(\bar{f}\right)\leq2\delta_{\infty}+\delta_{\Delta}$.
Therefore, by Theorem \ref{thm:intro-amenable-positive} applied to
$\bar{f}$, there is $n\leq N\leq\left(1+1218\left(2\delta_{\infty}+\delta_{\Delta}\right)\right)n$
and a homomorphism $\bar{h}\colon\Gamma/\Delta\rightarrow\Sym\left(N\right)$
such that $d_{\infty}\left(\bar{h},\bar{f}\right)\leq2039\left(2\delta_{\infty}+\delta_{\Delta}\right)$.
Define $h\colon\Gamma\rightarrow\Sym\left(N\right)$ by letting $h\left(\gamma\right)=\bar{h}\left(\gamma\Delta\right)$
for $\gamma\in\Gamma$. Consider $\gamma\in\Gamma$ and take $\alpha\in\Delta$
such that $\gamma\alpha\in T$. Then
\[
h\left(\gamma\right)=\bar{h}\left(\gamma\Delta\right)\approx_{4078\delta_{\infty}+2039\delta_{\Delta}}\bar{f}\left(\gamma\Delta\right)=f\left(\gamma\alpha\right)\approx_{\delta_{\infty}}f\left(\gamma\right)f\left(\alpha\right)\approx_{\delta_{\Delta}}f\left(\gamma\right)\,\,\text{.}
\]
\end{proof}

\section{\label{sec:special-linear} Flexible stability of special linear groups}

Here we prove a more general version of Theorem \ref{thm:intro-SLr}
about the uniform flexible stability of $\SL_{r}A$, $r\geq3$, where
$A$ is either $\ZZ$ or one of many other commutative rings, as discussed
below. For many of those rings, our result applies to $\SL_{2}A$
as well. We follow the method of \cite[Section 5]{BOT} and use Theorems
\ref{thm:intro-amenable-positive} and \ref{thm:almost-trivial-on-finite-index}
together with a well-known theorem about bounded generation \cite{WitteMorris}.

Fron now on, let $A=S^{-1}B$, where $B$ is an order in the ring
of integers $\calO_{K}$ of an algebraic number field $K/\QQ$ and
$S$ is a multiplicative subset of $B$. For example, we can take
$A=\calO_{K}$. Fix $r\geq2$ and assume that at least one of the
following holds:
\begin{itemize}
\item $r\geq3$;
\item $A$ has infinitely many units.
\end{itemize}
The following result is a special case of \cite[Theorem 6.1]{WitteMorris}.
\begin{thm}
\label{thm:bounded-generation}Let $S$ be a conjugation-invariant
subset of $\SL_{r}\left(A\right)$ that contains at least one non-scalar
matrix. Then $\langle S\rangle$ is a finite-index normal subgroup
of $\SL_{r}\left(A\right)$ and there is an integer $C_{r}$, depending
only on $r$, such that every element of the group $\langle S\rangle$
is a product of at most $C_{r}$ elements of $S$ and their inverses.
\end{thm}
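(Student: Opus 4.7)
The plan is to reduce the statement to the classical theorem of Carter--Keller--Paige (in the form due to Witte Morris) that $\SL_r(A)$ is boundedly generated by elementary matrices, the number of factors depending only on $r$. Recall that an elementary matrix has the form $E_{ij}(a) = I + a e_{ij}$ for $i \neq j$ and $a \in A$. If one can (i) show that $\langle S\rangle$ contains every elementary matrix whose parameter lies in some ideal $J \trianglelefteq A$ of finite index, and (ii) each such elementary matrix is a product of a bounded number of elements of $S\cup S^{-1}$, then combining (i)--(ii) with bounded generation of $\SL_r(A)$ by elementary matrices will yield both assertions: $\langle S\rangle$ has finite index (since it contains the elementary congruence subgroup $E(A,J)$, which has finite index in $\SL_r(A)$), and every element is a product of $C_r$ elements of $S^{\pm 1}$.

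The first concrete step is to extract elementary matrices from $S$ via commutators. Fix a non-scalar $g \in S$. Because $S$ is conjugation invariant, $h g h^{-1} \in S$ for every $h \in \SL_r(A)$, so the commutator $[h,g] = (h g h^{-1}) \cdot g^{-1}$ is always a product of two elements of $S^{\pm 1}$. Taking $h = E_{ij}(a)$ and expanding, one finds that $[E_{ij}(a), g]$ differs from the identity by an element whose entries lie in the ideal generated by the off-diagonal entries of $g$ and by the differences of diagonal entries of $g$; this ideal $J_g$ is nonzero since $g$ is non-scalar. Iterating and combining such commutators (using the Steinberg relation $[E_{ij}(a), E_{jk}(b)] = E_{ik}(ab)$ for $r \geq 3$, or long unipotent computations exploiting a unit of infinite order in the $r=2$ case) produces, in a bounded number of steps, genuine elementary matrices $E_{ij}(c)$ for a set of $c$ generating an ideal $J \subseteq J_g$ with $[A:J] < \infty$ (using that $A$ is a one-dimensional Noetherian domain).

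The second step is to invoke the theorem of Carter--Keller--Paige--Witte Morris to write every element of $\SL_r(A)$ as a product of at most $D_r$ elementary matrices $E_{ij}(a_k)$, with $D_r$ depending only on $r$. Restricted to the finite-index subgroup $E(A,J) \subseteq \langle S\rangle$, each factor $E_{ij}(a)$, $a \in J$, is by step~one a bounded-length product of elements of $S^{\pm 1}$; multiplying and re-grouping gives the desired uniform constant $C_r$. Normality of $\langle S\rangle$ is automatic from the conjugation invariance of $S$, and finite index follows once we know $E(A,J) \subseteq \langle S\rangle$.

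The main obstacle is the uniformity of the constant $C_r$ with respect to both $A$ and the particular choice of $g$. For the bounded generation of $\SL_r(A)$ itself this is the deep content of Carter--Keller--Paige (and is assumed as a black box here). But even with that in hand, the commutator extraction in the first step must be done so that the number of factors needed to produce an elementary matrix $E_{ij}(c)$ depends only on $r$, not on $g$ or on the ring-theoretic complexity of $J_g$; this is the delicate point, and it is handled by the structure theory of ideals in $A$ together with an explicit Steinberg-style calculus, carried out in \cite{WitteMorris}.
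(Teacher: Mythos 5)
The paper does not give a proof: Theorem~\ref{thm:bounded-generation} is quoted directly as a special case of \cite[Theorem~6.1]{WitteMorris}, which is already the precise statement about conjugation-invariant subsets. Your proposal instead attempts to derive it from the weaker black box that $\SL_r(A)$ is boundedly generated by elementary matrices, but the reduction has a real gap. In your second step you decompose an element of $E(A,J)$ as a product of at most $D_r$ elementary matrices $E_{i_k j_k}(a_k)$ and then feed each factor into step one; but bounded elementary generation of $\SL_r(A)$ only guarantees $a_k\in A$, not $a_k\in J$, whereas step one only produces bounded words in $S^{\pm 1}$ for $E_{ij}(c)$ with $c\in J$. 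The elementary factors of an element of $E(A,J)$ need not themselves lie in $E(A,J)$, so the two steps do not compose. What is actually needed is bounded generation of the relative elementary group $E(A,J)$ (equivalently, of the congruence subgroup) by elementary matrices with parameters in $J$, uniformly in $J$; this is not a corollary of bounded generation of $\SL_r(A)$ but a separate, technically substantial statement proved in the Carter--Keller--Paige framework via Mennicke symbols. A further issue is that $\langle S\rangle$ may be strictly larger than the subgroup $E(A,J)$ you produce, so even with the congruence-level bounded generation repaired, the general element of $\langle S\rangle$ is unaddressed.

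The commutator-extraction step is likewise only an outline: producing genuine elementary matrices $E_{ij}(c)$, with $c$ generating a finite-index ideal, in a number of steps bounded independently of $g$ and of the arithmetic of the resulting ideal, is exactly the delicate content of \cite{WitteMorris}, to which you defer at the decisive moment. In effect the proposal retraces the shape of the Carter--Keller--Paige/Witte Morris argument rather than giving an independent derivation from the simpler ``$\SL_r(A)$ is boundedly elementarily generated.'' The cleanest correct proof here is the one the paper gives, namely a direct citation of \cite[Theorem~6.1]{WitteMorris}.
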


Let $C_{r}$ be the constant provided by the Theorem \ref{thm:bounded-generation}
(we fixed $r$, but we keep it in the notation for emphasis). The
notation $O\left(F\left(x\right)\right)$ is used in Theorem \ref{thm:special-linear}
to refer to an unspecified real-valued function $g\left(x\right)$
such that $\left|g\left(x\right)\right|\leq M\cdot F\left(x\right)$
for all $x\geq0$, where $M$ is an unspecified absolute constant.
We also use the notation $\approx$ as in (\ref{approx_notation}).
\begin{thm}
\label{thm:special-linear}The group $\SL_{r}\left(A\right)$ is uniformly
flexibly stable with linear rate. More explicitly, let $f\colon\SL_{r}\left(A\right)\rightarrow\Sym\left(n\right)$
be a function and write $\delta=\defect_{\infty}\left(f\right)$.
Then there is $n\leq N\leq\left(1+O\left(\delta\right)\right)n$ and
a homomorphism $h\colon\SL_{r}\left(A\right)\rightarrow\Sym\left(N\right)$
such that $d_{\infty}\left(h,f\right)\leq O\left(C_{r}\delta\right)$.
\end{thm}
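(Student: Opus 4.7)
The plan follows \cite[Section 5]{BOT}: reduce stability of $\SL_r(A)$ to stability on its unitriangular subgroups (handled by Theorem \ref{thm:intro-amenable-positive}), combine the resulting homomorphisms using the Steinberg commutator relations and bounded generation (Theorem \ref{thm:bounded-generation}), and finish via Theorem \ref{thm:almost-trivial-on-finite-index}. Let $U^+$ and $U^-$ denote the upper and lower unitriangular subgroups of $\SL_r(A)$; both are nilpotent and hence discrete amenable. Apply Theorem \ref{thm:intro-amenable-positive} to $f|_{U^+}$ to obtain a homomorphism $h^+ \colon U^+ \to \Sym(N_1)$ with $N_1 \leq (1+O(\delta))n$ and $d_\infty(h^+, f|_{U^+}) = O(\delta)$. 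After modifying $f$ to agree with $h^+$ on $U^+$ (at the cost of increasing $\defect_\infty$ and $d_\infty(\cdot, f)$ by $O(\delta)$), apply the same theorem to the modified function restricted to $U^-$, producing $h^- \colon U^- \to \Sym(N_2)$ analogously. After embedding into a common target $\Sym(N)$ with $N \leq (1+O(\delta))n$, I arrive at $\tilde f \colon \SL_r(A) \to \Sym(N)$ with $\tilde f|_{U^\pm} = h^\pm$, $\defect_\infty(\tilde f) = O(\delta)$, and $d_\infty(\tilde f, f) = O(\delta)$; consistency at $U^+ \cap U^- = \{I\}$ is automatic since $h^\pm(I) = \id$.

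Next, apply Theorem \ref{thm:bounded-generation} to the conjugation-invariant set $S = \bigcup_\gamma \gamma (U^+ \cup U^-) \gamma^{-1}$: this yields a finite-index normal subgroup $\Delta = \langle S \rangle \trianglelefteq \SL_r(A)$ in which every element factors as a product of at most $C_r$ elements of $U^+ \cup U^-$ and their inverses. Using the Steinberg commutator identities (valid in $\SL_r(A)$ for $r \geq 3$, and also for $r = 2$ whenever $A$ has infinitely many units), together with $\tilde f$ being an exact homomorphism on $U^+$ and $U^-$, I aim to define a homomorphism $h_\Delta \colon \Delta \to \Sym(N)$ by the prescription $h_\Delta(u_1 \cdots u_{C_r}) = h^\pm(u_1) \cdots h^\pm(u_{C_r})$. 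Once $h_\Delta$ is established to be well defined (independent of the chosen factorization) and a genuine homomorphism, the approximate-multiplicativity of $\tilde f$ combined with telescoping yields $d^H(h_\Delta(\gamma), \tilde f(\gamma)) = O(C_r \delta)$ for every $\gamma \in \Delta$.

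Finally, since $\SL_r(A) / \Delta$ is finite (in particular amenable), the function $\gamma \mapsto \tilde f(\gamma) h_\Delta(\gamma)^{-1}$ on $\Delta$ is an $O(C_r \delta)$-perturbation of the identity; extending it arbitrarily to $\SL_r(A)$ and applying Theorem \ref{thm:almost-trivial-on-finite-index} yields a homomorphism $h \colon \SL_r(A) \to \Sym(N')$ with $N' \leq (1 + O(C_r \delta))n$ and $d_\infty(h, f) = O(C_r \delta)$, establishing the theorem. The main obstacle is the well-definedness of $h_\Delta$: two length-$C_r$ factorizations of the same element give products of $h^\pm$-images that are a priori only $O(C_r \delta)$-close via $\tilde f$, but we need them to coincide exactly. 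This is where the Steinberg relations enter, forcing rigid compatibility between $h^+$ and $h^-$ (e.g., through identities like $e_{ik}(t) = [e_{ij}(t), e_{jk}(1)]$ with $i,j,k$ distinct, or $e_{12}(t) = [e_{13}(t), e_{32}(1)]$ expressing a $U^+$-element as a commutator involving $U^-$); the rank hypothesis on $r$ is essential precisely to have enough such identities, exactly as in \cite[Section 5]{BOT}.
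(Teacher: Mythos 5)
Your overall framework is right — use amenability of the unitriangular subgroups $U^\pm$ via Theorem~\ref{thm:intro-amenable-positive}, then bounded generation (Theorem~\ref{thm:bounded-generation}), then Theorem~\ref{thm:almost-trivial-on-finite-index} — but the middle of your argument has a genuine gap, and it takes a different (and, I think, unfixable) route through the hard part.

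The central problem is the well-definedness of $h_\Delta$, which you flag but do not resolve. The homomorphisms $h^+\colon U^+\to\Sym(N_1)$ and $h^-\colon U^-\to\Sym(N_2)$ are produced by two \emph{independent} applications of Theorem~\ref{thm:intro-amenable-positive}, and nothing forces them to jointly satisfy the Steinberg relations. The relation $e_{13}(t)=[e_{12}(t),e_{23}(1)]$ lives entirely inside $U^+$ and so is respected by $h^+$ alone; but a relation like $e_{12}(t)=[e_{13}(t),e_{32}(1)]$ mixes $U^+$ and $U^-$, and there is no mechanism making $h^+(e_{12}(t))=[h^+(e_{13}(t)),h^-(e_{32}(1))]$ hold. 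The phrase ``the Steinberg relations enter, forcing rigid compatibility between $h^+$ and $h^-$'' has the logic backwards: the relations are constraints that $(h^+,h^-)$ would have to satisfy for your prescription to be consistent, but they are not automatically satisfied. Two smaller issues compound this. First, with $S=\bigcup_\gamma\gamma(U^+\cup U^-)\gamma^{-1}$, the bounded generation theorem gives factorizations into at most $C_r$ elements of $S$ — i.e.\ conjugates of elements of $U^\pm$, not elements of $U^\pm$ themselves — so the prescription $h_\Delta(u_1\cdots u_{C_r})=h^\pm(u_1)\cdots h^\pm(u_{C_r})$ is not even typecorrect without further work. Second, your final step applies Theorem~\ref{thm:almost-trivial-on-finite-index} to the ``corrected'' function $\gamma\mapsto\tilde f(\gamma)h_\Delta(\gamma)^{-1}$, but that theorem takes as input a function $f$ and a normal subgroup $\Delta$ such that $f$ is near-trivial \emph{on} $\Delta$; it does not accept a nearby homomorphism on $\Delta$ and produce a global one.

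The paper avoids all of this with a slicker observation: since $g^\pm$ (your $h^\pm$) are genuine homomorphisms into $\Sym(N_i)$ with $N_i\le N$, every element of their image is killed by the $N!$-th power, so $g^\pm(U_{ij}^{\pm N!})=\id$ and therefore $f(U_{ij}^{\pm N!})\approx_{O(\delta)}\id$. Approximate multiplicativity then gives $f(\gamma U_{ij}^{\pm N!}\gamma^{-1})\approx_{O(\delta)}\id$ for every $\gamma$. Taking $S=\{\gamma U_{ij}^{\pm N!}\gamma^{-1}\}$ (conjugates of fixed non-scalar matrices), Theorem~\ref{thm:bounded-generation} gives a finite-index normal $\Delta=\langle S\rangle$ with bounded generation, so $f$ is $O(C_r\delta)$-trivial on $\Delta$, and Theorem~\ref{thm:almost-trivial-on-finite-index} finishes. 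No compatibility between $g^+$ and $g^-$ is ever needed — they are used only to show $f$ has small distance to identity on the subset $\{U_{ij}^{\pm N!}\}$, not to assemble a homomorphism by hand. That is the key idea your proposal is missing.
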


\begin{proof}
Let $U^{+}$ and $U^{-}$ be the subgroups of $\SL_{r}\left(A\right)$
of upper and lower triangular unipotent matrices, respectively. Both
$U^{+}$ and $U^{-}$ are nilpotent, and thus they are amenable. Apply
Theorem \ref{thm:intro-amenable-positive} to the restrictions $f\mid_{U^{+}}$
and $f\mid_{U^{-}}$. The theorem provides $N_{1},N_{2}\in\NN$, $n\leq N_{i}\leq\left(1+O\left(\delta\right)\right)n$,
and homomorphisms $g^{+}\colon U^{+}\rightarrow\Sym\left(N_{1}\right)$
and $g^{-}\colon U^{-}\rightarrow\Sym\left(N_{2}\right)$ such that
$d_{\infty}\left(g^{+},f\mid_{U^{+}}\right)\leq O\left(\delta\right)$
and $d_{\infty}\left(g^{-},f\mid_{U^{-}}\right)\leq O\left(\delta\right)$.
Let $N=\max\left\{ N_{1},N_{2}\right\} $.

For distinct $i,j\in\left[r\right]$, let $E_{ij}$ be the $r\times r$
matrix with $1$ in the $\left(i,j\right)$ entry and $0$ elsewhere,
and let $U_{ij}=I_{r}+E_{ij}$. We consider the elements $\left\{ U_{ij}\right\} _{i\neq j}$
and their powers. Let $i\neq j$. If $i<j$, then 
\[
f\left(U_{ij}^{\pm N!}\right)\approx_{O\left(\delta\right)}g^{+}\left(U_{ij}^{\pm N!}\right)=\id_{\Sym\left(N_{1}\right)}\approx_{O\left(\delta\right)}\id_{\Sym\left(n\right)}\,\,\text{.}
\]
Similarly, $f\left(U_{ij}^{\pm N!}\right)\approx_{O\left(\delta\right)}\id_{\Sym\left(n\right)}$
for $i>j$. Hence, for $\gamma\in\SL_{r}\left(A\right)$ and $i\neq j$,
we have
\[
f\left(\gamma U_{ij}^{\pm N!}\gamma^{-1}\right)\approx_{O\left(\delta\right)}f\left(\gamma\right)f\left(U_{ij}^{\pm N!}\right)f\left(\gamma^{-1}\right)\approx_{O\left(\delta\right)}f\left(\gamma\right)f\left(\gamma^{-1}\right)\approx_{2\delta}\id_{\Sym\left(n\right)}
\]
(see (\ref{eq:symmetrization-almost-multiplicative}) for the last
step). Let $S=\left\{ \gamma U_{ij}^{\pm N!}\gamma^{-1}\mid i\neq j,\gamma\in\SL_{r}\left(A\right)\right\} $
and $\Delta=\langle S\rangle$. By Theorem \ref{thm:bounded-generation},
$\Delta$ is a finite-index normal subgroup of $\SL_{r}\left(A\right)$
and every element of $\Delta$ is a product of at most $C_{r}$ elements
of $S$. Hence,
\[
f\left(\gamma\right)\approx_{O\left(C_{r}\delta\right)}\id_{\Sym\left(n\right)}\qquad\forall\gamma\in\Delta\,\,\text{.}
\]
The claim now follows from Theorem \ref{thm:almost-trivial-on-finite-index}.
\end{proof}
\begin{rem}
Fix $r\geq2$ and let $\calC$ be the set consisting of all groups
$\SL_{r}\left(A'\right)$ such that the pair $\left(A',r\right)$
satisfies the conditions from the beginning of the section. Then Theorem
\ref{thm:special-linear} says is that the class $\calC$ is uniformly
flexibly stable with linear rate.
\end{rem}

\section{\label{sec:non-strict}Counterexamples for strict stability: the integers
and the class of finite groups}

This section is devoted to the proof of Theorem \ref{thm:intro-not-strictly-stable}
and its probabilistic version. Both versions are included in the statement
of Theorem \ref{thm:drop-a-point} below.

For the sake of the proof of the probabilistic version, we collect
preliminary facts regarding integration on a space equipped with a
finitely-additive probability measure (or \emph{measure} for short,
see Section \ref{subsec:amenable}). 
In our case, the space is an
amenable group $\Gamma$ equipped with a left- or right- invariant
measure $m$, but the preliminary facts hold regardless of the invariance
property.

Recall that integration is a positive linear functional
$L^{\infty}\left(\Gamma\right)\rightarrow\CC$. 
That is, if the image
of $f\in L^{\infty}\left(\Gamma\right)$ is contained in $\RR_{\geq0}$,
then $\int f\dm\geq0$. 
By the proof of \cite[IV.4.1]{DunfordSchwartz},
this is enough for the Cauchy--Schwarz inequality to hold: $\left|\int f_{1}\overline{f_{2}}\dm\right|^{2}\le\left(\int\left|f_{1}\right|^{2}\dm\right)\left(\int\left|f_{2}\right|^{2}\dm\right)$
for bounded functions $f_{1},f_{2}\colon\Gamma\rightarrow\CC$. 
By taking $g={\bf 1}_\Gamma$ we deduce that $\left| \int f\dm  \right|^2\leq \int\left|  f \right|^2\dm $.
For a bounded vector-valued function $f\colon\Gamma\rightarrow\CC^{d}$, define
$\int f\dm$ by integrating coordinatewise. 
The aforementioned corollary of the Cauchy--Schwarz inequality
extends to this setting:  $\|\int f\dm\|^{2}\leq\int\|f\|^{2}\dm$, where $\|\cdot\|$
is the $L^{2}$ norm on $\CC^{d}$. Finally, for a linear operator
$A\colon\CC^{d}\rightarrow\CC^{d}$, we have $A\int f\dm=\int Af\dm$. 

Write $\Psi_{n}\colon\Sym\left(n\right)\rightarrow\Sym\left(n-1\right)$
for the map given by 
\[
\Psi_{n}\left(\sigma\right)\left(x\right)=\begin{cases}
\sigma\left(x\right) & \sigma\left(x\right)\neq n\\
\sigma\left(\sigma\left(x\right)\right) & \sigma\left(x\right)=n
\end{cases}\qquad\forall\sigma\in\Sym\left(n\right)\forall x\in\left[n-1\right]\,\,\text{.}
\]
For a group $\Gamma$ and a homomorphism $f\colon\Gamma\rightarrow\Sym\left(n\right)$,
define $\hat{f}=\Psi_{n}\circ f$. First, note that 
\begin{equation}
\defect_{\infty}\left(\hat{f}\right)\leq\frac{2}{n-1}\,\,\text{,}\label{eq:negative-strict-small-local-defect}
\end{equation}
a fortiori, $\defect_{1}\left(\hat{f}\right)\leq\frac{2}{n-1}$ if
$\Gamma$ is equipped with a measure. Indeed, for $\gamma_{1},\gamma_{2}\in\Gamma$
and $x\in\left[n-1\right]$, if $x\notin\left\{ f\left(\gamma_{2}\right)^{-1}\left(n\right),f\left(\gamma_{1}\gamma_{2}\right)^{-1}\left(n\right)\right\} $,
then $\hat{f}\left(\gamma_{1}\right)\hat{f}\left(\gamma_{2}\right)\left(x\right)=\hat{f}\left(\gamma_{1}\gamma_{2}\right)\left(x\right)$.
Thus $d^{H}\left(\hat{f}\left(\gamma_{1}\right)\hat{f}\left(\gamma_{2}\right),\hat{f}\left(\gamma_{1}\gamma_{2}\right)\right)\leq\frac{2}{n-1}$,
and (\ref{eq:negative-strict-small-local-defect}) follows. In Theorem
\ref{thm:drop-a-point} we show that if $f$ defines a transitive
action $\Gamma\curvearrowright\left[n\right]$ then $\hat{f}$ is
far from every homomorphism $\Gamma\rightarrow\Sym\left(n-1\right)$.
The proof of the theorem relies on the following observation. 
\begin{lem}
\label{lem:nearby-invariant-vector}Let $\rho\colon\Gamma\rightarrow\U\left(\calH\right)$
be a unitary representation of a (discrete) group $\Gamma$ on a finite-dimensional
complex Hilbert space $\calH$, $\eps>0$ and $v\in\calH$.\renewcommand{\labelenumi}{\roman{enumi})}
\begin{enumerate}
\item If $\|\rho\left(\gamma\right)v-v\|\leq\eps\|v\|$ for every $\gamma\in\Gamma$,
then there is a $\Gamma$-invariant vector $u\in\calH$ such that
$\|u-v\|\leq\frac{\eps}{\sqrt{2}}\|v\|$.
\item If $\Gamma$ is an amenable group equipped with a right-invariant
measure $m$ such that $\int\|\rho\left(\gamma^{-1}\right)v-v\|^{2}\dm\left(\gamma\right)\leq\epsilon^{2}\|v\|^{2}$,
then the vector $w=\int\rho\left(\gamma^{-1}\right)v\dm\left(\gamma\right)$
is $\Gamma$-invariant and satisfies $\|w-v\|\leq\eps\|v\|$.
\end{enumerate}
\end{lem}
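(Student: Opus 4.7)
The plan is to treat the two parts in parallel, reducing both to an averaging computation combined with a Cauchy--Schwarz-style inequality. The only genuine difference is the choice of invariant measure used to average: in (ii) it is the given right-invariant $m$ on $\Gamma$, whereas in (i) I will conjure one up from finite-dimensionality of $\calH$ by passing to a compact group.

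For part (ii), invariance of $w$ follows by pulling $\rho(\gamma_{0})$ inside the integral, writing $\rho(\gamma_{0})\rho(\gamma^{-1})=\rho((\gamma\gamma_{0}^{-1})^{-1})$, and performing the change of variables $\gamma\mapsto\gamma\gamma_{0}$, which leaves $m$ unchanged by right-invariance. For the norm estimate, use $v=\int v\,\dm(\gamma)$ to rewrite $w-v=\int(\rho(\gamma^{-1})v-v)\,\dm(\gamma)$, and apply the $L^{2}$ Cauchy--Schwarz corollary $\|\int f\,\dm\|^{2}\leq\int\|f\|^{2}\,\dm$ recalled just before the lemma; the hypothesis then closes the argument.

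For part (i), let $u$ be the orthogonal projection of $v$ onto the subspace $\calH_{0}$ of $\Gamma$-invariants, and write $v=u+v^{\perp}$ with $v^{\perp}\in\calH_{0}^{\perp}$. Because $\rho$ is unitary, $\calH_{0}^{\perp}$ is $\Gamma$-invariant, and the task reduces to showing $\|v^{\perp}\|\leq(\eps/\sqrt{2})\|v\|$. Since $\Gamma$ need not be amenable, I would replace it by its image: the closure $G$ of $\rho(\Gamma)$ inside $\U(\calH)$ is a closed subgroup of a compact group, hence carries a normalized Haar measure $\mu$. The hypothesis rewrites as $\|\rho(\gamma)v^{\perp}-v^{\perp}\|\leq\eps\|v\|$ (since $\rho(\gamma)u=u$), and by continuity of $g\mapsto\|gv^{\perp}-v^{\perp}\|$ it extends to all $g\in G$. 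The averaged vector $\int_{G}gv^{\perp}\,d\mu(g)$ is $G$-invariant, hence $\Gamma$-invariant, hence lies in $\calH_{0}\cap\calH_{0}^{\perp}=\{0\}$.

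With this vanishing in hand, I would integrate the polarization identity
\[
\|gv^{\perp}-v^{\perp}\|^{2}=2\|v^{\perp}\|^{2}-2\operatorname{Re}\langle gv^{\perp},v^{\perp}\rangle
\]
against $\mu$; the cross term becomes $2\operatorname{Re}\langle 0,v^{\perp}\rangle=0$, and the inequality collapses to $2\|v^{\perp}\|^{2}\leq\eps^{2}\|v\|^{2}$, which is precisely the desired bound. The only thing to justify carefully is the topological passage from $\rho(\Gamma)$ to its closure in $\U(\calH)$; once that is in place, both parts reduce to direct computations and I do not anticipate any substantive obstacle.
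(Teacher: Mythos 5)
Your proof of part (ii) matches the paper's argument word for word: right-invariance of $m$ and the change of variables $\gamma\mapsto\gamma\gamma_{0}$ give $\Gamma$-invariance of $w$, and the Cauchy--Schwarz corollary $\|\int f\,\dm\|^{2}\leq\int\|f\|^{2}\,\dm$ closes the norm estimate. No differences there.

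Your proof of part (i) is correct but takes a genuinely different route from what the paper writes out. The paper invokes \cite[Propositions 1.1.5 and 1.1.9]{BHV} for the sharp constant $\eps/\sqrt{2}$ and only reproduces a self-contained sketch that gives the weaker bound $\eps\|v\|$, via the Ryll-Nardzewski-style argument: take the closed convex hull $C$ of the orbit, observe $C$ lies in the ball of radius $\eps\|v\|$ around $v$ and is $\rho(\Gamma)$-invariant, then take $u$ to be the unique minimal-norm point of $C$ and note it is fixed since each $\rho(\gamma)$ is an isometry preserving $C$. Your argument instead takes $u$ to be the orthogonal projection of $v$ onto $\calH_{0}=\calH^{\Gamma}$, reduces to bounding $\|v^{\perp}\|$, passes to the compact closure $G$ of $\rho(\Gamma)$ in the compact group $\U(\calH)$ (this is where finite-dimensionality is essential), and integrates the polarization identity against Haar measure on $G$; the cross term vanishes because $\int_{G}gv^{\perp}\,d\mu(g)$ lands in $\calH_{0}\cap\calH_{0}^{\perp}=\{0\}$. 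You use implicitly that $\calH_{0}^{\perp}$, being closed and $\rho(\Gamma)$-invariant, is automatically $G$-invariant, so the average stays in $\calH_{0}^{\perp}$; worth stating, but it is a one-line observation. The upshot is that your argument is fully self-contained and actually proves the sharp $\eps/\sqrt{2}$ constant that the paper only cites, at the modest cost of the topological detour through $\overline{\rho(\Gamma)}$; the paper's convex-hull sketch is shorter and avoids compactness but, as written, only delivers the weaker bound and leans on the external reference for the factor of $\sqrt{2}$.
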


\begin{proof}
(i) The claim follows from the argument presented in \cite[Propositions 1.1.5 and 1.1.9]{BHV}.
Here we recall a part of the argument, which yields a weaker bound.
Let $C$ be the closed convex hull of the orbit $\rho\left(\Gamma\right)v$
of $v$. Then $C$ is contained in the closed ball of radius $\eps\|v\|$
centered at $v$, and there is a unique point $u$ in $C$ of minimal
norm. Clearly $\|u-v\|\leq\eps\|v\|$. Furthermore, for $\gamma\in\Gamma$
we have $\rho\left(\gamma\right)\left(\rho\left(\Gamma\right)v\right)=\rho\left(\Gamma\right)v$
and thus $\rho\left(\gamma\right)C=C$. Then $\rho\left(\gamma\right)u=u$
since $\rho\left(\gamma\right)$ is norm preserving.

(ii) The right invariance of $m$ implies that $w$ is $\Gamma$-invariant.
Indeed, for $\gamma_{0}\in\Gamma$,

\[
\rho\left(\gamma_{0}\right)w=\rho\left(\gamma_{0}\right)\int\rho\left(\gamma^{-1}\right)v\dm\left(\gamma\right)=\int\rho\left(\left(\gamma\gamma_{0}^{-1}\right)^{-1}\right)v\dm\left(\gamma\right)=\int\rho\left(\gamma^{-1}\right)v\dm\left(\gamma\right)=w\,\,\text{.}
\]
Furthermore,
\[
\|w-v\|^{2}=\|\int\left(\rho\left(\gamma^{-1}\right)v-v\right)\dm\left(\gamma\right)\|^{2}\leq\int\|\rho\left(\gamma^{-1}\right)v-v\|^{2}\dm\left(\gamma\right)\leq\eps^{2}\|v\|^{2}\,\,\text{.}
\]
\end{proof}
For a finite set $X$, write $e_{x}\colon X\rightarrow\CC$ for the
function given by $e_{x}\left(x'\right)=\begin{cases}
1 & x'=x\\
0 & x'\neq x
\end{cases}$ for $x'\in X$. Write $L^{2}\left(X\right)$ for the finite-dimensional
complex Hilbert space of functions $X\rightarrow\CC$, endowed with
the unique Hermitian product such that $\left\{ e_{x}\right\} _{x\in X}$
is an orthonormal basis. Note that for $x,y\in X$, $\|e_{x}-e_{y}\|^{2}=2\cdot{\bf 1}_{x\neq y}=\begin{cases}
0 & x=y\\
2 & x\neq y
\end{cases}$. For a group action $f\colon\Gamma\to \Sym\left(X\right)$, write $\rho_f:\Gamma\to\U\left(L^2\left(X\right)\right)$ for the unitary representation given by $\rho_f\left(\gamma\right)e_x=e_{f\left(\gamma\right) x}$ for $x\in X$.

For finite sets $X$ and $Y$, $x\in X$ and $y\in Y$, write $E_{x,y}\colon L^{2}\left(X\right)\rightarrow L^{2}\left(Y\right)$
for the linear map such that $E_{x,y}\left(e_{x'}\right)=\begin{cases}
e_{y} & x=x'\\
0 & x\neq x'
\end{cases}$ for all $x'\in X$. We make the space $\Lin_{X,Y}\coloneqq\hom_{\CC}\left(L^{2}\left(X\right),L^{2}\left(Y\right)\right)$
of linear maps into a complex Hilbert space by endowing it with the
unique Hermitian product such that $\left\{ E_{x,y}\mid x\in X,y\in Y\right\} $
is an orthonormal basis. 
A pair of group actions $h\colon\Gamma\rightarrow\Sym\left(X\right)$ and $f\colon\Gamma\rightarrow\Sym\left(Y\right)$ gives rise to  a unitary representation $\rho_{h,f}\colon\Gamma\rightarrow\U\left(\Lin_{X,Y}\right)$
defined by
\[
\rho_{h,f}\left(\gamma\right)T=\rho_{f}\left(\gamma\right)\circ T\circ\rho_{h}\left(\gamma^{-1}\right)\qquad\forall\gamma\in\Gamma\forall T\in\Lin_{X,Y}\,\,\text{.}
\]

For $n\in\NN$, write $T_{n-1}\colon L^{2}\left(\left[n-1\right]\right)\rightarrow L^{2}\left(\left[n\right]\right)$
for the linear extension of the inclusion map $\left[n-1\right]\hookrightarrow\left[n\right]$.
Then $\|T_{n-1}\|^{2}=n-1$. For $x\in\left[n-1\right]$, the notation
$e_{x}$ can be used both for a function $\left[n-1\right]\rightarrow\CC$
and for a function $\left[n\right]\rightarrow\CC$. The domain should
be understood from the context. In particular, we write $T_{n-1}\left(e_{x}\right)=e_{x}$.

The proof of Theorem \ref{thm:drop-a-point} makes use of the following
lemma.
\begin{lem}
\label{lem:PropertyT-transitive-isolated}\cite[Proposition 2.4(ii)]{BeckerLubotzky}
Let $\Gamma$ be a group. Take $n\geq 2$ and group homomorphisms
$h\colon\Gamma\rightarrow\Sym\left(n-1\right)$ and
$f\colon\Gamma\rightarrow\Sym\left(n\right)$
such that $f$ defines a transitive action of $\Gamma$ on $\left[n\right]$.
Endow $L^2\left(\left[n-1\right]\right)$ and $L^2\left(\left[n\right]\right)$, respectively, with the representations $\rho_h$ and $\rho_f$.
Then $\|T_{n-1}-T'\|\geq\frac{1}{\sqrt{2}}\|T_{n-1}\|$ for every
morphism of representations $T'\colon L^{2}\left(\left[n-1\right]\right)\rightarrow L^{2}\left(\left[n\right]\right)$.
\end{lem}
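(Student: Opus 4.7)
The approach is to compute the orthogonal projection of $T_{n-1}$ onto the finite-dimensional subspace of equivariant (intertwining) maps and show that this projection has small norm. Writing $P$ for this orthogonal projection,
\[
\inf_{T' \text{ equivariant}} \|T_{n-1} - T'\|^2 = \|T_{n-1}\|^2 - \|P T_{n-1}\|^2,
\]
so since $\|T_{n-1}\|^2 = n - 1$, the inequality of the lemma is equivalent to the bound $\|P T_{n-1}\|^2 \leq (n-1)/2$.

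First I would transport the problem to a combinatorial setting. Identify a linear map $T \colon L^{2}([n-1]) \to L^{2}([n])$ with its matrix $M(y, x) = \langle T e_x, e_y\rangle$, so that the Hilbert--Schmidt inner product becomes the standard inner product on $\ell^{2}([n] \times [n-1])$. Under this identification, $\rho_{h,f}$-equivariance of $T$ translates to $\Gamma$-invariance of $M$ under the diagonal action $\gamma \cdot (y, x) = (f(\gamma) y, h(\gamma) x)$, and the orthogonal projection $P$ onto equivariant maps corresponds to averaging $M$ over the (finite) $\Gamma$-orbits in $[n] \times [n-1]$. In this language $T_{n-1}$ corresponds to the indicator $\mathbf{1}_\Delta$ of the partial diagonal $\Delta = \{(x, x) : x \in [n-1]\}$, so
\[
\|P T_{n-1}\|^{2} = \sum_{O} \frac{|O \cap \Delta|^{2}}{|O|},
\]
where the sum ranges over $\Gamma$-orbits $O$ of $[n] \times [n-1]$.

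The crucial estimate will be $|O \cap \Delta| \leq |O|/2$ for every orbit $O$, and this is where transitivity of $f$ enters decisively. The first-coordinate projection $O \to [n]$ is surjective with all fibers of the same size $d := |O|/n$. The $d$ elements of $O$ lying over $n \in [n]$ automatically avoid $\Delta$ (since $n \notin [n-1]$), so I would split into two cases. If $d \geq 2$, then $|O|/2 = nd/2 \geq n > n - 1 \geq |O \cap \Delta|$, using only the trivial bound $|O \cap \Delta| \leq |\Delta| = n - 1$. If $d = 1$, the orbit is the graph of an equivariant map $\phi \colon [n] \to [n-1]$; the equivalence relation ``$\phi(a) = \phi(b)$'' on $[n]$ is $f$-invariant and hence, by transitivity of $f$, either trivial or total, and it cannot be trivial because $|[n]| > |[n-1]|$. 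Thus $\phi$ is a constant $c$, which must be an $h$-fixed point by equivariance, and $O = [n] \times \{c\}$, giving $|O \cap \Delta| = 1 \leq n/2$ (using $n \geq 2$).

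The lemma then follows immediately by summing: since $|O \cap \Delta|^{2}/|O| \leq |O \cap \Delta|/2$ and $\sum_{O} |O \cap \Delta| = |\Delta| = n - 1$, one obtains $\|P T_{n-1}\|^{2} \leq (n-1)/2$ as required. The main obstacle in this plan is the degenerate orbit case $d = 1$, where the structural argument using $f$-transitivity (to rule out non-constant equivariant sections $[n] \to [n-1]$) is needed; for larger orbits the bound $|O|/2 \geq n$ is very comfortable, so essentially no additional work is required.
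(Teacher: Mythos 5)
The paper does not actually prove this lemma --- it cites \cite[Proposition 2.4(ii)]{BeckerLubotzky} --- so there is no in-paper proof to compare against. Your overall strategy (reduce to bounding the norm of the orthogonal projection $PT_{n-1}$ onto equivariant maps, identify this with averaging the indicator $\mathbf{1}_\Delta$ over $\Gamma$-orbits in $[n]\times[n-1]$, and prove $|O\cap\Delta|\le|O|/2$ orbit by orbit) is correct and self-contained, and the reduction steps, the formula $\|PT_{n-1}\|^2=\sum_O |O\cap\Delta|^2/|O|$, and the $d\ge 2$ case are all fine.

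However, there is a genuine error in the case $d=1$. You assert that the $f$-invariant equivalence relation ``$\phi(a)=\phi(b)$'' on $[n]$ must be ``either trivial or total'' because $f$ is transitive, and conclude $\phi$ is constant. This is false: transitivity does not imply primitivity, and a transitive action can admit nontrivial proper invariant block systems. Concretely, take $n=4$, let $f$ be the regular action of $\ZZ/4\ZZ$ on $[4]$, and let $h(1)=(1\,2)\in\Sym(3)$; then $\phi(1)=1,\phi(2)=2,\phi(3)=1,\phi(4)=2$ is a nonconstant equivariant map with block system $\{\{1,3\},\{2,4\}\}$, and $|O\cap\Delta|=2$, not $1$. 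So the intermediate claim fails, even though your final inequality $|O\cap\Delta|\le n/2=|O|/2$ happens to hold in this example.

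The argument is salvageable, and the correct version is not much longer. In the $d=1$ case, the fibers $\phi^{-1}(c)$ form an $f$-invariant partition of $[n]$, so by transitivity all blocks have the same size $s$; since $\phi$ cannot be injective ($n>n-1$), $s\ge 2$, so there are at most $n/s\le n/2$ blocks. Each block $B$ is mapped by $\phi$ to a single point $c_B\in[n-1]$, and $y\in B$ satisfies $\phi(y)=y$ only if $y=c_B$, so each block contributes at most one element to $O\cap\Delta$. Hence $|O\cap\Delta|\le n/2=|O|/2$, which is exactly the bound you need (and, by the example above, this bound is sharp for individual orbits). With this replacement, your proof is correct.
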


\begin{thm}
\label{thm:drop-a-point}Let $\Gamma$ be a group and $f\colon\Gamma\rightarrow\Sym\left(n\right)$,
$n\geq2$, a homomorphism that defines a transitive action $\Gamma\curvearrowright\left[n\right]$.
Let $h\colon\Gamma\rightarrow\Sym\left(n-1\right)$ be an arbitrary
homomorphism. Then $\defect_{\infty}\left(\hat{f}\right)\leq\frac{2}{n-1}$
and $d_{\infty}\left(h,\hat{f}\right)\geq\frac{1}{2}-\frac{1}{n-1}$.

Furthermore, if $\Gamma$ is a discrete amenable group equipped with
a right-invariant measure $m$, then $\defect_{1}\left(\hat{f}\right)\leq\frac{2}{n-1}$
and $d_{1}\left(h,\hat{f}\right)\geq\frac{1}{4}-\frac{1}{n-1}$.
\end{thm}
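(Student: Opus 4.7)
The plan is to deduce both lower bounds from Lemma \ref{lem:PropertyT-transitive-isolated} by showing that $T_{n-1}$ is an approximately invariant vector for the representation $\rho_{h,f}$ whenever $h$ is uniformly (or on average) close to $\hat{f}$. The defect bounds $\defect_\infty(\hat{f})\leq 2/(n-1)$ and $\defect_1(\hat{f})\leq 2/(n-1)$ are already established in (\ref{eq:negative-strict-small-local-defect}), so the real content is the two distance lower bounds.

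The first step I would carry out is the pointwise identity
\[
\|\rho_{h,f}(\gamma^{-1})T_{n-1}-T_{n-1}\|^{2}=2\bigl|\bigl\{x\in[n-1]:f(\gamma)x\neq h(\gamma)x\bigr\}\bigr|\qquad\forall\gamma\in\Gamma,
\]
which follows by evaluating on the orthonormal basis $\{e_{x}\}_{x\in[n-1]}$ using $\rho_{h,f}(\gamma^{-1})T_{n-1}e_{x}=e_{f(\gamma^{-1})h(\gamma)x}$ and the fact that $h(\gamma)x\in[n-1]$. To pass from the action $f$ to the deformation $\hat{f}$, I would use the observation that $\hat{f}(\gamma)$ and $f(\gamma)|_{[n-1]}$ coincide on $[n-1]$ except possibly at the single point $x_{0}=f(\gamma)^{-1}(n)$, where $f(\gamma)x_{0}=n\notin[n-1]$ while $h(\gamma)x_{0}\in[n-1]$. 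Hence the sets $\{x\in[n-1]:f(\gamma)x\neq h(\gamma)x\}$ and $\{x\in[n-1]:\hat{f}(\gamma)x\neq h(\gamma)x\}$ differ in cardinality by at most one, yielding the pointwise estimate
\[
\|\rho_{h,f}(\gamma^{-1})T_{n-1}-T_{n-1}\|^{2}\leq 2\bigl(d^{H}(h(\gamma),\hat{f}(\gamma))(n-1)+1\bigr).
\]

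For the uniform conclusion, take $\eps=d_{\infty}(h,\hat{f})$; the estimate above becomes $\|\rho_{h,f}(\gamma^{-1})T_{n-1}-T_{n-1}\|^{2}\leq(2\eps+2/(n-1))\|T_{n-1}\|^{2}$ uniformly in $\gamma$, so Lemma \ref{lem:nearby-invariant-vector}(i) produces a $\rho_{h,f}$-invariant $u$, i.e.\ an intertwiner $L^{2}([n-1])\to L^{2}([n])$, with $\|u-T_{n-1}\|^{2}\leq\eps(n-1)+1$. On the other hand, Lemma \ref{lem:PropertyT-transitive-isolated} forces $\|u-T_{n-1}\|^{2}\geq(n-1)/2$, and comparing gives $\eps\geq 1/2-1/(n-1)$. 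For the probabilistic conclusion I would integrate the pointwise estimate to obtain $\int\|\rho_{h,f}(\gamma^{-1})T_{n-1}-T_{n-1}\|^{2}\dm(\gamma)\leq(2d_{1}(h,\hat{f})+2/(n-1))\|T_{n-1}\|^{2}$ and then invoke Lemma \ref{lem:nearby-invariant-vector}(ii), which is available precisely because $m$ is right-invariant; the same comparison with Lemma \ref{lem:PropertyT-transitive-isolated} then yields $d_{1}(h,\hat{f})\geq 1/4-1/(n-1)$. The only genuinely subtle step is the one-point discrepancy between $f|_{[n-1]}$ and $\hat{f}$ that converts the natural $f$-versus-$h$ disagreement into a bound in terms of $\hat{f}$; once that is in place, everything else is a mechanical application of the two provided lemmas.
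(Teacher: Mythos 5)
Your proposal is correct and follows essentially the same route as the paper: both compute $\|\rho_{h,f}(\gamma^{-1})T_{n-1}-T_{n-1}\|^{2}$ via the orthonormal basis, pass from $f$ to $\hat{f}$ by the one-point discrepancy (your set-cardinality comparison is just the paper's indicator-function triangle inequality in disguise), and then combine Lemma~\ref{lem:nearby-invariant-vector}(i)/(ii) with Lemma~\ref{lem:PropertyT-transitive-isolated} exactly as the paper does.
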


\begin{proof}
By (\ref{eq:negative-strict-small-local-defect}), $\defect_{\infty}\left(\hat{f}\right)\leq\frac{2}{n-1}$.
Now, we have an action $h\colon\Gamma\rightarrow\Sym\left(\left[n-1\right]\right)$
and a transitive action $f\colon\Gamma\rightarrow\Sym\left(\left[n\right]\right)$.
Consider the representation $\rho\coloneqq\rho_{h,f}\colon\Gamma\rightarrow\U\left(\Lin_{\left[n-1\right],\left[n\right]}\right)$.
For every $\gamma\in\Gamma$,
\begin{align}
\|\rho\left(\gamma^{-1}\right)T_{n-1}-T_{n-1}\|^{2} & =\sum_{x\in\left[n-1\right]}\|\left(\rho_{f}\left(\gamma^{-1}\right)\circ T_{n-1}\circ\rho_{h}\left(\gamma\right)-T_{n-1}\right)e_{x}\|^{2}\nonumber \\
 & =\sum_{x\in\left[n-1\right]}\|\left(T_{n-1}\circ\rho_{h}\left(\gamma\right)-\rho_{f}\left(\gamma\right)\circ T_{n-1}\right)e_{x}\|^{2} & \text{\ensuremath{\rho_{f}\left(\gamma\right)} preserves norm}\nonumber \\
 & =\sum_{x\in\left[n-1\right]}\|e_{h\left(\gamma\right)\left(x\right)}-e_{f\left(\gamma\right)\left(x\right)}\|^{2}\nonumber \\
 & =\sum_{x\in\left[n-1\right]}2\cdot{\bf 1}_{h\left(\gamma\right)\left(x\right)\neq f\left(\gamma\right)\left(x\right)}\nonumber \\
 & \leq2\sum_{x\in\left[n-1\right]}\left({\bf 1}_{h\left(\gamma\right)\left(x\right)\neq\hat{f}\left(\gamma\right)\left(x\right)}+{\bf 1}_{\hat{f}\left(\gamma\right)\left(x\right)\neq f\left(\gamma\right)\left(x\right)}\right) & \text{triangle inequality}\nonumber \\
 & \leq2\left(\sum_{x\in\left[n-1\right]}{\bf 1}_{h\left(\gamma\right)\left(x\right)\neq\hat{f}\left(\gamma\right)\left(x\right)}+1\right)\nonumber \\
 & =2\left(d^{H}\left(h\left(\gamma\right),\hat{f}\left(\gamma\right)\right)+\frac{1}{n-1}\right)\|T_{n-1}\|^{2}\,\,\text{.}\label{eq:drop-a-point-computation}
\end{align}
Hence, for $\eps=\left(2\left(d_{\infty}\left(h,\hat{f}\right)+\frac{1}{n-1}\right)\right)^{1/2}$
we have
\[
\|\rho\left(\gamma\right)T_{n-1}-T_{n-1}\|\leq\eps\|T_{n-1}\|\qquad\forall\gamma\in\Gamma\,\,\text{.}
\]
By Lemma \ref{lem:nearby-invariant-vector}(i), there is $T'\in\Lin_{\left[n-1\right],\left[n\right]}$
such that $\|T'-T_{n-1}\|\leq\frac{\eps}{\sqrt{2}}\|T_{n-1}\|$ and
$\rho\left(\gamma\right)T'=T'$ for every $\gamma\in\Gamma$. The
latter condition means that $T'\colon L^{2}\left(\left[n-1\right]\right)\rightarrow L^{2}\left(\left[n\right]\right)$
is a morphism of representations. Therefore, $\eps\geq1$ by Lemma
\ref{lem:PropertyT-transitive-isolated}. Hence $d_{\infty}\left(h,\hat{f}\right)\geq\frac{1}{2}-\frac{1}{n-1}$.

Now, assume that $m$ is a right-invariant measure on $\Gamma$. Then
(\ref{eq:negative-strict-small-local-defect}) implies that $\defect_{1}\left(\hat{f}\right)\leq\frac{2}{n-1}$.
By integrating (\ref{eq:drop-a-point-computation}), we see that for
$\alpha=\sqrt{2}\left(d_{1}\left(h,\hat{f}\right)+\frac{1}{n-1}\right)^{1/2}$,
\[
\int\|\rho\left(\gamma^{-1}\right)T_{n-1}-T_{n-1}\|^{2}\dm\left(\gamma\right)\le2\|T_{n-1}\|^{2}\int\left(d^{H}\left(h\left(\gamma\right),\hat{f}\left(\gamma\right)\right)+\frac{1}{n-1}\right)\dm\left(\gamma\right)=\alpha^{2}\|T_{n-1}\|^{2}\,\,\text{.}
\]
By Lemma \ref{lem:nearby-invariant-vector}(ii), there is $T''\in W$
such that $\|T''-T_{n-1}\|\leq\alpha\|T_{n-1}\|$ and $T''$ is a
morphism of representations. Using Lemma \ref{lem:PropertyT-transitive-isolated}
as before, we see that $\alpha\geq\frac{1}{\sqrt{2}}$ and thus $d_{1}\left(h,\hat{f}\right)\geq\frac{1}{4}-\frac{1}{n-1}$.
\end{proof}
\begin{rem}
Let $\Gamma$ be a group with finite quotients of unbounded cardinality.
Theorem \ref{thm:drop-a-point} implies that $\Gamma$ is not uniformly
strictly stable. By \cite[Theorem 1.4]{BeckerLubotzky}, if we assume
further that $\Gamma$ has property $\ptau$, then $\Gamma$ is not pointwise
strictly stable. Both the uniform and the pointwise versions are proved
by considering $\hat{f}\colon\Gamma\rightarrow\Sym\left(n-1\right)$,
where $f\colon\Gamma\rightarrow\Sym\left(n\right)$ is a transitive
action.
\end{rem}

\section{\label{sec:non-flexible}Counterexamples for flexible stability:
free groups}

This section is devoted to the proof of Theorem \ref{thm: Free_gps_are_not_FUSP-1}.
The proof involves a construction that combines exponent reduction
on words in a free group, inspired by \cite{Rolli}, with a pinched
grid construction, inspired by \cite[Section 5]{BeckerMosheiff}.

Let $F_{2}$ be the free group on $\left\{ x_{1},x_{2}\right\} $.
Let $k$ be a positive integer. For an integer $t\geq0$, let $\overline{t}$
be the unique element of $C_{k}=\left\{ 0,\dotsc,k-1\right\} $ that
is congruent to $t$ modulo $k$. For $t<0$ we let $\overline{t}=-\overline{\left(-t\right)}$.
For example, if $k=5$ then $\overline{7}=2$ and $\overline{-12}=-2$.
Define $\alpha'_{1},\alpha_{2}\in\Sym\left(C_{k}\times C_{k}\right)$
as follows:
\[
\alpha'_{1}\left(\left(i,j\right)\right)=\left(\overline{i+1},j\right)\,\,\text{and}\,\,\alpha_{2}\left(\left(i,j\right)\right)=\left(i,\overline{j+1}\right)\quad\forall\left(i,j\right)\in C_{k}\times C_{k}
\]
and let $\alpha_{1}=\alpha'_{1}\circ\tau$, where $\tau\in\Sym\left(C_{k}\times C_{k}\right)$
is the transposition that swaps $\left(0,0\right)$ and $\left(0,1\right)$.
The actions of $\alpha_{1}$ and $\alpha_{2}$ on $C_{k}\times C_{k}$
are depicted in Figure \ref{fig:free-groups1}.

\begin{figure}
\begin{center}
\begin{adjustwidth*}{0em}{-4em}   
 \begin{tikzpicture}[scale=1.5]
 \node[draw, color=black] (x00) at (0,0) {\tiny $0,0$}; 
\node[draw, color=black] (x01) at (0,0.8) {\tiny $0,1$};  
\node[draw, color=black] (x10) at (1.3,0) {\tiny $1,0$}; 
\node[draw, color=black] (x11) at (1.3,0.8) {\tiny $1,1$}; 
\node[draw, color=black] (x60) at (8.5,0) {\tiny $k-1,0$};  
\node[draw, color=black] (x66) at (8.5,4.8) {\tiny $k-1,k-1$}; 
\node[draw, color=black] (x06) at (0,4.8) {\tiny $0,k-1$}; 
\node[draw, color=black] (x16) at (1.3,4.8) {\tiny $1,k-1$}; 
\node[draw, color=black] (x61) at (8.5,0.8) {\tiny $k-1,1$};
\node[draw, color=black] (x62) at (8.5,1.6) {\tiny $k-1,2$}; 
\node[draw, color= black] (x02) at (0,1.6) {\tiny $0,2$}; 
\node[draw, color=black] (x12) at (1.3,1.6) {\tiny $1,2$};
\node[draw, color=black] (x22) at (2.6,1.6) {\tiny $2,2$}; 
\node[draw, color=black] (x52) at (6.8,1.6) {\tiny $k-2,2$}; 
\node[draw, color=black] (x51) at (6.8,0.8) {\tiny $k-2,1$};
\node[draw, color=black] (x50) at (6.8,0) {\tiny $k-2,0$}; 
\node[draw, color=black] (x56) at (6.8,4.8) {\tiny $k-2,k-1$}; 
\node[draw, color=black] (x55) at (6.8,4) {\tiny $k-2,k-2$}; 
\node[draw, color=black] (x65) at (8.5,4) {\tiny $k-1,k-2$}; 
\node[draw, color=black] (x05) at (0,4) {\tiny $0,k-2$}; 
\node[draw, color=black] (x15) at (1.3,4) {\tiny $1,k-2$}; 
\node[draw, color=black] (x25) at (2.6,4) {\tiny $2,k-2$}; 
\node[draw, color=black] (x26) at (2.6,4.8) {\tiny $2,k-1$}; 
\node[draw, color=black] (x20) at (2.6,0) {\tiny $2,0$}; 
\node[draw, color=black] (x21) at (2.6,0.8) {\tiny $2,1$}; 
\node[draw, color=black] (x40) at (5.2,0) {\tiny $k-3,0$}; 
\node[draw, color=black] (x41) at (5.2,0.8) {\tiny $k-3,1$}; 
\node[draw, color=black] (x42) at (5.2,1.6) {\tiny $k-3,2$};
\node[draw, color=black] (x44) at (5.2,3.2) {\tiny $k-3,k-3$}; 
\node[draw, color=black] (x45) at (5.2,4) {\tiny $k-3,k-2$}; 
\node[draw, color=black] (x46) at (5.2,4.8) {\tiny $k-3,k-1$}; 
\node[draw, color=black] (x04) at (0,3.2) {\tiny $0,k-3$}; 
\node[draw, color=black] (x14) at (1.3,3.2) {\tiny $1,k-3$}; 
\node[draw, color=black] (x24) at (2.6,3.2) {\tiny $2,k-3$}; 
\node[draw, color=black] (x54) at (6.8,3.2) {\tiny $k-2,k-3$}; 
\node[draw, color=black] (x64) at (8.5,3.2) {\tiny $k-1,k-3$}; 
\node[draw=none, color=black] (x30) at (3.9,0) { $\dots$}; 
\node[draw=none, color=black] (x31) at (3.9,0.8) { $\dots$}; 
\node[draw=none, color=black] (x32) at (3.9,1.6) { $\dots$}; 
\node[draw=none, color=black] (x35) at (3.9,4) { $\dots$}; 
\node[draw=none, color=black] (x36) at (3.9,4.8) { $\dots$};
\node[draw=none, color=black] (x34) at (3.9,3.2) { $\dots$}; 
\node[draw=none, color=black] (x03) at (0,2.4) { $\vdots$}; 
\node[draw=none, color=black] (x13) at (1.3,2.4) { $\vdots$}; 
\node[draw=none, color=black] (x23) at (2.6,2.4) { $\vdots$}; 
\node[draw=none, color=black] (x43) at (5.2,2.4) { $\vdots$}; 
\node[draw=none, color=black] (x53) at (6.8,2.4) { $\vdots$}; 
\node[draw=none, color=black] (x63) at (8.5,2.4) { $\vdots$};

\draw[purple,->, solid] (x11)--(x21) node[midway,above]{\scriptsize$\alpha_1$}; \draw[purple,->, solid] (x01)--(x10) node[midway,above]{\scriptsize$\alpha_1$}; \draw[purple,->, solid] (x61) to  [out=15,in=165,looseness=0.5] (x01) ;  \draw[purple,->, solid] (x21)--(x31) node[midway,above]{\scriptsize$\alpha_1$}; \draw[purple,->, solid] (x51)--(x61) node[midway,above]{\scriptsize$\alpha_1$}; \draw[purple,->, solid] (x51)--(x61) node[midway,above]{\scriptsize$\alpha_1$}; \draw[purple,->, solid] (x41)--(x51) node[midway,above]{\scriptsize$\alpha_1$}; \draw[purple,->, solid] (x31)--(x41) node[midway,above]{\scriptsize$\alpha_1$};
\draw[purple,->, solid] (x10)--(x20) node[midway,above]{\scriptsize$\alpha_1$}; \draw[purple,->, solid] (x00)--(x11) node[midway,above]{\scriptsize$\alpha_1$}; \draw[purple,->, solid] (x60) to  [out=15,in=165,looseness=0.5] (x00) ; \draw[purple,->, solid] (x20)--(x30) node[midway,above]{\scriptsize$\alpha_1$}; \draw[purple,->, solid] (x50)--(x60) node[midway,above]{\scriptsize$\alpha_1$}; \draw[purple,->, solid] (x50)--(x60) node[midway,above]{\scriptsize$\alpha_1$}; \draw[purple,->, solid] (x40)--(x50) node[midway,above]{\scriptsize$\alpha_1$}; \draw[purple,->, solid] (x30)--(x40) node[midway,above]{\scriptsize$\alpha_1$};
\draw[purple,->, solid] (x12)--(x22) node[midway,above]{\scriptsize$\alpha_1$}; \draw[purple,->, solid] (x02)--(x12) node[midway,above]{\scriptsize$\alpha_1$}; \draw[purple,->, solid] (x62) to  [out=15,in=165,looseness=0.5] (x02)  ; \draw[purple,->, solid] (x22)--(x32) node[midway,above]{\scriptsize$\alpha_1$}; \draw[purple,->, solid] (x52)--(x62) node[midway,above]{\scriptsize$\alpha_1$}; \draw[purple,->, solid] (x52)--(x62) node[midway,above]{\scriptsize$\alpha_1$}; \draw[purple,->, solid] (x42)--(x52) node[midway,above]{\scriptsize$\alpha_1$}; \draw[purple,->, solid] (x32)--(x42) node[midway,above]{\scriptsize$\alpha_1$};
\draw[purple,->, solid] (x16)--(x26) node[midway,above]{\scriptsize$\alpha_1$}; \draw[purple,->, solid] (x06)--(x16) node[midway,above]{\scriptsize$\alpha_1$}; \draw[purple,->, solid] (x66) to  [out=15,in=165,looseness=0.5] (x06)  ; \draw[purple,->, solid] (x26)--(x36) node[midway,above]{\scriptsize$\alpha_1$}; \draw[purple,->, solid] (x46)--(x56) node[midway,above]{\scriptsize$\alpha_1$}; \draw[purple,->, solid] (x56)--(x66) node[midway,above]{\scriptsize$\alpha_1$}; \draw[purple,->, solid] (x46)--(x56) node[midway,above]{\scriptsize$\alpha_1$}; \draw[purple,->, solid] (x36)--(x46) node[midway,above]{\scriptsize$\alpha_1$};
\draw[purple,->, solid] (x15)--(x25) node[midway,above]{\scriptsize$\alpha_1$}; \draw[purple,->, solid] (x05)--(x15) node[midway,above]{\scriptsize$\alpha_1$}; \draw[purple,->, solid] (x65) to  [out=15,in=165,looseness=0.5] (x05)  ; \draw[purple,->, solid] (x25)--(x35) node[midway,above]{\scriptsize$\alpha_1$}; \draw[purple,->, solid] (x45)--(x55) node[midway,above]{\scriptsize$\alpha_1$}; \draw[purple,->, solid] (x55)--(x65) node[midway,above]{\scriptsize$\alpha_1$}; \draw[purple,->, solid] (x45)--(x55) node[midway,above]{\scriptsize$\alpha_1$}; \draw[purple,->, solid] (x35)--(x45) node[midway,above]{\scriptsize$\alpha_1$};
\draw[purple,->, solid] (x14)--(x24) node[midway,above]{\scriptsize$\alpha_1$}; \draw[purple,->, solid] (x04)--(x14) node[midway,above]{\scriptsize$\alpha_1$}; \draw[purple,->, solid] (x64) to  [out=15,in=165,looseness=0.5] (x04)  ; \draw[purple,->, solid] (x24)--(x34) node[midway,above]{\scriptsize$\alpha_1$}; \draw[purple,->, solid] (x44)--(x54) node[midway,above]{\scriptsize$\alpha_1$}; \draw[purple,->, solid] (x54)--(x64) node[midway,above]{\scriptsize$\alpha_1$}; \draw[purple,->, solid] (x44)--(x54) node[midway,above]{\scriptsize$\alpha_1$}; \draw[purple,->, solid] (x34)--(x44) node[midway,above]{\scriptsize$\alpha_1$};
\draw[cyan,->, dashed] (x05)--(x06) node[midway,right]{\scriptsize$\alpha_2$}; \draw[cyan,->, dashed] (x06) to  [out=255,in=105,looseness=1] (x00)  ; \draw[cyan,->, dashed] (x00)--(x01) node[midway,right]{\scriptsize$\alpha_2$}; \draw[cyan,->, dashed] (x01)--(x02) node[midway,right]{\scriptsize$\alpha_2$}; \draw[cyan,->, dashed] (x02)--(x03) node[midway,right]{\scriptsize$\alpha_2$}; \draw[cyan,->, dashed] (x03)--(x04) node[midway,right]{\scriptsize$\alpha_2$}; \draw[cyan,->, dashed] (x04)--(x05) node[midway,right]{\scriptsize$\alpha_2$};
\draw[cyan,->, dashed] (x15)--(x16) node[midway,right]{\scriptsize$\alpha_2$}; \draw[cyan,->, dashed] (x16) to  [out=255,in=105,looseness=1] (x10)  ; \draw[cyan,->, dashed] (x10)--(x11) node[midway,right]{\scriptsize$\alpha_2$}; \draw[cyan,->, dashed] (x11)--(x12) node[midway,right]{\scriptsize$\alpha_2$}; \draw[cyan,->, dashed] (x12)--(x13) node[midway,right]{\scriptsize$\alpha_2$}; \draw[cyan,->, dashed] (x13)--(x14) node[midway,right]{\scriptsize$\alpha_2$}; \draw[cyan,->, dashed] (x14)--(x15) node[midway,right]{\scriptsize$\alpha_2$};
\draw[cyan,->, dashed] (x25)--(x26) node[midway,right]{\scriptsize$\alpha_2$}; \draw[cyan,->, dashed] (x26) to  [out=255,in=105,looseness=1] (x20)  ; \draw[cyan,->, dashed] (x20)--(x21) node[midway,right]{\scriptsize$\alpha_2$}; \draw[cyan,->, dashed] (x21)--(x22) node[midway,right]{\scriptsize$\alpha_2$}; \draw[cyan,->, dashed] (x22)--(x23) node[midway,right]{\scriptsize$\alpha_2$}; \draw[cyan,->, dashed] (x23)--(x24) node[midway,right]{\scriptsize$\alpha_2$}; \draw[cyan,->, dashed] (x24)--(x25) node[midway,right]{\scriptsize$\alpha_2$};
\draw[cyan,->, dashed] (x65)--(x66) node[midway,right]{\scriptsize$\alpha_2$}; \draw[cyan,->, dashed] (x66) to  [out=255,in=105,looseness=1] (x60)  ; \draw[cyan,->, dashed] (x60)--(x61) node[midway,right]{\scriptsize$\alpha_2$}; \draw[cyan,->, dashed] (x61)--(x62) node[midway,right]{\scriptsize$\alpha_2$}; \draw[cyan,->, dashed] (x62)--(x63) node[midway,right]{\scriptsize$\alpha_2$}; \draw[cyan,->, dashed] (x63)--(x64) node[midway,right]{\scriptsize$\alpha_2$}; \draw[cyan,->, dashed] (x64)--(x65) node[midway,right]{\scriptsize$\alpha_2$};
\draw[cyan,->, dashed] (x55)--(x56) node[midway,right]{\scriptsize$\alpha_2$}; \draw[cyan,->, dashed] (x56) to  [out=255,in=105,looseness=1] (x50)  ; \draw[cyan,->, dashed] (x50)--(x51) node[midway,right]{\scriptsize$\alpha_2$}; \draw[cyan,->, dashed] (x51)--(x52) node[midway,right]{\scriptsize$\alpha_2$}; \draw[cyan,->, dashed] (x52)--(x53) node[midway,right]{\scriptsize$\alpha_2$}; \draw[cyan,->, dashed] (x53)--(x54) node[midway,right]{\scriptsize$\alpha_2$}; \draw[cyan,->, dashed] (x54)--(x55) node[midway,right]{\scriptsize$\alpha_2$};
\draw[cyan,->, dashed] (x45)--(x46) node[midway,right]{\scriptsize$\alpha_2$}; \draw[cyan,->, dashed] (x46) to  [out=255,in=105,looseness=1] (x40)  ; \draw[cyan,->, dashed] (x40)--(x41) node[midway,right]{\scriptsize$\alpha_2$}; \draw[cyan,->, dashed] (x41)--(x42) node[midway,right]{\scriptsize$\alpha_2$}; \draw[cyan,->, dashed] (x42)--(x43) node[midway,right]{\scriptsize$\alpha_2$}; \draw[cyan,->, dashed] (x43)--(x44) node[midway,right]{\scriptsize$\alpha_2$}; \draw[cyan,->, dashed] (x44)--(x45) node[midway,right]{\scriptsize$\alpha_2$};
\end{tikzpicture}
\end{adjustwidth*}
\caption{The actions of {\color{purple} $\alpha_1$ (solid)} and {\color{cyan} $\alpha_2$ (dashed)} on $C_k\times C_k$.}
\label{fig:free-groups1}
\vspace*{\floatsep}
\vspace*{\floatsep}
\vspace*{\floatsep}

\begin{adjustwidth*}{0em}{-4em}   
	 \begin{tikzpicture}[scale=1.5]  
	 \node[draw, color=black] (x00) at (0,0) {\tiny $0,0$}; 
	 \node[draw, color=black] (x01) at (0,0.8) {\tiny $0,1$};  
	 \node[draw, color=black] (x10) at (1.3,0) {\tiny $1,0$}; 
	 \node[draw, color=black] (x11) at (1.3,0.8) {\tiny $1,1$}; 
	 \node[draw, color=black] (x60) at (8.5,0) {\tiny $k-1,0$};  
	 \node[draw, color=black] (x66) at (8.5,4.8) {\tiny $k-1,k-1$}; 
	 \node[draw, color=black] (x06) at (0,4.8) {\tiny $0,k-1$}; 
	 \node[draw, color=black] (x16) at (1.3,4.8) {\tiny $1,k-1$}; 
	 \node[draw, color=black] (x61) at (8.5,0.8) {\tiny $k-1,1$};
	 \node[draw, color=black] (x62) at (8.5,1.6) {\tiny $k-1,2$}; 
	 \node[draw, color= black] (x02) at (0,1.6) {\tiny $0,2$}; 
	 \node[draw, color=black] (x12) at (1.3,1.6) {\tiny $1,2$};
	 \node[draw, color=black] (x22) at (2.6,1.6) {\tiny $2,2$}; 
	 \node[draw, color=black] (x52) at (6.8,1.6) {\tiny $k-2,2$}; 
	 \node[draw, color=black] (x51) at (6.8,0.8) {\tiny $k-2,1$};
	 \node[draw, color=black] (x50) at (6.8,0) {\tiny $k-2,0$}; 
	 \node[draw, color=black] (x56) at (6.8,4.8) {\tiny $k-2,k-1$}; 
	 \node[draw, color=black] (x55) at (6.8,4) {\tiny $k-2,k-2$}; 
	 \node[draw, color=black] (x65) at (8.5,4) {\tiny $k-1,k-2$}; 
	 \node[draw, color=black] (x05) at (0,4) {\tiny $0,k-2$}; 
	 \node[draw, color=black] (x15) at (1.3,4) {\tiny $1,k-2$}; 
	 \node[draw, color=black] (x25) at (2.6,4) {\tiny $2,k-2$}; 
	 \node[draw, color=black] (x26) at (2.6,4.8) {\tiny $2,k-1$}; 
	 \node[draw, color=black] (x20) at (2.6,0) {\tiny $2,0$}; 
	 \node[draw, color=black] (x21) at (2.6,0.8) {\tiny $2,1$}; 
	 \node[draw, color=black] (x40) at (5.2,0) {\tiny $k-3,0$}; 
	 \node[draw, color=black] (x41) at (5.2,0.8) {\tiny $k-3,1$}; 
	 \node[draw, color=black] (x42) at (5.2,1.6) {\tiny $k-3,2$};
	 \node[draw, color=black] (x44) at (5.2,3.2) {\tiny $k-3,k-3$}; 
	 \node[draw, color=black] (x45) at (5.2,4) {\tiny $k-3,k-2$}; 
	 \node[draw, color=black] (x46) at (5.2,4.8) {\tiny $k-3,k-1$}; 
	 \node[draw, color=black] (x04) at (0,3.2) {\tiny $0,k-3$}; 
	 \node[draw, color=black] (x14) at (1.3,3.2) {\tiny $1,k-3$}; 
	 \node[draw, color=black] (x24) at (2.6,3.2) {\tiny $2,k-3$}; 
	 \node[draw, color=black] (x54) at (6.8,3.2) {\tiny $k-2,k-3$}; 
	 \node[draw, color=black] (x64) at (8.5,3.2) {\tiny $k-1,k-3$}; 
	 \node[draw=none, color=black] (x30) at (3.9,0) { $\dots$}; 
	 \node[draw=none, color=black] (x31) at (3.9,0.8) { $\dots$}; 
	 \node[draw=none, color=black] (x32) at (3.9,1.6) { $\dots$}; 
	 \node[draw=none, color=black] (x35) at (3.9,4) { $\dots$}; 
	 \node[draw=none, color=black] (x36) at (3.9,4.8) { $\dots$};
	 \node[draw=none, color=black] (x34) at (3.9,3.2) { $\dots$}; 
	 \node[draw=none, color=black] (x03) at (0,2.4) { $\vdots$}; 
	 \node[draw=none, color=black] (x13) at (1.3,2.4) { $\vdots$}; 
	 \node[draw=none, color=black] (x23) at (2.6,2.4) { $\vdots$}; 
	 \node[draw=none, color=black] (x43) at (5.2,2.4) { $\vdots$}; 
	 \node[draw=none, color=black] (x53) at (6.8,2.4) { $\vdots$}; 
	 \node[draw=none, color=black] (x63) at (8.5,2.4) { $\vdots$};
	 
\draw[brown,->, solid] (x11)--(x20) node[midway,above]{\scriptsize$\alpha_1^{-k+1}$}; 
\draw[brown,->, solid] (x01)--(x11) node[midway,above]{\scriptsize$\alpha_1^{-k+1}$}; 
\draw[brown,->, solid] (x61) to [out=-15,in=300,looseness=0.4]  (x00) ;  
\draw[brown,->, solid] (x21)--(x30);
\draw[brown,->, solid] (x51)--(x60) node[midway,above]{\scriptsize$\alpha_1^{-k+1}$}; 
\draw[brown,->, solid] (x41)--(x50) node[midway,above]{\scriptsize$\alpha_1^{-k+1}$}; 
\draw[brown,->, solid] (x31)--(x40) node[midway,above]{\scriptsize$\alpha_1^{-k+1}$};  ;
\draw[brown,->, solid] (x10)--(x21) node[midway,above]{\scriptsize$\alpha_1^{-k+1}$};
 \draw[brown,->, solid] (x00)--(x10) node[midway,above]{\scriptsize$\alpha_1^{-k+1}$};
  \draw[brown,->, solid] (x60) to  [out=15,in=60,looseness=0.4] (x01) ; 
  \draw[brown,->, solid] (x20)--(x31) node[midway,above]{\scriptsize$\alpha_1^{-k+1}$}; 
  \draw[brown,->, solid] (x50)--(x61) node[midway,above]{\scriptsize$\alpha_1^{-k+1}$}; 
  \draw[brown,->, solid] (x40)--(x51) node[midway,above]{\scriptsize$\alpha_1^{-k+1}$}; 
  \draw[brown,->, solid] (x30)--(x41) ;
\draw[brown,->, solid] (x12)--(x22) node[midway,above]{\scriptsize$\alpha_1^{-k+1}$};
 \draw[brown,->, solid] (x02)--(x12) node[midway,above]{\scriptsize$\alpha_1^{-k+1}$}; \draw[brown,->, solid] (x62) to  [out=15,in=165,looseness=0.5] (x02)  ; \draw[brown,->, solid] (x22)--(x32) node[midway,above]{\scriptsize$\alpha_1^{-k+1}$}; \draw[brown,->, solid] (x52)--(x62) node[midway,above]{\scriptsize$\alpha_1^{-k+1}$}; \draw[brown,->, solid] (x52)--(x62) node[midway,above]{\scriptsize$\alpha_1^{-k+1}$}; \draw[brown,->, solid] (x42)--(x52) node[midway,above]{\scriptsize$\alpha_1^{-k+1}$}; \draw[brown,->, solid] (x32)--(x42) node[midway,above]{\scriptsize$\alpha_1^{-k+1}$};
\draw[brown,->, solid] (x16)--(x26) node[midway,above]{\scriptsize$\alpha_1^{-k+1}$}; \draw[brown,->, solid] (x06)--(x16) node[midway,above]{\scriptsize$\alpha_1^{-k+1}$}; \draw[brown,->, solid] (x66) to  [out=15,in=165,looseness=0.5] (x06)  ; \draw[brown,->, solid] (x26)--(x36) node[midway,above]{\scriptsize$\alpha_1^{-k+1}$}; \draw[brown,->, solid] (x46)--(x56) node[midway,above]{\scriptsize$\alpha_1^{-k+1}$}; \draw[brown,->, solid] (x56)--(x66) node[midway,above]{\scriptsize$\alpha_1^{-k+1}$}; \draw[brown,->, solid] (x46)--(x56) node[midway,above]{\scriptsize$\alpha_1^{-k+1}$}; \draw[brown,->, solid] (x36)--(x46) node[midway,above]{\scriptsize$\alpha_1^{-k+1}$};
\draw[brown,->, solid] (x15)--(x25) node[midway,above]{\scriptsize$\alpha_1^{-k+1}$}; \draw[brown,->, solid] (x05)--(x15) node[midway,above]{\scriptsize$\alpha_1^{-k+1}$}; \draw[brown,->, solid] (x65) to  [out=15,in=165,looseness=0.5] (x05)  ; \draw[brown,->, solid] (x25)--(x35) node[midway,above]{\scriptsize$\alpha_1^{-k+1}$}; \draw[brown,->, solid] (x45)--(x55) node[midway,above]{\scriptsize$\alpha_1^{-k+1}$}; \draw[brown,->, solid] (x55)--(x65) node[midway,above]{\scriptsize$\alpha_1^{-k+1}$}; \draw[brown,->, solid] (x45)--(x55) node[midway,above]{\scriptsize$\alpha_1^{-k+1}$}; \draw[brown,->, solid] (x35)--(x45) node[midway,above]{\scriptsize$\alpha_1^{-k+1}$};
\draw[brown,->, solid] (x14)--(x24) node[midway,above]{\scriptsize$\alpha_1^{-k+1}$}; \draw[brown,->, solid] (x04)--(x14) node[midway,above]{\scriptsize$\alpha_1^{-k+1}$}; \draw[brown,->, solid] (x64) to  [out=15,in=165,looseness=0.5] (x04)  ; \draw[brown,->, solid] (x24)--(x34) node[midway,above]{\scriptsize$\alpha_1^{-k+1}$}; \draw[brown,->, solid] (x44)--(x54) node[midway,above]{\scriptsize$\alpha_1^{-k+1}$}; \draw[brown,->, solid] (x54)--(x64) node[midway,above]{\scriptsize$\alpha_1^{-k+1}$}; \draw[brown,->, solid] (x44)--(x54) node[midway,above]{\scriptsize$\alpha_1^{-k+1}$}; \draw[brown,->, solid] (x34)--(x44) node[midway,above]{\scriptsize$\alpha_1^{-k+1}$};
\draw[cyan,->, dashed] (x05)--(x06) node[midway,right]{\scriptsize$\alpha_2$}; \draw[cyan,->, dashed] (x06) to  [out=255,in=105,looseness=1] (x00)  ; \draw[cyan,->, dashed] (x00)--(x01) node[midway,right]{\scriptsize$\alpha_2$}; \draw[cyan,->, dashed] (x01)--(x02) node[midway,right]{\scriptsize$\alpha_2$}; \draw[cyan,->, dashed] (x02)--(x03) node[midway,right]{\scriptsize$\alpha_2$}; \draw[cyan,->, dashed] (x03)--(x04) node[midway,right]{\scriptsize$\alpha_2$}; \draw[cyan,->, dashed] (x04)--(x05) node[midway,right]{\scriptsize$\alpha_2$};
\draw[cyan,->, dashed] (x15)--(x16) node[midway,right]{\scriptsize$\alpha_2$}; \draw[cyan,->, dashed] (x16) to  [out=255,in=105,looseness=1] (x10)  ; \draw[cyan,->, dashed] (x10)--(x11) node[midway,right]{\scriptsize$\alpha_2$}; \draw[cyan,->, dashed] (x11)--(x12) node[midway,right]{\scriptsize$\alpha_2$}; \draw[cyan,->, dashed] (x12)--(x13) node[midway,right]{\scriptsize$\alpha_2$}; \draw[cyan,->, dashed] (x13)--(x14) node[midway,right]{\scriptsize$\alpha_2$}; \draw[cyan,->, dashed] (x14)--(x15) node[midway,right]{\scriptsize$\alpha_2$};
\draw[cyan,->, dashed] (x25)--(x26) node[midway,right]{\scriptsize$\alpha_2$}; \draw[cyan,->, dashed] (x26) to  [out=255,in=105,looseness=1] (x20)  ; \draw[cyan,->, dashed] (x20)--(x21) node[midway,right]{\scriptsize$\alpha_2$}; \draw[cyan,->, dashed] (x21)--(x22) node[midway,right]{\scriptsize$\alpha_2$}; \draw[cyan,->, dashed] (x22)--(x23) node[midway,right]{\scriptsize$\alpha_2$}; \draw[cyan,->, dashed] (x23)--(x24) node[midway,right]{\scriptsize$\alpha_2$}; \draw[cyan,->, dashed] (x24)--(x25) node[midway,right]{\scriptsize$\alpha_2$};
\draw[cyan,->, dashed] (x65)--(x66) node[midway,right]{\scriptsize$\alpha_2$}; \draw[cyan,->, dashed] (x66) to  [out=255,in=105,looseness=1] (x60)  ; \draw[cyan,->, dashed] (x60)--(x61) node[midway,right]{\scriptsize$\alpha_2$}; \draw[cyan,->, dashed] (x61)--(x62) node[midway,right]{\scriptsize$\alpha_2$}; \draw[cyan,->, dashed] (x62)--(x63) node[midway,right]{\scriptsize$\alpha_2$}; \draw[cyan,->, dashed] (x63)--(x64) node[midway,right]{\scriptsize$\alpha_2$}; \draw[cyan,->, dashed] (x64)--(x65) node[midway,right]{\scriptsize$\alpha_2$};
\draw[cyan,->, dashed] (x55)--(x56) node[midway,right]{\scriptsize$\alpha_2$}; \draw[cyan,->, dashed] (x56) to  [out=255,in=105,looseness=1] (x50)  ; \draw[cyan,->, dashed] (x50)--(x51) node[midway,right]{\scriptsize$\alpha_2$}; \draw[cyan,->, dashed] (x51)--(x52) node[midway,right]{\scriptsize$\alpha_2$}; \draw[cyan,->, dashed] (x52)--(x53) node[midway,right]{\scriptsize$\alpha_2$}; \draw[cyan,->, dashed] (x53)--(x54) node[midway,right]{\scriptsize$\alpha_2$}; \draw[cyan,->, dashed] (x54)--(x55) node[midway,right]{\scriptsize$\alpha_2$};
\draw[cyan,->, dashed] (x45)--(x46) node[midway,right]{\scriptsize$\alpha_2$}; \draw[cyan,->, dashed] (x46) to  [out=255,in=105,looseness=1] (x40)  ; \draw[cyan,->, dashed] (x40)--(x41) node[midway,right]{\scriptsize$\alpha_2$}; \draw[cyan,->, dashed] (x41)--(x42) node[midway,right]{\scriptsize$\alpha_2$}; \draw[cyan,->, dashed] (x42)--(x43) node[midway,right]{\scriptsize$\alpha_2$}; \draw[cyan,->, dashed] (x43)--(x44) node[midway,right]{\scriptsize$\alpha_2$}; \draw[cyan,->, dashed] (x44)--(x45) node[midway,right]{\scriptsize$\alpha_2$};
\end{tikzpicture}
\end{adjustwidth*}
\vspace*{-15mm}
\caption{The actions of {\color{brown} $\alpha_1^{-k+1}$ (solid)} and {\color{cyan} $\alpha_2$ (dashed)} on $C_k\times C_k$.}
\label{fig:free-groups2}
\end{center}
\end{figure}

Define $g_{k}\colon F_{2}\to\Sym\left(C_{k}\times C_{k}\right)$ as
follows. Let $w\in F_{2}$ be a reduced word. Write $w=x_{1}^{d_{1}}x_{2}^{e_{1}}\cdots x_{1}^{d_{r}}x_{2}^{e_{r}}\in F_{2}$,
$r\ge0$, $d_{i},e_{i}\in\ZZ$, where $d_{i}\neq0$ for $i>1$ and
$e_{i}\neq0$ for $i<r$. Define $g_{k}\left(w\right)=\alpha_{1}^{\overline{d_{1}}}\alpha_{2}^{\overline{e_{1}}}\cdots\alpha_{1}^{\overline{d_{r}}}\alpha_{2}^{\overline{e_{r}}}$.
For example, if $k=5$ then $g_{k}\left(x_{1}^{13}x_{2}^{-9}x_{1}^{3}x_{2}x_{1}^{-77}\right)=\alpha_{1}^{3}\alpha_{2}^{-4}\alpha_{1}^{3}\alpha_{2}\alpha_{1}^{-2}$.

The following lemma shows that $g_{k}$ has small local defect, but
grossly violates an identity that holds in $\Sym\left(N\right)$ for
every $N\geq k^{2}$. The lemma readily implies Theorem \ref{thm: Free_gps_are_not_FUSP-1}
(see below). We shall write $\ell\left(w\right)$ for the length of
a reduced word $w$.
\begin{lem}
\label{lem:lemma-free-group}Let $k\geq1$. Then $\defect_{\infty}\left(g_{k}\right)\leq\frac{2}{k}$,
but $d^{H}\left(g_{k}\left(\left(x_{1}^{N!-k+1}x_{2}\right)^{k}\left(x_{1}^{-k+1}x_{2}\right)^{-k}\right),\id\right)\geq1-\frac{5}{k}$
for every $N\geq k^{2}$.
\end{lem}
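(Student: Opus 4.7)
The plan has two parts. For the defect bound $\defect_\infty(g_k)\le 2/k$, the key algebraic fact is that $\overline{t}\equiv t\pmod k$ for every $t\in\ZZ$, so for any integers $a,b$ one has $\alpha_1^{\overline{a}+\overline{b}}=\alpha_1^{\overline{a+b}}\cdot R^{\ell}$ for some integer $\ell$, where $R:=\alpha_1^k$ is the involution swapping rows $0$ and $1$ (differing from the identity on exactly $2k$ of the $k^2$ points). Given any $w_1,w_2\in F_2$, I would perform the junction cancellations in $w_1w_2$ and observe that full $\alpha_2$-cancellations are exact because $\alpha_2^k=\id$, and full $\alpha_1$-cancellations are exact because $\overline{-t}=-\overline{t}$. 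Hence at most one partial $\alpha_1$-merger (at the innermost level of the cascade) contributes a single $R^{\ell}$ factor to $g_k(w_1)g_k(w_2)$ relative to $g_k(w_1w_2)$; since $R^2=\id$, this factor differs from the identity on at most $2k$ points, giving the bound.

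For the distance-from-identity claim, I would first reduce the target word. Writing $A=x_1^{N!-k+1}x_2$ and $B=x_2^{-1}x_1^{k-1}$, the middle cancellation $x_2\cdot x_2^{-1}$ together with $x_1^{N!-k+1}\cdot x_1^{k-1}=x_1^{N!}$ yields the reduced form $A^kB^k=A^{k-1}\cdot x_1^{N!}\cdot B^{k-1}$. Since $N\ge k^2\ge 2k$, both $k$ and $2k$ divide $N!$, so $\overline{N!-k+1}=1$, $\overline{N!}=0$, and $\alpha_1^{N!}=\id$. Applying $g_k$ to the reduced word and simplifying gives
\[
\Pi:=g_k\bigl((x_1^{N!-k+1}x_2)^k(x_1^{-k+1}x_2)^{-k}\bigr)=(\alpha_1\alpha_2)^{k-1}(\alpha_2^{-1}\alpha_1^{k-1})^{k-1}.
\]
Setting $P=\alpha_1\alpha_2$ and using $\alpha_1^{k-1}=\alpha_1^{-1}R$, I would write $Q:=\alpha_2^{-1}\alpha_1^{k-1}=P^{-1}R$, and verify by telescoping induction that $(P^{-1}R)^{k-1}=P^{-(k-1)}R_{k-2}R_{k-3}\cdots R_0$ with $R_s:=P^sRP^{-s}$. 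Thus $\Pi=P^{k-1}Q^{k-1}=R_{k-2}R_{k-3}\cdots R_0$.

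Now compare with the idealized picture in which each $P^s$ in the conjugation $R_s=P^sRP^{-s}$ is replaced by the untwisted diagonal shift $\tilde P^s$ (so the twist $\tau$ still gives $R\ne\id$ but is not "propagated"): then $R_s$ becomes the transposition $\hat R_s$ swapping rows $\overline{s}$ and $\overline{s+1}$ within every column, and a direct computation shows $\hat R_{k-2}\hat R_{k-3}\cdots\hat R_0=\alpha_2^{-1}$, which has no fixed points. Moreover $\Pi$ preserves each column of $C_k\times C_k$: every factor of $\alpha_1$ shifts the first coordinate by $+1$ regardless of whether the twist $\tau$ fires (the twist only permutes second coordinates), and the total $\alpha_1$-exponent in $\Pi$ is $(k-1)+(k-1)^2=k(k-1)\equiv 0\pmod k$. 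So $\Pi$ acts on each column as a permutation $\pi_i$ of $C_k$.

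Because the idealization $\alpha_2^{-1}$ is fixed-point-free, every fixed point of $\Pi$ must lie in the "deviation set" $\{x:\Pi(x)\ne\alpha_2^{-1}(x)\}$. The main step (and the main technical obstacle) is to show this deviation set has size at most $5k$. The deviation of $R_s$ from $\hat R_s$ is localized near $\tilde P^s$-images of the twist zone $\{(0,0),(0,1)\}$, and one verifies that the support of this deviation sits in at most two columns and consists of at most a bounded number of points per $s$. Tracking how a trajectory under $R_0,R_1,\dots,R_{k-2}$ can encounter such deviation points, and using that $R_0=R$ agrees exactly with $\hat R_0$, I would show that the cumulative deviation set has size $\le 5k$. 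This yields $d^H(\Pi,\id)\ge 1-5k/k^2=1-5/k$, completing the proof.
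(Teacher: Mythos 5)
Your Part~1 argument is essentially the paper's, phrased informally: the paper formalizes the ``cascade'' via an induction on $\ell(w_1)+\ell(w_2)$, showing $\beta_{w_1,w_2}=g(w_1)^{-1}g(w_1w_2)g(w_2)^{-1}$ is either the identity, an $\alpha_1^{\pm k}$ (which moves only $2k$ points), or conjugate to a $\beta_{\tilde w_1,\tilde w_2}$ covered by induction. This is sound on both sides.

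For Part~2, however, your proposal takes a genuinely different route and leaves its crucial step unproved. You reduce the target word to $A^{k-1}x_1^{N!}B^{k-1}$, compute $\Pi=(\alpha_1\alpha_2)^{k-1}(\alpha_2^{-1}\alpha_1^{k-1})^{k-1}=R_{k-2}\cdots R_0$ with $R_s=P^sRP^{-s}$, and compare with the untwisted product $\hat R_{k-2}\cdots\hat R_0=\alpha_2^{-1}$, which is fixed-point free. All of that is correct. But the entire burden of the lemma then falls on the assertion that $\left|\{x:\Pi(x)\ne\alpha_2^{-1}(x)\}\right|\le 5k$, and this you merely announce (``the main technical obstacle'', ``I would show''). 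Your sketch -- that each $R_s$ deviates from $\hat R_s$ on $O(1)$ points per $s$ localized in a couple of columns, and that trajectories under $R_0,\dots,R_{k-2}$ accumulate $\le 5k$ deviation points -- is far from a proof: because the $R_s$ are conjugates of $R$ by powers of the \emph{twisted} $P$, the perturbation $P^s\tilde P^{-s}$ grows with $s$ (it is a product of $s$ disjoint transpositions), and showing the cumulative deviation of the full product stays bounded by a small multiple of $k$ requires genuine bookkeeping that you have not done, and whose constant you have not pinned down.

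The paper avoids this difficulty entirely. Rather than reducing the word, it applies the already-established defect bound once: $g\bigl((x_1^{N!-k+1}x_2)^k(x_1^{-k+1}x_2)^{-k}\bigr)$ is within $2/k$ of $g\bigl((x_1^{N!-k+1}x_2)^k\bigr)\,g\bigl((x_1^{-k+1}x_2)^k\bigr)^{-1}=(\alpha_1\alpha_2)^k\bigl((\alpha_1^{-k+1}\alpha_2)^k\bigr)^{-1}$. Each factor is then analyzed separately against the identity: $(\alpha_1\alpha_2)^k$ fixes everything off the two near-diagonals $D_0\cup D_1$ (distance $\le 2/k$ to $\id$), while $(\alpha_1^{-k+1}\alpha_2)^k$ fixes nothing off $D_0$ (distance $\ge 1-1/k$ to $\id$); the triangle inequality and bi-invariance of $d^H$ then give $1-5/k$. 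These two ``single diagonal walk'' permutations are much easier to understand than your product of $k-1$ twisted conjugates, and that is exactly why the paper's route works cleanly. To repair your proposal you would either need to actually carry out the deviation count for $R_{k-2}\cdots R_0$ with an explicit constant, or switch to the paper's strategy of leveraging Part~1's defect bound to break the word into two halves before analyzing permutations.
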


\begin{proof}
Write $g=g_{k}$. Let $w_{1},w_{2}\in F_{2}$ be reduced words and
write $\beta_{w_{1},w_{2}}=g\left(w_{1}\right)^{-1}g\left(w_{1}w_{2}\right)g\left(w_{2}\right)^{-1}$.
We prove that $d^{H}\left(\beta_{w_{1},w_{2}},\id\right)\leq\frac{2}{k}$
by induction on $\ell\left(w_{1}\right)+\ell\left(w_{2}\right)$.
If either $w_{1}=\id$ or $w_{2}=\id$ then $\beta_{w_{1},w_{2}}=\id$
and we are done. Assume that $w_{1}\neq\id$ and $w_{2}\neq\id$.
If the last letter of $w_{1}$ and the first letter of $w_{2}$ are
neither the same nor mutual inverses, then $g\left(w_{1}w_{2}\right)=g\left(w_{1}\right)g\left(w_{2}\right)$,
and thus $\beta_{w_{1},w_{2}}=\id$ and we are done. Otherwise, we
have cancellation-free concatenations $w_{1}=\tilde{w}_{1}\cdot x^{e_{1}}$
and $w_{2}=x^{e_{2}}\cdot\tilde{w}_{2}$, where $\tilde{w}_{1}$ and
$\tilde{w}_{2}$ are reduced words, $x\in\left\{ x_{1},x_{2}\right\} $
and $e_{1},e_{2}\in\ZZ\setminus\left\{ 0\right\} $. Write $\alpha=g\left(x\right)\in\left\{ \alpha_{1},\alpha_{2}\right\} $.
If $e_{2}\ne-e_{1}$ then $g\left(w_{1}w_{2}\right)=g\left(\tilde{w}_{1}\right)\alpha^{\overline{e_{1}+e_{2}}}g\left(\tilde{w}_{2}\right)$,
and thus $\beta_{w_{1},w_{2}}=\alpha^{t}$ for $t=\overline{e_{1}+e_{2}}-\left(\overline{e_{1}}+\overline{e_{2}}\right)$.
Then $\left|t\right|\in\left\{ 0,k\right\} $. Since $\alpha_{2}^{k}$
is $\id$ and $\alpha_{1}^{k}$ fixes all elements outside $C_{k}\times\left\{ 0,1\right\} $,
we see that $d^{H}\left(\beta_{w_{1},w_{2}},\id\right)\le\frac{2}{k}$
as required. Finally, if $e_{2}=-e_{1}$, then
\[
\beta_{w_{1},w_{2}}=\left(g\left(\tilde{w}_{1}\right)\alpha^{\overline{e_{1}}}\right)^{-1}g\left(\tilde{w}_{1}\alpha^{\overline{e_{1}}}\cdot\alpha^{\overline{-e_{1}}}\tilde{w}_{2}\right)\left(\alpha^{\overline{-e_{1}}}g\left(\tilde{w}_{2}\right)\right)^{-1}=\alpha^{-\overline{e_{1}}}\beta_{\tilde{w}_{1},\tilde{w}_{2}}\alpha^{\overline{e_{1}}}\,\,\text{,}
\]
and thus $d^{H}\left(\beta_{w_{1},w_{2}},\id\right)=d^{H}\left(\beta_{\tilde{w}_{1},\tilde{w}_{2}},\id\right)\leq\frac{2}{k}$
by the induction hypothesis. The upshot is that $\defect_{\infty}\left(g\right)\leq\frac{2}{k}$
as claimed.

Let $N\geq k^{2}$. Then $\overline{N!-k+1}=1$ and $\overline{-k+1}=-k+1$,
and thus $g\left(\left(x_{1}^{N!-k+1}x_{2}\right)^{k}\right)=\left(\alpha_{1}\alpha_{2}\right)^{k}$
and $g\left(\left(x_{1}^{-k+1}x_{2}\right)^{k}\right)=\left(\alpha_{1}^{-k+1}\alpha_{2}\right)^{k}$.
For $i\in\NN$, write
\[
D_{i}=\left\{ \left(x,y\right)\in C_{k}\times C_{k}\mid x-y\equiv i\pmod k\right\}\,\,\text{.}
\]
By examining Figure \ref{fig:free-groups1}, we see that $\left(\alpha_{1}\alpha_{2}\right)^{k}$
fixes every element of $C_{k}\times C_{k}$ outside the diagonals
$D_{0}$ and $D_{1}$, and thus $d^{H}\left(\left(\alpha_{1}\alpha_{2}\right)^{k},\id\right)\le\frac{2}{k}$.
By examining Figure \ref{fig:free-groups2}, we see that $\left(\alpha_{1}^{-k+1}\alpha_{2}\right)^{k}$
does not fix any element outside the diagonal $D_{0}$, and hence
$d^{H}\left(\left(\alpha_{1}^{-k+1}\alpha_{2}\right)^{k},\id\right)\geq1-\frac{1}{k}$.
Therefore, by the triangle inequality,
\[
d^{H}\left(g\left(\left(x_{1}^{N!-k+1}x_{2}\right)^{k}\right),g\left(\left(x_{1}^{-k+1}x_{2}\right)^{k}\right)\right)\geq1-\frac{3}{k}\ .
\]
Thus, since $d^H$ is bi-invariant and $\defect_{\infty}\left(g\right)\leq\frac{2}{k}$, we have
\[
d^{H}\left(g\left(\left(x_{1}^{N!-k+1}x_{2}\right)^{k}\left(x_{1}^{-k+1}x_{2}\right)^{-k}\right),\id\right)\geq1-\frac{5}{k}\,\,\text{.}
\]
\end{proof}
For $k\geq1$, fix an arbitrary bijection between $\left[k^{2}\right]$
and $C_{k}\times C_{k}$. Henceforth, we use this bijection to view
$g_{k}$ as a function from $\Gamma$ to $\Sym\left(k^{2}\right)$.
\begin{proof}
[Proof of Theorem \ref{thm: Free_gps_are_not_FUSP-1}]We are given
a group $\Gamma$ and a surjective homomorphism $\pi\colon\Gamma\rightarrow F_{2}$.
Define $f_{k}=g_{k}\circ\pi$. Then $\defect_{\infty}\left(f_{k}\right)=\defect_{\infty}\left(g_{k}\right)\leq\frac{2}{k}$.
Take $\gamma_{1},\gamma_{2}\in\Gamma$ such that $\pi\left(\gamma_{1}\right)=x_{1}$
and $\pi\left(\gamma_{2}\right)=x_{2}$.
Let $N\geq k^{2}$, take a homomorphism $h_{k}\colon\Gamma\rightarrow\Sym\left(N\right)$,
and write $\gamma_{0}=\left(\gamma_{1}^{N!-k+1}\gamma_{2}\right)^{k}\left(\gamma_{1}^{-k+1}\gamma_{2}\right)^{-k}\in\Gamma$.
Then $h_{k}\left(\gamma_{0}\right)=\id_{N}\in\Sym\left(N\right)$,
while Lemma \ref{lem:lemma-free-group} says that $d^{H}\left(f_{k}\left(\gamma_{0}\right),\id_{k^{2}}\right)\geq1-\frac{5}{k}$.
Hence, $d_{\infty}\left(f_{k},h_{k}\right)\geq d^{H}\left(f_{k}\left(\gamma_{0}\right),h_{k}\left(\gamma_{0}\right)\right)\geq1-\frac{5}{k}$.
\end{proof}

\appendix

\section{\label{sec:triangle-ineq}The triangle inequality for $d^{H}$}

It is clear that $d^{H}$ is symmetric and that $d^{H}\left(\sigma_{1},\sigma_{2}\right)=0$
if and only if $\sigma_{1}=\sigma_{2}$. In fact, $d^{H}$ is a metric
by the following lemma.
\begin{lem}
\label{lem:triangle-inequality}For $i\in\left[3\right]$, let $n_{i}\geq1$
and $\sigma_{i}\in\Sym\left(n_{i}\right)$. Then $d^{H}\left(\sigma_{1},\sigma_{2}\right)+d^{H}\left(\sigma_{2},\sigma_{3}\right)\geq d^{H}\left(\sigma_{1},\sigma_{3}\right)$.
\end{lem}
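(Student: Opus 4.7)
The plan is to reduce everything to the pointwise Hamming triangle inequality on a common enlargement, supplemented by a direct algebraic argument in the single case where this reduction is insufficient. By symmetry of the claim in $\sigma_1$ and $\sigma_3$, I assume without loss of generality that $n_1 \leq n_3$. Set $N = \max(n_1, n_2, n_3)$, pick a new symbol $\perp \notin [N]$, and extend each $\sigma_i$ to $\hat\sigma_i \colon [N] \to [N] \cup \{\perp\}$ by $\hat\sigma_i(x) = \sigma_i(x)$ for $x \leq n_i$ and $\hat\sigma_i(x) = \perp$ otherwise. Positions $x > \max(n_i, n_j)$ have $\hat\sigma_i(x) = \hat\sigma_j(x) = \perp$, so unpacking the definition of $d^H$ gives
$$\bigl|\{x \in [N] : \hat\sigma_i(x) \neq \hat\sigma_j(x)\}\bigr| \;=\; \max(n_i, n_j) \cdot d^H(\sigma_i, \sigma_j).$$
Applying the trivial pointwise triangle inequality on $[N] \cup \{\perp\}$ and summing over $x$ then delivers the master estimate
$$n_3 \cdot d^H(\sigma_1, \sigma_3) \;\leq\; \max(n_1, n_2) \cdot d^H(\sigma_1, \sigma_2) \;+\; \max(n_2, n_3) \cdot d^H(\sigma_2, \sigma_3).$$
If $n_2 \leq n_3$, both right-hand coefficients are bounded by $n_3$, and dividing through by $n_3$ immediately delivers the conclusion.

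The main obstacle is the remaining case $n_2 > n_3$, where the master estimate only yields $d^H(\sigma_1, \sigma_3) \leq (n_2/n_3)\bigl(d^H(\sigma_1, \sigma_2) + d^H(\sigma_2, \sigma_3)\bigr)$ with ratio $n_2/n_3 > 1$; the common-placeholder extension is wasteful precisely when $\sigma_2$ has the largest domain, since it inflates both pairwise distances involving it. In this case I will argue directly: setting $A_{ij} = \bigl|\{x \in [\min(n_i, n_j)] : \sigma_i(x) \neq \sigma_j(x)\}\bigr|$ so that $\max(n_i,n_j) \cdot d^H(\sigma_i,\sigma_j) = A_{ij} + \max(n_i,n_j) - \min(n_i,n_j)$, and clearing denominators in the claimed inequality, the problem reduces to showing
$$n_2 A_{13} - n_3(A_{12} + A_{23}) \;\leq\; (n_2 - n_3)(n_1 + n_3).$$
The pointwise triangle inequality on $[n_1]$ gives $A_{13} \leq A_{12} + A_{23}$, so the left-hand side is at most $(n_2 - n_3)(A_{12} + A_{23})$, and the trivial bounds $A_{12} \leq n_1$ and $A_{23} \leq n_3$ close the argument. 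These last bounds, which encode the fact that two permutations can disagree only on the intersection of their domains, are the ingredient that a purely pointwise argument on $[N]$ would miss, and a short computation shows that the inequality is saturated precisely when $A_{12} = n_1$ and $A_{23} = n_3$.
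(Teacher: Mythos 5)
Your proof is correct and follows essentially the same approach as the paper: both extend each $\sigma_i$ to a common index set $[N]$ via a placeholder symbol ($\perp$ for you, $\star$ in the paper) so that $d^H$ becomes a rescaled pointwise Hamming count, apply the pointwise triangle inequality, observe that this closes the case where $n_2$ is not strictly the largest, and then handle the remaining case by a direct computation that exploits the trivial bound that disagreement counts between two permutations cannot exceed the size of the smaller domain. Your endgame clears denominators and uses $A_{13}\le A_{12}+A_{23}$ on $[n_1]$ together with $A_{12}\le n_1$, $A_{23}\le n_3$, whereas the paper keeps things in terms of the count $d_{n_{13}}$ and uses $d_{n_{13}}(\sigma_1,\sigma_3)\le n_{13}$ directly, but these are cosmetic variations of the same idea.
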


\begin{proof}
For $i,j\in\left[3\right]$ write $n_{ij}=\max\left\{ n_{i},n_{j}\right\} $
and $n=\max\left\{ n_{1},n_{2},n_{3}\right\} $. For a permutation
$\sigma\in\Sym\left(N\right)$ and $x>N$ write $\sigma\left(x\right)=\star$,
where $\star$ is a dummy object that is not a natural number. For
$k\geq1$ and permutations $\tau_{1}$ and $\tau_{2}$ (of possibly
different sizes), write $d_{k}\left(\tau_{1},\tau_{2}\right)=\left|\left\{ x\in\left[k\right]\mid\tau_{1}\left(x\right)\neq\tau_{2}\left(x\right)\right\} \right|$.
Then, for $k\geq n_{ij}$ we have $d^{H}\left(\sigma_{i},\sigma_{j}\right)=\frac{1}{n_{ij}}d_{k}\left(\sigma_{i},\sigma_{j}\right)$.
Clearly, $d_{k}$ satisfies the triangle inequality for each fixed
$k$.

Using these notations, we have
\[
d^{H}\left(\sigma_{1},\sigma_{2}\right)+d^{H}\left(\sigma_{2},\sigma_{3}\right)\geq\frac{1}{n}\left(d_{n}\left(\sigma_{1},\sigma_{2}\right)+d_{n}\left(\sigma_{2},\sigma_{3}\right)\right)\geq\frac{1}{n}d_{n_{13}}\left(\sigma_{1},\sigma_{3}\right)\,\,\text{.}
\]
If $n_{2}\leq n_{13}$ then $n=n_{13}$, and thus we are done by the
above. On the other hand, if $n_{2}>n_{13}$ then
\begin{align*}
d^{H}\left(\sigma_{1},\sigma_{2}\right)+d^{H}\left(\sigma_{2},\sigma_{3}\right) & =\frac{1}{n_{2}}\left(d_{n_{2}}\left(\sigma_{1},\sigma_{2}\right)+d_{n_{2}}\left(\sigma_{2},\sigma_{3}\right)\right)\\
 & =\frac{1}{n_{2}}\left(d_{n_{13}}\left(\sigma_{1},\sigma_{2}\right)+d_{n_{13}}\left(\sigma_{2},\sigma_{3}\right)+2\left(n_{2}-n_{13}\right)\right)\\
 & \geq\frac{n_{13}}{n_{2}n_{13}}\left(d_{n_{13}}\left(\sigma_{1},\sigma_{3}\right)+\left(n_{2}-n_{13}\right)\right)\\
 & \geq\frac{1}{n_{2}n_{13}}\left(n_{13}d_{n_{13}}\left(\sigma_{1},\sigma_{3}\right)+\left(n_{2}-n_{13}\right)d_{n_{13}}\left(\sigma_{1},\sigma_{3}\right)\right)\\
 & =\frac{1}{n_{13}}d_{n_{13}}\left(\sigma_{1},\sigma_{3}\right)\\
 & =d^{H}\left(\sigma_{1},\sigma_{3}\right)\,\,\text{.}
\end{align*}
\end{proof}

\section{\label{sec:symmetrization}Symmetrization}

Let $G$ be a group equipped with a bi-invariant metric $d\colon G\times G\rightarrow\RR_{\geq0}$.
Bi-invariance means that $d\left(agb,ahb\right)=d\left(g,h\right)$
for all $g,h,a,b\in G$. For example, one can take $G$ to be $\Sym\left(n\right)$
and $d$ to be the (normalized) Hamming metric $d^{H}$ on $\Sym\left(n\right)$.
This section deals with deforming a function $f\colon\Gamma\rightarrow G$
into a symmetric function $f'\colon\Gamma\rightarrow G$ (see Definition
\ref{def:symmetric-function}). This is achieved by Proposition \ref{prop:symmetrization-in-general}
and is used in Section \ref{sec:amenable-stable} via Proposition
\ref{prop:U-symmetrization}.

Throughout the section, the notation $\trieq$ indicates the use of
the triangle inequality. We use the bi-invariance of the metric $d\left(\cdot,\cdot\right)$
freely, without further explanation.

Let $\Gamma$ be a group and take functions $f,h:\Gamma\rightarrow G$.
The \emph{uniform local defect} of $f$ is
\[
\defect_{\infty}\left(f\right)=\sup\left\{ d\left(f\left(\gamma_{1}\gamma_{2}\right),f\left(\gamma_{1}\right)f\left(\gamma_{2}\right)\right)\mid\gamma_{1},\gamma_{2}\in\Gamma\right\} 
\]
and the \emph{uniform distance} between $f$ and $h$ is
\[
d_{\infty}\left(f,h\right)=\sup\left\{ d\left(f\left(\gamma\right),h\left(\gamma\right)\right)\mid\gamma\in\Gamma\right\} \,\,\text{.}
\]
If $\Gamma$ is a discrete amenable group equipped with a left- or
right- invariant measure $m$, then the \emph{mean local defect} of
$f$ is
\[
\defect_{1}\left(f\right)=\intint d\left(f\left(\gamma_{1}\gamma_{2}\right),f\left(\gamma_{1}\right)f\left(\gamma_{2}\right)\right)\dm\left(\gamma_{1}\right)\dm\left(\gamma_{2}\right)
\]
and the \emph{mean distance} between $f$ and $h$ is
\[
d_{1}\left(f,h\right)=\int d\left(f\left(\gamma\right),h\left(\gamma\right)\right)\dm\left(\gamma\right)\,\,\text{.}
\]

We begin with the following lemma, that says that if two functions
are close together then their local defects are nearly the same.
\begin{lem}
\label{lem:proximity-implies-similar-local-defect}Let $\Gamma$ be
a group and let $f,f'\colon\Gamma\rightarrow G$ be functions. Then
$\defect_{\infty}\left(f'\right)\leq3d_{\infty}\left(f,f'\right)+\defect_{\infty}\left(f\right)$.

Furthermore, if $\Gamma$ is a discrete amenable group equipped with
a right-invariant measure $m$, then $\defect_{1}\left(f'\right)\leq3d_{1}\left(f,f'\right)+\defect_{1}\left(f\right)$.
\end{lem}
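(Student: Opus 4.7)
The plan is to prove both statements from the same pointwise bound on $d(f'(\gamma_1\gamma_2), f'(\gamma_1)f'(\gamma_2))$ for arbitrary $\gamma_1, \gamma_2 \in \Gamma$, by interpolating through $f(\gamma_1\gamma_2)$ and $f(\gamma_1)f(\gamma_2)$. Concretely, three applications of the triangle inequality give
\[
d(f'(\gamma_1\gamma_2), f'(\gamma_1)f'(\gamma_2)) \leq d(f'(\gamma_1\gamma_2), f(\gamma_1\gamma_2)) + d(f(\gamma_1\gamma_2), f(\gamma_1)f(\gamma_2)) + d(f(\gamma_1)f(\gamma_2), f'(\gamma_1)f'(\gamma_2)).
\]
The middle term is controlled by $\defect(f)$ (uniform or mean, as appropriate). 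For the final term, bi-invariance of $d$ allows one more triangle step through the hybrid $f'(\gamma_1)f(\gamma_2)$, yielding $d(f(\gamma_1), f'(\gamma_1)) + d(f(\gamma_2), f'(\gamma_2))$. Summing, each pair $(\gamma_1, \gamma_2)$ satisfies a bound involving three ``$f$ vs $f'$'' distances plus one defect term.

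For the uniform statement, take the supremum over $\gamma_1, \gamma_2$ and note that the three $d(f,f')$ distances are each bounded by $d_\infty(f,f')$, giving $\defect_\infty(f') \leq 3 d_\infty(f,f') + \defect_\infty(f)$. This is immediate and has no subtleties beyond the bi-invariance of $d$.

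For the mean statement, integrate the pointwise bound against $dm(\gamma_1)\,dm(\gamma_2)$. The middle term integrates to $\defect_1(f)$ by definition. The two terms of the form $\iint d(f(\gamma_i), f'(\gamma_i))\,dm(\gamma_1)\,dm(\gamma_2)$ collapse to $d_1(f,f')$ since $m$ is a probability measure. The only term requiring care is
\[
\iint d(f'(\gamma_1\gamma_2), f(\gamma_1\gamma_2))\,dm(\gamma_1)\,dm(\gamma_2).
\]
For each fixed $\gamma_2$, right-invariance of $m$ implies that the inner integral over $\gamma_1$ equals $\int d(f'(\gamma_1), f(\gamma_1))\,dm(\gamma_1) = d_1(f,f')$; integrating over $\gamma_2$ preserves this value. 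Combining, $\defect_1(f') \leq 3 d_1(f,f') + \defect_1(f)$.

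No step is a genuine obstacle; the only point that deserves attention is the invocation of right-invariance to reindex the composite argument $\gamma_1\gamma_2$, and the remark that a symmetric argument using left-invariance would cover that case as well.
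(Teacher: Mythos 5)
Your proposal is correct and follows essentially the same route as the paper: decompose $d\left(f'\left(\gamma_{1}\gamma_{2}\right),f'\left(\gamma_{1}\right)f'\left(\gamma_{2}\right)\right)$ by the triangle inequality through $f\left(\gamma_{1}\gamma_{2}\right)$, $f\left(\gamma_{1}\right)f\left(\gamma_{2}\right)$, and the hybrid $f\left(\gamma_{1}\right)f'\left(\gamma_{2}\right)$, using bi-invariance to reduce to three $d\left(f,f'\right)$ terms plus one defect term, then take a supremum or integrate. Your explicit reindexing via right-invariance to handle the $\gamma_{1}\gamma_{2}$ argument is the same mechanism the paper relies on implicitly; nothing is missing.
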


\begin{proof}
For $\gamma_{1},\gamma_{2}\in\Gamma$, we have
\begin{align*}
d\left(f'\left(\gamma_{1}\gamma_{2}\right),f'\left(\gamma_{1}\right)f'\left(\gamma_{2}\right)\right)\leq & d\left(f'\left(\gamma_{1}\gamma_{2}\right),f\left(\gamma_{1}\gamma_{2}\right)\right)+d\left(f\left(\gamma_{1}\gamma_{2}\right),f\left(\gamma_{1}\right)f\left(\gamma_{2}\right)\right) & \text{by \ensuremath{\trieq}}\\
 & +\underbrace{d\left(f\left(\gamma_{1}\right)f\left(\gamma_{2}\right),f\left(\gamma_{1}\right)f'\left(\gamma_{2}\right)\right)}_{=d\left(f\left(\gamma_{2}\right),f'\left(\gamma_{2}\right)\right)}+\underbrace{d\left(f\left(\gamma_{1}\right)f'\left(\gamma_{2}\right),f'\left(\gamma_{1}\right)f'\left(\gamma_{2}\right)\right)}_{=d\left(f\left(\gamma_{1}\right),f'\left(\gamma_{1}\right)\right)}\,\,\text{.}
\end{align*}
Hence,
\[
\sup\left\{ d\left(f'\left(\gamma_{1}\gamma_{2}\right),f'\left(\gamma_{1}\right)f'\left(\gamma_{2}\right)\right)\mid\gamma_{1},\gamma_{2}\in\Gamma\right\} \leq3d_{\infty}\left(f,f'\right)+\defect_{\infty}\left(f\right)
\]
and if $m$ is a right-invariant measure on $\Gamma$, then 
\[
\intint d\left(f'\left(\gamma_{1}\gamma_{2}\right),f'\left(\gamma_{1}\right)f'\left(\gamma_{2}\right)\right)\dm\left(\gamma_{1}\right)\dm\left(\gamma_{2}\right)\leq3d_{1}\left(f,f'\right)+\defect_{1}\left(f\right)\,\,.
\]
\end{proof}
We turn to the task of symmetrizing a given function $f\colon\Gamma\rightarrow G$.
Note that 
\begin{equation}
d\left(f\left(1_{\Gamma}\right),1_{G}\right)=d\left(f\left(1_{\Gamma}\right)f\left(1_{\Gamma}\right),f\left(1_{\Gamma}\right)\right)\leq\defect_{\infty}\left(f\right)\label{eq:symmetrization-almost-id}
\end{equation}
and thus for $\gamma\in\Gamma$,
\begin{align}
d\left(f\left(\gamma\right),\left(f\left(\gamma^{-1}\right)\right)^{-1}\right) & =d\left(f\left(\gamma\right)f\left(\gamma^{-1}\right),1_{G}\right)\nonumber \\
 & \leq d\left(f\left(\gamma\right)f\left(\gamma^{-1}\right),f\left(1_{\Gamma}\right)\right)+d\left(f\left(1_{\Gamma}\right),1_{G}\right) & \text{by \ensuremath{\trieq}}\nonumber \\
 & \leq2\defect_{\infty}\left(f\right)\,\,\text{.}\label{eq:symmetrization-almost-multiplicative}
\end{align}

The following is a natural attempt to produce a symmetric function
$f'\colon\Gamma\rightarrow G$ close to $f$.
\begin{itemize}
\item Let $\Gamma_{2}=\left\{ \gamma\in\Gamma\mid\gamma^{2}=1\right\} $
and fix a set $B$ containing exactly one of $\gamma$ and $\gamma^{-1}$
for each $\gamma\in\Gamma\setminus\Gamma_{2}$.
\item Set $f'\left(1_{\Gamma}\right)=1_{G}$.
\item For $\gamma\in B$, set $f'\left(\gamma\right)=f\left(\gamma\right)$
and $f'\left(\gamma^{-1}\right)=\left(f\left(\gamma\right)\right)^{-1}$.
\item For $\gamma\in\Gamma_{2}\setminus\left\{ 1\right\} $, let $f'\left(\gamma\right)$
be an order-two element of $G$ that is close to $f\left(\gamma\right)$.
\end{itemize}
The function $f'\colon\Gamma\to G$ is symmetric by construction,
and we would like to bound $d_{\infty}\left(f,f'\right)$ and $d_{1}\left(f,f'\right)$.
We shall see that we can obtain good bounds if we can perform the
last step efficiently, that is, if approximate square roots in $G$
are close to square roots. First, we investigate approximate square
roots in the case $G=\Sym\left(n\right)$.
\begin{lem}
\label{lem:2-torsion-Sym}Let $\sigma\in\Sym\left(n\right)$. Then,
there is $\tau\in\Sym\left(n\right)$ such that $\tau^{2}=\id$ and
$d^{H}\left(\sigma,\tau\right)=d^{H}\left(\sigma^{2},\id\right)$. 
\end{lem}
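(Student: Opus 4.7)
The plan is to give an explicit construction of $\tau$ based on the cycle decomposition of $\sigma$. Decompose $\sigma$ into its disjoint cycles. I will define $\tau$ cycle by cycle, then verify the two required conditions ($\tau^2 = \id$ and equality of Hamming distances) separately on each type of cycle.

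On each $1$-cycle $(x)$ of $\sigma$, set $\tau(x)=x$. On each $2$-cycle $(x,y)$ of $\sigma$, set $\tau(x)=y$ and $\tau(y)=x$, so $\tau$ agrees with $\sigma$ on these points. On each cycle of length $k\geq 3$, set $\tau(x)=x$ for every point $x$ in that cycle. By construction, every cycle of $\tau$ has length $1$ or $2$, so $\tau^{2}=\id$, which handles the first requirement.

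For the distance computation, I will partition $[n]$ into three pieces $A_1,A_2,A_{\geq 3}$ consisting of the points lying in cycles of $\sigma$ of length $1$, length $2$, and length $\geq 3$, respectively, and count contributions to $|\{x\mid \sigma(x)\neq\tau(x)\}|$ and $|\{x\mid\sigma^{2}(x)\neq x\}|$ on each piece. On $A_1$ both sets contribute $0$; on $A_2$, $\tau$ coincides with $\sigma$ and $\sigma^{2}$ acts as the identity, so again both contribute $0$. On a $k$-cycle with $k\geq 3$, $\sigma$ has no fixed point (so $\sigma$ disagrees with $\tau=\id$ at all $k$ points) and $\sigma^{2}$ is either another $k$-cycle (if $k$ is odd) or two cycles of length $k/2\geq 2$ (if $k$ is even), hence has no fixed points on the cycle. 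Both sets therefore receive the same contribution $k$ from this cycle.

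Summing over all cycles gives $|\{x\mid\sigma(x)\neq\tau(x)\}|=|A_{\geq 3}|=|\{x\mid \sigma^{2}(x)\neq x\}|$, and dividing by $n$ yields $d^{H}(\sigma,\tau)=d^{H}(\sigma^{2},\id)$. There is no real obstacle here; the only mild observation used is that a $k$-cycle of $\sigma$ has no fixed points under $\sigma^{2}$ precisely when $k\geq 3$, which matches our piecewise choice of $\tau$.
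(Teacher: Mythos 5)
Your construction of $\tau$ is exactly the one in the paper: the set $A=\{x:\sigma^2(x)=x\}$ used there is precisely the union of the $1$- and $2$-cycles of $\sigma$, so defining $\tau=\sigma$ on short cycles and $\tau=\id$ on long cycles is identical to the paper's $\tau=\sigma$ on $A$ and $\tau=\id$ off $A$. Your cycle-by-cycle verification is correct and amounts to the same count $d^H(\sigma,\tau)=1-|A|/n=d^H(\sigma^2,\id)$, just phrased a bit more explicitly.
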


\begin{proof}
Let $A=\left\{ x\in\left[n\right]\mid\sigma^{2}\left(x\right)=x\right\} $.
Note that the restriction $\sigma\mid_{A}$ is an involution $A\rightarrow A$.
Define 
\[
\tau\left(x\right)=\begin{cases}
\sigma\left(x\right) & x\in A\\
x & x\notin A
\end{cases}\,\,\text{.}
\]
Then $\tau^{2}={\rm id}$ and $d^{H}\left(\sigma,\tau\right)=1-\frac{|A|}{n}=d^{H}\left(\sigma^{2},\id\right)$.
\end{proof}
In other words, the lemma says that the cyclic group $C_{2}$ of order
two is stable w.r.t. $\Sym\left(n\right)$, and bounds the stability
rate.

Assume that there is a real number $M_{2}$ such that for every $g\in G$
there is $h\in G$ satisfying $h^{2}=1_{G}$ and $d\left(g,h\right)\leq M_{2}d\left(g^{2},1_{G}\right)$.
In the context of symmetrization of a function $\Gamma\rightarrow G$,
we would like $M_{2}$ to be small. Lemma \ref{lem:2-torsion-Sym}
implies that in the case $G=\Sym\left(n\right)$, we may take $M_{2}=1$.
\begin{prop}
\label{prop:symmetrization-in-general}With $M_{2}$ as above, write
$C=2\max\left\{ 1,M_{2}\right\} $. Let $\Gamma$ be a group and $f\colon\Gamma\rightarrow G$
a function. Then, there is a symmetric function $f'\colon\Gamma\rightarrow G$
such that $d_{\infty}\left(f,f'\right)\leq C\delta_{\infty}$ and
$\defect_{\infty}\left(f'\right)\leq\left(3C+1\right)\delta_{\infty}$,
where $\delta_{\infty}=\defect_{\infty}\left(f\right)$.

Furthermore, if $\Gamma$ is a discrete amenable group equipped with
a right-invariant inverse-invariant measure $m$, then for the same
function $f'$ we have $d_{1}\left(f,f'\right)\leq\left(C+1\right)\delta_{1}$
and $\defect_{1}\left(f,f'\right)\leq\left(3C+4\right)\delta_{1}$,
where $\delta_{1}=\defect_{1}\left(f\right)$.
\end{prop}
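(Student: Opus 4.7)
I will adopt the construction of $f'$ described immediately before the proposition: set $f'(1_\Gamma)=1_G$; for each $\gamma\in B$ set $f'(\gamma)=f(\gamma)$ and $f'(\gamma^{-1})=f(\gamma)^{-1}$; and for each $\gamma\in\Gamma_2\setminus\{1_\Gamma\}$ invoke the $M_2$-hypothesis to pick $f'(\gamma)\in G$ of order two with $d(f(\gamma),f'(\gamma))\leq M_2\, d(f(\gamma)^2,1_G)$. By construction $f'$ is symmetric. The local-defect bounds on $f'$ will follow at the end from the distance bounds via Lemma \ref{lem:proximity-implies-similar-local-defect}, so the real work is to bound $d_\infty(f,f')$ and $d_1(f,f')$.

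\textbf{Uniform bounds.} I will bound $d(f(\gamma),f'(\gamma))$ pointwise in four cases. At $\gamma=1_\Gamma$, inequality (\ref{eq:symmetrization-almost-id}) gives the bound $\delta_\infty$. For $\gamma\in B$ the distance is $0$. For $\gamma\in B^{-1}$, inequality (\ref{eq:symmetrization-almost-multiplicative}) applied to $\gamma^{-1}$ yields $d(f(\gamma),f(\gamma^{-1})^{-1})\leq 2\delta_\infty$. For $\gamma\in\Gamma_2\setminus\{1_\Gamma\}$ I combine the $M_2$-hypothesis with the estimate
\[
d(f(\gamma)^2,1_G)\leq d\bigl(f(\gamma)f(\gamma^{-1}),f(\gamma\gamma^{-1})\bigr)+d(f(1_\Gamma),1_G)\leq 2\delta_\infty
\]
to obtain $d(f(\gamma),f'(\gamma))\leq 2M_2\delta_\infty$. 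Taking the supremum gives $d_\infty(f,f')\leq 2\max\{1,M_2\}\delta_\infty=C\delta_\infty$, and then Lemma \ref{lem:proximity-implies-similar-local-defect} yields $\defect_\infty(f')\leq(3C+1)\delta_\infty$.

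\textbf{The $L^1$ core estimate.} The heart of the proof is the inequality
\[
\int_\Gamma d\bigl(f(\gamma),f(\gamma^{-1})^{-1}\bigr)\,dm(\gamma)\leq 2\delta_1.
\]
To prove it, for every $\alpha\in\Gamma$ I insert the pivot $f(\gamma\alpha)f(\alpha)^{-1}$ and use the triangle inequality together with bi-invariance of $d$ to get
\[
d\bigl(f(\gamma),f(\gamma^{-1})^{-1}\bigr)\leq d\bigl(f(\gamma)f(\alpha),f(\gamma\alpha)\bigr)+d\bigl(f(\gamma^{-1})f(\gamma\alpha),f(\alpha)\bigr),
\]
in which the second summand is precisely the defect of $f$ at the pair $(\gamma^{-1},\gamma\alpha)$. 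Integrating over $\alpha$ and $\gamma$, the first double integral is exactly $\delta_1$; for the second, the substitutions $\beta=\gamma\alpha$ (using left invariance, which follows from right- and inverse-invariance of $m$) and $\eta=\gamma^{-1}$ (using inverse invariance) transform it into $\int\!\!\int d(f(\eta\beta),f(\eta)f(\beta))\,dm(\beta)dm(\eta)=\delta_1$ as well.

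\textbf{Combining.} Using inverse invariance of $m$ together with the symmetry $F(\gamma):=d(f(\gamma),f(\gamma^{-1})^{-1})=F(\gamma^{-1})$ to fold $\int_{B^{-1}}F\,dm$ into $\tfrac{1}{2}\int_{\Gamma\setminus\Gamma_2}F\,dm$, and accounting for the $M_2$-factor on $\Gamma_2\setminus\{1_\Gamma\}$, I obtain
\[
d_1(f,f')\leq\max\{1,M_2\}\int_\Gamma F\,dm+m(\{1_\Gamma\})\,d(f(1_\Gamma),1_G).
\]
The first term is at most $C\delta_1$ by the core estimate. For the second, restricting the double integral defining $\delta_1$ to the slice $\gamma_2=1_\Gamma$ and invoking bi-invariance of $d$ gives $m(\{1_\Gamma\})\,d(f(1_\Gamma),1_G)\leq\delta_1$. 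Hence $d_1(f,f')\leq(C+1)\delta_1$, and Lemma \ref{lem:proximity-implies-similar-local-defect} then yields $\defect_1(f')\leq(3C+4)\delta_1$. I expect the main obstacle to be the $L^1$ core estimate: the naive pivot through $f(1_\Gamma)$ produces a ``diagonal defect'' $\int d(f(\gamma\gamma^{-1}),f(\gamma)f(\gamma^{-1}))\,dm(\gamma)$ which is not controlled by $\delta_1$ (in the finite-group case it can be as large as $|\Gamma|\delta_1$), so the asymmetric pivot $f(\gamma\alpha)f(\alpha)^{-1}$ is essential precisely because both resulting terms become honest two-variable defects of $f$.
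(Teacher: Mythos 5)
Your proof is correct and follows essentially the same route as the paper's. The construction of $f'$, the four-case uniform bound, the $L^1$ core estimate via averaging over an auxiliary group element, the handling of the $m(\{1_\Gamma\})$ term, and the final combination using Lemma \ref{lem:proximity-implies-similar-local-defect} all match the paper. The only cosmetic difference is in the core estimate: you right-translate by $f(\alpha)$ and pivot through $f(\gamma\alpha)$, whereas the paper left-translates by $f(\gamma')$ and pivots through $f(\gamma'\gamma)f(\gamma^{-1})$; these are mirror images of the same idea, and both reduce each triangle-inequality summand to an honest two-variable defect. Your closing remark correctly identifies why the naive pivot through $f(1_\Gamma)$ fails in the $L^1$ setting (it produces a one-variable ``diagonal'' defect not controlled by $\delta_1$), which is exactly the subtlety the auxiliary-variable averaging is designed to circumvent.
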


\begin{proof}
By Lemma \ref{lem:proximity-implies-similar-local-defect}, the bounds
on $\defect_{\infty}\left(f'\right)$ and $\defect_{1}\left(f'\right)$
follow from the bounds on $d_{\infty}\left(f,f'\right)$ and $d_{1}\left(f,f'\right)$,
respectively. We turn to the proof of the latter.

Let $\Gamma_{2}$ and $B$ be as in the discussion preceding Lemma
\ref{lem:2-torsion-Sym}. For each $\gamma\in\Gamma_{2}\setminus\left\{ 1_{\Gamma}\right\} $,
take $\tau\left(\gamma\right)\in G$ such that

\begin{equation}
d\left(f\left(\gamma\right),\tau\left(\gamma\right)\right)\le M_{2}d\left(\left(f\left(\gamma\right)\right)^{2},1_{G}\right)\,\,.\label{eq:symmetrization-tau-function}
\end{equation}
Define a function $f'\colon\Gamma\rightarrow G$ as follows:
\[
f'\left(\gamma\right)=\begin{cases}
1_{G} & \gamma=1_{\Gamma}\\
f\left(\gamma\right) & \gamma\in B\\
\left(f\left(\gamma^{-1}\right)\right){}^{-1} & \gamma\in\Gamma\backslash\left(\Gamma_{2}\cup B\right)\\
\tau\left(\gamma\right) & \gamma\in\Gamma_{2}\setminus\left\{ 1_{\Gamma}\right\} 
\end{cases}\,\,\text{.}
\]
Then $f'$ is symmetric by construction. By (\ref{eq:symmetrization-almost-id}),
we have
\begin{equation}
d\left(f\left(1_{\Gamma}\right),f'\left(1_{\Gamma}\right)\right)=d\left(f\left(1_{\Gamma}\right),1_{G}\right)\leq\delta_{\infty}\,\,\text{.}\label{eq:symmetrization-identitty-barely-moved}
\end{equation}
By (\ref{eq:symmetrization-almost-multiplicative}), for $\gamma\in\Gamma\setminus\left(\Gamma_{2}\cup B\right)$
we have

\[
d\left(f\left(\gamma\right),f'\left(\gamma\right)\right)=d\left(f\left(\gamma\right),\left(f\left(\gamma^{-1}\right)\right)^{-1}\right)\leq2\delta_{\infty}\,\,\text{.}
\]
Finally, for $\gamma\in\Gamma_{2}\setminus\left\{ 1_{\Gamma}\right\} $
we have
\begin{align*}
d\left(f\left(\gamma\right),f'\left(\gamma\right)\right) & =d\left(f\left(\gamma\right),\tau\left(\gamma\right)\right) & \text{}\\
 & \leq M_{2}d\left(\left(f\left(\gamma\right)\right)^{2},1_{G}\right) & \text{by (\ref{eq:symmetrization-tau-function})}\\
 & \leq M_{2}\cdot\left(d\left(\left(f\left(\gamma\right)\right)^{2},f\left(\gamma^{2}\right)\right)+d\left(f\left(1_{\Gamma}\right),1_{G}\right)\right) & \text{by \ensuremath{\trieq\ }and \ensuremath{\gamma^{2}}=\ensuremath{1_{\Gamma}}}\\
 & \leq2M_{2}\delta_{\infty}\,\,\text{.} & \text{by (\ref{eq:symmetrization-identitty-barely-moved})}
\end{align*}
This finishes the proof that $d_{\infty}\left(f',f\right)\leq C\delta_{\infty}$.

Now, assume that $m$ is a right-invariant inverse-invariant measure
on $\Gamma$. First, we show that $f$ approximately respects inverses
on average:
\begin{align}
\int d\left(f\left(\gamma\right)f\left(\gamma^{-1}\right),1_{G}\right)\dm\left(\gamma\right)= & \intint d\left(f\left(\gamma\right)f\left(\gamma^{-1}\right),1_{G}\right)\dm\left(\gamma'\right)\dm\left(\gamma\right)\nonumber \\
= & \intint d\left(f\left(\gamma'\right)f\left(\gamma\right)f\left(\gamma^{-1}\right),f\left(\gamma'\right)\right)\dm\left(\gamma'\right)\dm\left(\gamma\right)\nonumber \\
\leq & \intint d\left(f\left(\gamma'\right)f\left(\gamma\right)f\left(\gamma^{-1}\right),f\left(\gamma'\gamma\right)f\left(\gamma^{-1}\right)\right)\dm\left(\gamma'\right)\dm\left(\gamma\right) & \text{by \ensuremath{\trieq}}\nonumber \\
 & +\intint d\left(f\left(\gamma'\gamma\right)f\left(\gamma^{-1}\right),f\left(\gamma'\right)\right)\dm\left(\gamma'\right)\dm\left(\gamma\right)\nonumber \\
= & 2\delta_{1}\,\,\text{,}\label{eq:symmetrization-resepect-for-inverses-on-average}
\end{align}
where for the last equality, we manipulate the first term by cancelling
out $f\left(\gamma^{-1}\right)$, and in the second term we apply
$\gamma\mapsto\gamma^{-1}$ and then $\gamma'\mapsto\gamma'\gamma$.
Furthermore, $f$ approximately respects the identity elements in
the following sense (which is significant only when $\Gamma$ is finite): 

\begin{align}
m\left(\left\{ 1_{\Gamma}\right\} \right)d\left(f\left(1_{\Gamma}\right),1_{G}\right) & =\int_{\left\{ 1_{\Gamma}\right\} }\int d\left(f\left(1_{\Gamma}\right),1_{G}\right)\dm\left(\gamma'\right)\dm\left(\gamma\right)\nonumber \\
 & =\int_{\left\{ 1_{\Gamma}\right\} }\int d\left(f\left(\gamma'\right)f\left(1_{\Gamma}\right),f\left(\gamma'\cdot1_{\Gamma}\right)\right)\dm\left(\gamma'\right)\dm\left(\gamma\right)\nonumber \\
 & \leq\intint d\left(f\left(\gamma'\right)f\left(\gamma\right),f\left(\gamma'\gamma\right)\right)\dm\left(\gamma'\right)\dm\left(\gamma\right)\nonumber \\
 & =\delta_{1}\,\,\text{.}\label{eq:symmetrization-identity-on-average}
\end{align}
Now,
\begin{align}
\int_{\Gamma_{2}\setminus\left\{ 1_{\Gamma}\right\} }d\left(f\left(\gamma\right),\tau\left(\gamma\right)\right)\dm\left(\gamma\right) & \leq M_{2}\int_{\Gamma_{2}\setminus\left\{ 1_{\Gamma}\right\} }d\left(\left(f\left(\gamma\right)\right)^{2},\id\right)\dm\left(\gamma\right) & \text{by (\ref{eq:symmetrization-tau-function})}\nonumber \\
 & =M_{2}\int_{\Gamma_{2}\setminus\left\{ 1_{\Gamma}\right\} }d\left(f\left(\gamma\right)f\left(\gamma^{-1}\right),1_{G}\right)\dm\left(\gamma\right)\,\,\text{.} & \text{\ensuremath{\gamma=\gamma^{-1}} for \ensuremath{\gamma\in\Gamma_{2}}}\label{eq:symmetrization-average-dist-on-involutions}
\end{align}
Finally,
\begin{align*}
d_{1}\left(f,f'\right)= & \int d\left(f\left(\gamma\right),f'\left(\gamma\right)\right)\dm\left(\gamma\right)\\
= & m\left(\left\{ 1_{\Gamma}\right\} \right)d\left(f\left(1_{\Gamma}\right),1_{G}\right)+\int_{\Gamma\backslash\left(\Gamma_{2}\cup B\right)}d\left(f\left(\gamma\right)f\left(\gamma^{-1}\right),1_{G}\right)\dm\left(\gamma\right)\\
 & +\int_{\Gamma_{2}\setminus\left\{ 1_{\Gamma}\right\} }d\left(f\left(\gamma\right),\tau\left(\gamma\right)\right)\dm\left(\gamma\right)\\
\leq & \delta_{1}+\max\left\{ 1,M_{2}\right\} \int d\left(f\left(\gamma\right)f\left(\gamma^{-1}\right),1_{G}\right)\dm\left(\gamma\right) & \text{by (\ref{eq:symmetrization-identity-on-average}) and (\ref{eq:symmetrization-average-dist-on-involutions})}\\
\leq & \left(C+1\right)\delta_{1}\,\,\text{.} & \text{by (\ref{eq:symmetrization-resepect-for-inverses-on-average})}
\end{align*}
\end{proof}
\bibliographystyle{plain}
\bibliography{unistab}

\end{document}